\documentclass[12pt]{article}

\usepackage{amssymb}
\usepackage{amsmath}
\usepackage{xcolor}

\newcommand{\Set}{\mathsf{Set}}

\newcommand{\id}{\mathsf{id}}
\newcommand{\C}{\mathcal{C}}

\newcommand{\Top}{\mathsf{Top}}
\newcommand{\Haus}{\mathsf{Haus}}

\newcommand{\Tych}{\mathsf{Tych}}

\newcommand{\qed}{\mbox{}\hfill\mbox{}%
\mbox{$\square$}\medbreak}

\newcommand{\M}{{\mathcal M}}
\newcommand{\calC}{{\mathcal C}}

\newcommand{\calP}{{\mathcal P}}
\newcommand{\calA}{{\mathcal A}}

\newcommand{\calS}{{\mathcal S}}
\newcommand{\calQ}{{\mathcal Q}}

\newtheorem{theorem}{Theorem}[section]
\newtheorem{definition}[theorem]{Definition}
\newtheorem{proposition}[theorem]{Proposition}
\newtheorem{lemma}[theorem]{Lemma}
\newtheorem{remark}[theorem]{Remark}
\newtheorem{corollary}[theorem]{Corollary}
\newtheorem{example}[theorem]{Example}

\title{Smallness in Topology}
\author{Ji$\check{\text{r}}$\'{i} Ad\'{a}mek\footnote{ The first- and third-named authors acknowledge the support by the Grant Agency of the Czech Republic under the grant 22-02964S.},\; Miroslav Hu$\check{\text{s}}$ek,\, Ji$\check{\text{r}}$\'{i} Rosick\'{y}$^*$\!, Walter Tholen\footnote{The fourth-named author acknowledges partial financial assistance by the Natural Sciences and Engineering Council of Canada under the Discovery Grants Program, no. 501260.}}
\date{
{\em In memory of our friend Horst Herrlich}}
 
\begin{document}
\maketitle

\begin{abstract}

Quillen's notion of small object and the Gabriel-Ulmer notion of finitely presentable or generated object are fundamental in homotopy theory and categorical algebra. Do these notions always lead to rather uninteresting classes of objects in categories of topological spaces, such as all finite discrete spaces, or just the empty space, as the examples and remarks in the existing literature may suggest?
 
This article demonstrates that the establishment of full characterizations of these notions (and some natural variations thereof) in many familiar categories of spaces can be quite challenging and may lead to unexpected surprises. In fact, we show that there are significant differences in this regard even amongst the categories defined by the standard separation axioms, with the T$_1$-separation condition standing out. The findings about these specific categories lead us to insights also when considering rather arbitrary full reflective subcategories of the category all topological spaces.

\end{abstract}

\medskip
{\it Keywords:} finitely presentable object, finitely generated object, finitely small object, directed colimit, Hausdorff space, T$_0$-space, T$_1$-space, compact space.

{\it Mathematics Subject Classification:} 54B99, 18B30, 54D10.

\section{\small Introduction}

Small objects in a category $\C$ were introduced in homotopy theory
by Quillen \cite{Q}. Given a class $\mathcal M$ of morphisms,
an object $X$ is \emph{finitely small} (aka $\aleph_0$-small) w.r.t. $\mathcal M$, if its
hom-functor $\mathcal C(X,-):\mathcal C\to\Set$ preserves colimits of
\emph{continuous chains} of
morphisms from $\mathcal M$. Continuity means that the chain $D$ is indexed by a limit
ordinal $\alpha$ (the linearly ordered set of all ordinals $i< \alpha$), and
 for every limit ordinal $j< \alpha$ the step $j$ of the chain is given by the
colimit of the subchain indexed by all $i<j$; in other words, the given functor $D:  \alpha \to \C$ preserves colimits. More generally, $X$ is
\emph{$\lambda$-small} w.r.t. $\M$ if the hom-functor
preserves the above colimits for all ordinals of cofinality al least $\lambda$.

Shortly afterwards Gabriel and Ulmer \cite{GU1971} introduced the concept
of a \emph {finitely presentable} and \emph {finitely generated} object:
here the hom-functor is required to preserve all directed colimits,
or directed colimits of monomorphisms, respectively. Directed colimits can be
reduced to colimits of continuous chains in general, see Theorem \ref{smooth}.
Thus finitely presentable objects are precisely the finitely small ones
if $\M$ consists of all morphisms. What about other classes $\M$?
In the present paper we study (finite) smallness in the category $\Top$
and its subcategories, {\it e.g.} in the categories $\Haus$ of Hausdorff spaces, $\Top_1$ of T$_1$-spaces, and $\Top_0$ of T$_0$-spaces.
The concept of finitely generated or finitely small space then depends
fundamentally on the category in question.

Besides the class $\M$ of all
monomorphisms (injective continuous maps), we consider the classes of all
subspace embeddings, and of all open embeddings. In a general category we
call an object \emph{finitely generated w.r.t. $\M$} if its hom-functor
preserves colimits of directed diagrams with connecting maps
in $\M$.

We shall see that
for almost all of our results, with one surprising exception, there is
no difference between the use of directed colimits and colimits of continuous chains.
In fact, if $\M$ consists of all monomorphisms, or of all open embeddings,
directed colimits can always be reduced to colimits of chains (Corollary \ref{C:smooth}).
So, the only case where finite smallness may differ from finite generation
(for the classes we study) is that of embeddings.

We  also look at full subcategories of $\Top$, especially at some of its
prominent reflective subcategories that were categorically presented by
Horst Herrlich in his early land-mark monograph \cite{Herrlich1968}. We
characterize their finitely generated and finitely small objects.
Whereas Sections 7 and 8 deal with rather general full subcategories, 
we first concentrate on $\Haus$, $\Top_1$ and $\Top_0$. Let us mention
these results in greater detail.

\subsection {{\small Small topological spaces}}

It is well known that the finitely generated objects of the
category $\Top$ of topological spaces are precisely the finite discrete
spaces, and that they are all even finitely presentable. The original proof due to Gabriel and Ulmer  \cite{GU1971}, 6.4,  has a shortcoming
---the only mathematical error in their monograph of which the authors of this paper are aware. Fortunately though, as we show in detail in Section 2, their argumentation may be amended to yield a correct proof, in such a way that the elegance and generality of its original does not get compromised. 

An apparent, but known to be faulty, simplification of the argumentation given in  \cite{GU1971} appeared as Example 1.2(10) of the book \cite{AR1994}. Also the argument given at the beginning of Section 2.4 of Hovey's monograph \cite{Hovey1999}, for the much narrower claim that a non-discrete two-point space fails to be finitely presentable, is known to be faulty; see the relevant entry \cite{nLab} in the nLab and the discussion blog \cite{mathoverflow}.

We characterize finitely generated objects w.r.t. open embeddings as precisely the compact
spaces, and finitely presentable objects w.r.t. embeddings as precisely the
finite spaces (Theorem \ref{T}) .

Using continuous chains or directed diagrams makes no difference for all of the above
results in $\Top$.

\subsection {{\small Small Hausdorff and Tychonoff spaces}}

Let us consider smallness in $\Haus$ and $\Tych$, the full subcategories of
$\Top$ of Hausdorff and of Tychonoff spaces, respectively. Of great
assistance is Herrlich's example \cite{Herrlich1969} of a non-Hausdorff
space that is the union of an $\omega$-chain of closed Tychonoff subspaces.
It is  beautifully crafted in a minimalistic
fashion that is typical for many of Herrlich's works, and we present it with a detailed proof in \ref{Herrlichex} and \ref{proof}. This example allows us
to conclude that, in $\Haus$ and $\Tych$, the empty space is the only object which is finitely
generated w.r.t. closed embeddings (Corollary \ref{T0fp}). By contrast, the
objects finitely generated w.r.t. open embeddings are again the compact spaces
(Corollary \ref{C:compact}).

\subsection {{\small Small T$_1$-spaces}}
Concerning smallness, the category $\Top_1$ is the most colorful one of the full
subcategories of $\Top$ we consider. On the one hand, no non-empty
space is finitely presentable (Theorem \ref{T0fp}). To prove this
we give an example of a directed collection of
quotients of the unit interval whose colimit in $\Top$ is a two-point
indiscrete space. (A more complicated example of this sort is due to Dugundji
\cite{Dugundij1966}, p. 422.)

On the other hand, the finitely generated objects of $\Top_1$ are the finite (and, hence, discrete)
spaces (Theorem \ref{T1fg}), and the finitely generated spaces
w.r.t. open embeddings are precisely the compact spaces
(Corollary \ref{C:compact}). Once again, directed colimits
play the same role as colimits of continuous chains in all of these four
results.

By contrast, for the class $\M$ of all embeddings we get two different
smallness results (see Theorem \ref{T1emb}): A space in the category $\Top_1$ is

\begin {itemize}
\item[$\mathrm{(1)}$] Finitely generated w.r.t. embeddings if, and only if, it is
finite.
\item[$\mathrm{(2)}$] Finitely small w.r.t. embeddings if, and only if, it is
countable and compact.
\end {itemize}
This makes $\Top_1$ the unique exception mentioned
earlier.

\subsection {{\small Small T$_0$-spaces}}

It turns out that a characterization of finitely presentable objects and
finitely generated ones (in the variants we consider) does not get easier
at all in categories of spaces with low separation conditions.
In \ref{T0example} we present an example of an $\omega$-chain of monomorphisms between T$_0$-spaces
whose colimit in $\Top$ nevertheless has an indiscrete two-point subspace. Using this example, we
are able to show that the empty space is the only finitely generated object
in $\Top_0$ (Theorem \ref{T0fg}).

We also prove that finite T$_0$-spaces are precisely the objects finitely
generated w.r.t. embeddings. The proof is not easy, it uses substantially
Ramsey's Theorem.

\medskip

Here is a quick summary  of the main results proved in this paper:

\begin{table}[h!]
	\begin{tabular}{c|c|c|c|c|c}
		cat. & fin.\ pres. & fin.\ gener. & f.g. emb. & f. small emb. & f.g.op.emb.\\ \hline
		$\Top_{\phantom{0}}$ & finite discr. & finite discr. & finite & finite & compact\\ \hline
		$\Top_0$ \label{key}& empty & empty & finite  & finite & compact\\ \hline
		$\Top_1$ & empty & finite discr.& finite discr.& ctbl., comp. &  compact\\ \hline
		$\Haus$ & empty & empty & empty & empty & compact
	\end{tabular}
\end{table}

\section{\small The corrected Gabriel-Ulmer proof}

We now give a detailed proof that finitely presentable (or finitely generated) topological spaces are 
precisely the finite discrete spaces. In the next section we prove that \emph{all} finite topological spaces are
finitely generated w.r.t. embeddings. We recall the relevant definitions. A directed poset is a poset with upper bounds for all finite subsets.

\begin{definition}	
	\rm	
	(1) An object $X$ of a category $\C$ is {\it finitely presentable} if, and only if, its hom-functor 
	preserves directed colimits. In detail: let $D=(Z_i)_{i \in I}$ be a directed diagram  (indexed by a directed poset $I$) with a colimit
	$c_i: Z_i \to Z$ in $\C$. Then every morphism $f:X \to Z$ factorizes, for some $i \in I$, \emph{essentially uniquely} through $c_i$:
\begin{itemize}	
\item[(i)] there is a morphism $g: X \to Z_i$ with $f=c_i \cdot g$, and 
	
\item[(ii)] if $g': X \to Z_i$ also fulfills $f=c_i \cdot g'$, then some connecting morphism $z_{i,j}: Z_i \to Z_j$ 
	merges $g$ and $g'$.
	\end{itemize}

\indent{(2)} An object $X$ is {\em finitely generated} if the above is only required to hold for directed diagrams of monomorphisms, {\em i.e.}, for diagrams whose connecting morphisms of $D$ are monic.	

(3) Given a class $\mathcal M$ of monomorphisms, $X$ is {\em finitely generated w.r.t. $\mathcal {M}$} (or {\em finitely small w.r.t. $\mathcal M$}) if the above is only required to hold for directed diagrams  of monomorphisms from  $\mathcal M$ (or just continuous chains of monomorphisms from  $\mathcal M$, resp.).

\end{definition}

\begin{remark}
\rm
	Condition (ii) can be strengthened as follows:
	
	\begin{itemize}
	\item[(iii)] For every $k \geq i$ and every pair $h,h': X \to Z_k$ of morphisms
	with $f=c_k \cdot h= c_k\cdot h'$, some connecting morphism $z_{k,j}: Z_k \to Z_j$ merges $h$ and $h'$.
	\end{itemize}
	
	Indeed, the directed subdiagram $D_k$ of $D$ indexed be the upper set of $k$
	has the same colimit $Z$ with the colimit cocone of all $c_l$ where $l \geq k$. For each $l$
	we have $c_l = c_k \cdot z_{k,l}$ which implies the equality
	$$c_l \cdot (z_{k,l} \cdot h)= c_l \cdot (z_{k,l} \cdot h').$$
	Applying (i) and (ii) to $D_k$, we conclude that there exists $l\geq k$ such that
	the last  equality implies that some connecting morphism $z_{l,j}$ merges
	$z_{k,l} \cdot h$ and $z_{k,l} \cdot h'$. The desired connecting map is $z_{k,j}$: since
	$z_{k,j} = z_{l,j} \cdot z_{k,l}$ we get
		$$z_{k,j} \cdot h = z_{l,j} \cdot z_{k,l} \cdot h =z_{l,j} \cdot z_{k,l} \cdot h' = z_{k,j} \cdot h'.$$

\end{remark}
\begin{proposition}\label{fpTop}
The finitely generated spaces in the category $\Top$ are precisely the finite discrete spaces, and they are all finitely presentable.

\end{proposition}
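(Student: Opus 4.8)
The plan is to sandwich the three notions by proving two implications: that finite discrete spaces are finitely presentable (the easy direction), and that finitely generated spaces are finite discrete (the hard direction); since finite presentability implies finite generation, this forces all three classes to coincide.

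\emph{Sufficiency.} First I would check that the one-point space $\ONE$ is finitely presentable: its hom-functor $\Top(\ONE,-)$ is naturally isomorphic to the underlying-set functor $U\colon\Top\to\Set$, and $U$ preserves all colimits because it is a left adjoint (its right adjoint is the indiscrete-space functor). Since finite products of functors preserving directed colimits preserve them too (directed colimits commute with finite limits in $\Set$), finite coproducts of finitely presentable objects are finitely presentable; as the $n$-point discrete space is $\coprod_{k<n}\ONE$, it is finitely presentable, hence a fortiori finitely generated. I would also record a simplification that streamlines the converse: for a directed diagram of \emph{monomorphisms} the essential-uniqueness clause is automatic, since two lifts $g,g'\colon X\to Z_i$ of $f$ satisfy $U c_i\cdot Ug=Uc_i\cdot Ug'$ with $Uc_i$ injective, whence $Ug=Ug'$ and $g=g'$. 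Thus finite generation reduces to the bare \emph{existence} of a factorization.

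\emph{Necessity, discreteness.} Suppose $X$ is not discrete, so some singleton $\{x_0\}$ fails to be open. The strategy is to produce an infinite space $Z$ possessing two topologically indistinguishable points $u,v$ (every open set containing one contains the other), together with a presentation of $Z$ as the colimit of a directed diagram of monomorphisms $(Z_i)$ in which $u$ and $v$ are \emph{separated at every stage} by an open set containing exactly one of them. Granting this, the map $f\colon X\to Z$ sending $x_0\mapsto u$ and every other point to $v$ is continuous (all preimages of opens are $\emptyset$ or $X$, by indistinguishability), yet any factorization $f=c_i\cdot g$ would force the lift $g$ to pull a stage-wise separating open back to $f^{-1}=\{x_0\}$, which is not open in $X$ — a contradiction. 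Hence $f$ factors through no stage and $X$ is not finitely generated. I expect the explicit construction of this \emph{separation-degrading} colimit to be the main obstacle: every naive attempt (for instance fixing the underlying set and coarsening the topology, or taking subspace embeddings) fails, because isolated auxiliary points always supply an "escape" neighbourhood that survives into the colimit and re-separates $u$ from $v$. Overcoming this is exactly the delicate point where the original Gabriel–Ulmer argument needs repair, and it is the same inseparability phenomenon that powers the examples announced for $\Top_1$ and $\Top_0$; in particular it must be strong enough to exclude even finite non-discrete spaces such as the Sierpi\'nski space.

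\emph{Necessity, finiteness.} Once $X$ is known to be discrete, say $X=D(S)$, I would write $D(S)$ as the colimit of the directed diagram of its finite discrete subspaces with the subspace embeddings, which are monomorphisms; since $U$ preserves this colimit, $\id_{D(S)}$ can factor through a finite stage only if that stage already surjects onto $D(S)$, which is impossible when $S$ is infinite. Therefore a finitely generated discrete space is finite, completing the characterization. As a warm-up that is independent of the hard step, the directed diagram on the fixed underlying set $S$ with topologies $\tau\vee\mathrm{disc}(S\setminus F)$ indexed by the finite subsets $F\subseteq S$ has colimit $X$, and factoring $\id_X$ through it shows directly that a finitely generated space has only finitely many non-isolated points; the residual task is precisely to eliminate those finitely many exceptional points, which is what the degrading colimit of the previous paragraph is designed to do.
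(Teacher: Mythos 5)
Your sufficiency argument is correct: $\Top(\ONE,-)\cong U$ preserves all colimits since $U$ is a left adjoint, finite products of functors preserving directed colimits preserve them (directed colimits commute with finite limits in $\Set$), so finite discrete spaces are finitely presentable; and the observation that monic colimit injections make essential uniqueness automatic is also right. Likewise, your ``warm-up'' diagram with topologies $\tau\vee\mathrm{disc}(S\setminus F)$ does show that a finitely generated space has only finitely many non-isolated points, and the finiteness step for discrete $X$ is fine. These pieces are sound, and in places take a cleaner, more categorical route than the paper, which proves finiteness \emph{without} first knowing discreteness, by taking the directed system of finite \emph{indiscrete} subspaces of $X$, whose colimit in $\Top$ is the indiscrete space on the set $X$.

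However, there is a genuine gap, and you have located it yourself: the ``separation-degrading'' colimit---an infinite space with two topologically indistinguishable points, presented as a directed colimit of monomorphisms in which the two points are separated at every stage---is never constructed; you only postulate its existence and remark that every naive attempt fails. That construction is not a routine missing detail: it is precisely the mathematical core of the proposition, the point at which the original Gabriel--Ulmer argument went wrong and which the paper repairs. The paper's fix in the proof of Proposition \ref{fpTop} is different from your plan: given a non-open $S\subseteq X$, it builds spaces $Y_F$ (for finite $F\subseteq\omega$) on the set $X+\omega$ whose nontrivial open sets $U$ must satisfy $U\cap X=S$, $U\cap F=\emptyset$, and $U\cap\omega$ a tail of $\omega$; the last two conditions become jointly unsatisfiable over all $F$, so the colimit is indiscrete, while every stage has an open set whose trace on $X$ is the non-open set $S$, so the inclusion $X\to Y$ factors through no stage. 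Note that this keeps $X$ itself inside the colimit rather than collapsing it onto two points, which sidesteps a second, smaller flaw in your sketch: with your map $x_0\mapsto u$, rest $\mapsto v$, a stage-wise open containing ``exactly one'' of $u_i,v_i$ does not suffice, for if it contains $v_i$ but not $u_i$ the preimage is $X\setminus\{x_0\}$, which may be open (e.g.\ in the Sierpi\'{n}ski space); you need an open containing $u_i$ and not $v_i$ at \emph{every} stage. For the record, objects of the kind you postulate do exist: the paper's Example \ref{T0example} (an $\omega$-chain of T$_0$-topologies on a fixed carrier connected by identity maps, whose colimit has an indiscrete doubleton) is exactly one, and it is built by fixing the underlying set and coarsening the topology---the very route you dismissed as hopeless. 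Producing such an example is the nontrivial content your proposal leaves out.
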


\begin{proof} Quite trivially, if $X\in\Top$ is finite and discrete, $X$ is finitely presentable. Indeed, given a map $f:X\to Z\cong\mathrm{colim}_{i\in I}Z_i$ into the colimit of a directed system of spaces $Z_i$, each of the finitely many elements $f(x), x\in X,$ lies in the image of $Z_i\to Z$ for some $i\in I$ and, hence, for a common $i\in I$. The resulting factorization of $f$ through $Z_i$ is essentially unique already at the level of sets.

Equally trivially one sees that a finitely generated space $X$ must be finite, as follows. For the filtered system of its finite subsets $E$ considered as {\em indiscrete} spaces, connected by inclusion maps, their colimit $Z$ in $\Top$ may be taken to have the same underlying set as the space $X$, but with open sets all those $V\subseteq X$ for which $V\cap E =\emptyset$ or $E\subseteq V$, for every finite subset $E\subseteq X$. But since for any non-empty and proper subset $W\subseteq X$ one would find a two-point set $D\subseteq X$ with $W\cap D\neq\emptyset$ and $ D\nsubseteq W$, the topology of $Z$ is indiscrete. That makes the identity map $X\to Z$ continuous and, hence, factorize through a finite subset of $X$.

Less trivially, in order to show that $X$ must be discrete, let us assume that we had a non-open subset $S$ in $X$. For the ordered set $\omega$ of natural numbers and every finite subset $F\subseteq \omega$, we form the topological space $Y_F$; its underlying set is the disjoint union $X+\omega$, and its open sets are, other than $\emptyset$ and $X+\omega$,  those subsets $U$ of $X+\omega$ which meet $X$ in precisely $S$, and meet $\omega$ in some non-empty end of $\omega$ disjoint from $F$; that is:
$$ \text{1. } U\cap X=S,\;\; \text{ 2. } U\cap F=\emptyset,\;\;\text{ 3. } U\cap\omega=\{n\in\omega\mid n\geq m\} \text{ for some }m\in \omega.$$
This defines a topology on $X+\omega$, and as above one has that the colimit $Y=\mathrm{colim}_{F}Y_F$ may be taken to have underlying set $X+\omega$ provided with the indiscrete topology; indeed, in order to satisfy condition 2. for all $F$, a non-trivial open set $U$ in $Y$ would be forced to lie in $X$, in contradiction to condition 3.
Consequently, the identity map $X\to Y$ is continuous, but it cannot factor continuously through any space $Y_F$, since $Y_F$ contains an open set of the form $S+\{n\mid n\geq m\}$, the intersection of which with $X$ fails to be open---a contradiction.
\end{proof}
\qed

\begin{remark}
\rm
The non-trivial part of the above proof follows the argumentation given in \cite{GU1971}, page 65, with one essential difference: when we transcribe the Gabriel-Ulmer proof into the above context and notation, then it misses condition 3 for the non-trivial open sets in  $Y_F$. But this omission would allow the set $S$ to be open in every space $Y_F$ and, hence, in their colimit $Y$, which would make the remainder of the argumentation collapse. 
\end{remark}

In the proof of the Proposition, let us now replace the countable set $\omega$ by any infinite regular cardinal $\lambda$, provided with its order as an ordinal number, and let us read 
`finite' as `of cardinality less than $\lambda$'. Then the proof remains intact, and we have indeed verified the original Gabriel-Ulmer claim:

\begin{theorem}\label{Topthm}
For every infinite regular cardinal $\lambda$, the $\lambda$-generated objects in $\Top$ are precisely the discrete spaces of cardinality less than $\lambda$, and they are all $\lambda$-presentable.
\end{theorem}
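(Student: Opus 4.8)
The plan is to rerun, almost verbatim, the three steps of the proof of Proposition~\ref{fpTop}, systematically replacing $\omega$ by $\lambda$, the word ``finite'' by ``of cardinality less than $\lambda$'', and directed colimits by $\lambda$-directed ones (colimits of diagrams whose index poset admits upper bounds for all subsets of cardinality less than $\lambda$). Since every $\lambda$-presentable space is in particular $\lambda$-generated, it suffices to establish two implications. First, a discrete space $X$ with $|X|<\lambda$ is $\lambda$-presentable: given $f:X\to Z\cong\mathrm{colim}_{i\in I}Z_i$ into a $\lambda$-directed colimit, each of the fewer than $\lambda$ elements $f(x)$ is hit by some $c_i:Z_i\to Z$ and hence, by $\lambda$-directedness, by a common one; since the forgetful functor $\Top\to\Set$ preserves these colimits, both the factorization and its essential uniqueness are already settled at the level of underlying sets.

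For the converse I would show that a $\lambda$-generated space $X$ is discrete with $|X|<\lambda$. The cardinality bound comes from the ``equally trivial'' step of Proposition~\ref{fpTop}, now applied to the poset of subsets $E\subseteq X$ of cardinality less than $\lambda$, each viewed as an indiscrete space and ordered by inclusion. One first checks that this poset is $\lambda$-directed, which amounts to the fact that a union of fewer than $\lambda$ sets, each of cardinality less than $\lambda$, is again of cardinality less than $\lambda$; this is exactly where the regularity of $\lambda$ enters. The colimit $Z$ in $\Top$ is then indiscrete by the same two-point argument, so continuity of the identity map $X\to Z$ forces it to factor through some such $E$, whence $|X|<\lambda$.

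Discreteness is the substantive part, and it is where I expect the only genuine subtlety to lie. Assuming a non-open $S\subseteq X$, I would form, for each $F\subseteq\lambda$ of cardinality less than $\lambda$, the space $Y_F$ on $X+\lambda$ defined by conditions 1--3 verbatim, the end in condition 3 now being read as $\{n\in\lambda\mid n\geq m\}$ for some $m\in\lambda$. The main obstacle --- and the one new point relative to the case $\lambda=\omega$ --- is the verification that a disjoint end exists for every index $F$ and that the indexing poset is $\lambda$-directed. Both reduce to the boundedness in $\lambda$ of any subset of cardinality less than $\lambda$, which is again precisely the regularity of $\lambda$; were $\lambda$ singular, a small set could be cofinal, no end would be disjoint from it, and the construction would collapse.

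Granting regularity, the argument closes exactly as before: the colimit $Y$ in $\Top$ is indiscrete, since for every nontrivial open $U$ the requirement $U\cap F=\emptyset$ for all admissible $F$ forces $U\subseteq X$, contradicting condition 3. Hence the inclusion $X\to Y$ is continuous, yet it cannot factor through any $Y_F$, because each $Y_F$ carries an open set of the form $S+\{n\mid n\geq m\}$ whose trace on $X$ equals $S$, which is not open in $X$; a factorization would make this trace open. This contradiction proves $X$ discrete and, together with the first implication, identifies the $\lambda$-generated spaces with the discrete spaces of cardinality less than $\lambda$, all of which are $\lambda$-presentable.
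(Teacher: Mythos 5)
Your proposal is correct and follows exactly the paper's route: the paper proves Theorem~\ref{Topthm} by declaring that the proof of Proposition~\ref{fpTop} ``remains intact'' after replacing $\omega$ by $\lambda$ and ``finite'' by ``of cardinality less than $\lambda$'', which is precisely what you carry out. Your additional care in isolating where regularity of $\lambda$ is needed (the $\lambda$-directedness of the index posets and the existence of ends of $\lambda$ disjoint from small $F$) is a useful explicit verification of what the paper leaves implicit, but it is the same argument.
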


\section{\small Smallness in $\Top$}

We characterize smallness w.r.t. embeddings and open embeddings for objects in the category $\Top$
of topological spaces. We also present some results about full subcategories of $\Top$.

Let us start with a theorem showing that directed colimits and continuous chains are often equally `strong'
in every category $\mathcal C$ with directed colimits.

Let $\mathcal M$ be a class of morphisms containing all
isomorphisms and closed under composition. Then $\mathcal M$ can be considered
as a generally non-full subcategory of $\mathcal C$ with the same objects.
To say that the subcategory $\M$ is {\it closed under directed colimits} in $\C$ means
  that, for every directed diagram $D=(Z_i)_{i\in I}$ with all connecting morphisms in $\M$, the colimit cocone $c_i:Z_i\to {\rm colim}\,D \;(i\in I)$ in $\C$ has the following properties.
  \begin{itemize}
  	\item[(1)] All morphisms $c_i$ lie in $\M$.	
  	\item[(2)] Given any cocone $f_i:Z_i\to A\;(i\in I)$ of morphisms in $\M$, the unique factorization morphism $\mathrm{colim}\,D\to A$ lies in $\M$, too. 
  \end{itemize}

Analogously for $\M$ being closed under colimits of continuous chains.

\begin{theorem}\label{smooth} Let $\mathcal C$ be a category with directed colimits, and   $\mathcal M$ a class of morphisms that is closed under composition and contains all isomorphisms. Then,
if $\mathcal M$ is closed under colimits of continuous chains, $\M$ is closed under all directed colimits.
	
\end{theorem}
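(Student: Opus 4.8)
The plan is to prove the statement by transfinite induction on the cardinality $\kappa=|I|$ of the indexing directed poset, reducing an arbitrary directed colimit to colimits of continuous chains so that the hypothesis on $\M$ can be applied. For a directed subset $J\subseteq I$ write $Z_J=\mathrm{colim}_{i\in J}Z_i$ for the colimit of the restricted diagram (it exists since $\calC$ has directed colimits), with colimit cocone $d^J_i:Z_i\to Z_J$; for $J=I$ this recovers the given $Z=\mathrm{colim}\,D$ with cocone $c_i$. The base case is trivial: a finite directed poset has a greatest element $m$, so $Z_I\cong Z_m$, the cocone map $c_i$ is the connecting morphism $z_{i,m}\in\M$, and the factorization of any $\M$-cocone $(f_i)$ is $f_m\in\M$; thus (1) and (2) hold.

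For the inductive step assume $\kappa=|I|$ infinite and the statement established for all directed posets of cardinality $<\kappa$. First I would invoke the standard set-theoretic fact that an infinite directed poset is the union of a continuous increasing chain $(I_\alpha)_{\alpha<\lambda}$ of directed subposets with $|I_\alpha|<\kappa$, where $\lambda=\kappa$ if $\kappa$ is regular and $\lambda=\mathrm{cf}(\kappa)$ if $\kappa$ is singular; here continuity means $I_\alpha=\bigcup_{\beta<\alpha}I_\beta$ at limit $\alpha$, and one builds the $I_\alpha$ by repeatedly taking directed closures, which keeps cardinalities controlled. Passing to colimits, the objects $Z_{I_\alpha}$ form a chain, and continuity of the filtration gives $Z_{I_\alpha}=\mathrm{colim}_{\beta<\alpha}Z_{I_\beta}$ at limit stages (a colimit over a directed union is the colimit of the partial colimits); hence $(Z_{I_\alpha})_{\alpha<\lambda}$ is a continuous chain with colimit $Z_I=Z$.

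The crucial point is that every connecting morphism $w_{\alpha,\beta}:Z_{I_\alpha}\to Z_{I_\beta}$ $(\alpha\le\beta<\lambda)$ of this chain lies in $\M$. Indeed, both indices are $<\kappa$, so the inductive hypothesis applies to $I_\alpha$ and to $I_\beta$: by property (1) for $I_\beta$ the cocone $(d^{I_\beta}_i)_{i\in I_\beta}$ has all components in $\M$, and restricting it to $i\in I_\alpha$ yields an $\M$-cocone on the $I_\alpha$-diagram whose factorization through $Z_{I_\alpha}$ is exactly $w_{\alpha,\beta}$; by property (2) for $I_\alpha$ this factorization lies in $\M$. Thus $(Z_{I_\alpha})_{\alpha<\lambda}$ is a continuous chain with all connecting morphisms in $\M$, and the hypothesis that $\M$ is closed under colimits of continuous chains yields (1) and (2) for its colimit $Z$. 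It remains to upgrade these to the original diagram: for (1), any $i\in I$ lies in some $I_\alpha$ and $c_i=(Z_{I_\alpha}\to Z)\cdot d^{I_\alpha}_i$ is a composite of $\M$-morphisms (the first by the chain conclusion, the second by the inductive hypothesis for $I_\alpha$), hence in $\M$; for (2), an $\M$-cocone $(f_i)_{i\in I}$ restricts on each $I_\alpha$ to factor through $Z_{I_\alpha}$ by some $g_\alpha\in\M$ (inductive hypothesis (2)), the $g_\alpha$ form an $\M$-cocone on the chain whose factorization through $Z$ lies in $\M$ by the chain conclusion (2), and by uniqueness this is the required comparison morphism $Z\to A$.

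I expect the main obstacle to be twofold. First, producing the continuous filtration of $I$ by directed subposets of \emph{strictly smaller} cardinality, where the singular case forces one to index the chain by $\mathrm{cf}(\kappa)$ rather than $\kappa$ and to control cardinalities under directed closure. Second, and more conceptually, the recognition that property (2) of the inductive hypothesis — the factorization half of the closure condition — is precisely what forces the chain's connecting morphisms $w_{\alpha,\beta}$ into $\M$; without it the induction would not get off the ground, since the $w_{\alpha,\beta}$ are comparison maps between sub-colimits rather than original connecting morphisms.
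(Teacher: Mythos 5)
Your proposal is correct and follows essentially the same route as the paper's proof: induction on the cardinality of the index poset, a continuous filtration of $I$ by smaller directed subposets (the paper cites \cite{AR1994}, Lemma 1.6 for this), the observation that the comparison maps between partial colimits lie in $\M$ by combining both halves (1) and (2) of the inductive hypothesis, and then an application of the chain-closure hypothesis followed by composition. The only differences are cosmetic: you spell out the regular/singular cardinal distinction in the filtration, while the paper leaves that to the cited lemma, and the paper verifies explicitly (via joint epicness of colimit cocones) that the factorizations $h_k$ form a cocone, a detail you state without proof.
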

\begin{proof} 
By \cite{AR1994}, Lemma 1.6, for every directed poset of cardinality $\lambda$ there exists a continuous chain
	$I_k \subseteq I$ of directed subposets (for $k<\lambda$) of cardinalities
	$\mid I_k |<\lambda$ whose union is all of $I$; continuity means that, for every limit ordinal $k<\lambda$, 
	the poset $I_k$ is the union of all $I_j$ for $j<k$.

	Given a directed diagram $D$ of morphisms in $\M$ with a colimit
	$c_i: Z_i \to Z\; (i \in I)$, we prove properties (1) and (2) above. 
	We proceed by induction on the cardinality $\lambda$ of $I$.
	
	If $I$ is finite, then $I$ has a largest element, and the statement is trivial.
	
	Otherwise, assuming the statement holds for all diagrams of
	cardinalities smaller than $\lambda$, we prove that it holds for $D$.  Let $I_k\subseteq I$ be a continuous chain as above.
	
	For each $k<\lambda$ we form the subdiagram $D_k =(Z_i)_{i\in I_k}$
	of $D$ and its colimit
	$$c_{i,k}: Z_i \to Q_k$$
	for $i\in I_k$. By induction hypothesis, these morphisms lie in $\M$. Moreover, given $k<l<\lambda$ we have
	the unique factorization $q_{k,l}: Q_k \to Q_l$ with
	$$c_{i,l}=q_{k,l} \cdot c_{i.k}$$
	for $i\in I_k$, and again by induction hypothesis, it lies in $\M$. We obtain a chain of  objects $Q_k$ and morphisms
	 $q_{k,l}: Q_k \to Q_l (k\leq l \leq \lambda)$. This chain is continuous because of the continuity of the collection of all $I_k$.

	We further have, for each $k<\lambda$,
	the unique morphism $q_k: Q_k \to Z$ with
	$$c_i=q_k \cdot c_{i,k}$$
	for $i \in I_k$. These morphisms clearly form the colimit of the chain $Q_k (k<\lambda)$. Thus they lie in $\M$, following the closure of this class under colimits of continuous chains.
	This proves item (1) above: $c_i$ lies in $\M$ since this class is closed under composition.
	
We now verify item (2). Every cocone $f_i: Z_i \to X$ of $D$ consisting of members of $\mathcal M$
yields cocones of the subdiagrams $D_k$, and the unique factorizations $h_k:Q_k \to X$ with
$$h_k \cdot c_{i,k} = f_i \hspace{3mm} (i<k)$$
lie in $\M$ by induction hypothesis.
These morphisms (for all $k<\lambda$)
form a cocone of the chain of all $Q_k$. Indeed, given $l<k<\lambda$, the equality $h_k = h_l \cdot q_{k,l}$ follows since the cocone $c_{i,k} (i<k)$ is collectively epic, and we have
$$ h_k \cdot c_{i,k} =f_i= f_l \cdot c_{i,l}= h_l \cdot q_k \cdot c_{i,k}. $$
Thus we have a unique morphism $h :Z\to X$ with $h \cdot q_k = h_k$ for all $k<\lambda$.   Since $\M$ is closed under colimits of contuinuous chaions, $h$ lies in $\M$.
This is the desired factorization:
$$h \cdot c_i = h \cdot q_k \cdot c_{i,k} = h_k \cdot c_{i,k} = f_i.$$
\qed
\end{proof}

\medskip
 We now turn to classes of morphisms in $\Top$. By an \emph{embedding} we mean a monomorphism $m: X \to Y$ in $\Top$ such that $X$ caries the 
 \emph{initial topology}: every open set of $X$ has the form $m^{-1}(U)$ for some open set $U$ of $Y$.
 These are precisely the monomorphisms representing the subspaces of $Y$.
 Categorically, embeddings are precisely the regular monomorphisms in $\Top$.
 
 We prove that both monomorphisms and open embeddings form classes closed under directed colimits,
 but embeddings do not.  
 
 Let us recall that directed colimits of monomorphisms in $\Set$ may be given by directed unions of the corresponding subsets.

\begin{proposition}\label{P:directed} Both, the class of all monomorphisms and the class of all open embeddings, are closed under directed colimits in $\Top$.
\end{proposition}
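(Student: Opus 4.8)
The plan is to reduce everything to the concrete description of directed colimits in $\Top$. Since the forgetful functor $\Top\to\Set$ has both adjoints, it preserves colimits; hence $Z=\mathrm{colim}\,D$ has as underlying set the directed union of the $Z_i$ taken along the (injective) connecting maps, the cocone maps $c_i$ are injective, and $Z$ carries the final topology: a set $U\subseteq Z$ is open precisely when $c_j^{-1}(U)$ is open in $Z_j$ for every $j\in I$. Both classes under consideration obviously contain all isomorphisms and are closed under composition, so the notion of being closed under directed colimits applies to them.

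For the class of all monomorphisms---which in $\Top$ are exactly the injective continuous maps---the verification is routine. Property (1) is immediate, since each $c_i$ is injective and continuous. For property (2), given a cocone $f_i\colon Z_i\to A$ of monomorphisms, the induced map $h\colon Z\to A$ is continuous by the universal property, and it is injective by a one-line diagram chase: if $h(x)=h(x')$, use directedness to write $x=c_i(z),\,x'=c_i(z')$ for a common $i$ and apply injectivity of $f_i$.

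The substance of the proposition lies in the open-embedding case, and the key device is a common upper bound computation. Given indices $i,j$, choose (by directedness) some $k\geq i,j$; then, since $c_k$ is injective, one gets the identity $c_j^{-1}(c_i(V))=z_{j,k}^{-1}(z_{i,k}(V))$ for every $V\subseteq Z_i$. Applying this with $V=Z_i$ and using that the open embedding $z_{i,k}$ has open image together with the continuity of $z_{j,k}$ shows that $c_j^{-1}(c_i(Z_i))$ is open for all $j$, i.e. $c_i(Z_i)$ is open in $Z$; applying it with $V$ open in $Z_i$ and using that an open embedding is an open map shows that $c_i(V)$ is open in $Z$. Thus each $c_i$ is an injective continuous open map onto an open subset, hence an open embedding, giving property (1).

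For property (2) in the open-embedding case, note first that $h(Z)=\bigcup_i f_i(Z_i)$ is open in $A$, being a union of the open sets $f_i(Z_i)$. For openness of $h$, take $V$ open in $Z$ and decompose $h(V)=\bigcup_i\big(h(V)\cap f_i(Z_i)\big)$; a direct check, again using injectivity of $h$, identifies the $i$-th term with $f_i(c_i^{-1}(V))$, which is open in $A$ since $c_i^{-1}(V)$ is open and $f_i$ is an open map. Hence $h$ is open, and being an injective continuous open map onto an open image, it is an open embedding. The main obstacle throughout is this open-embedding bookkeeping---in particular the passage to a common upper bound $k$ and the two roles (open map and continuous map) played by the connecting embeddings; by contrast the monomorphism case needs no topology at all beyond continuity of the canonical maps.
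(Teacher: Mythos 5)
Your proof is correct and takes essentially the same route as the paper's: both reduce to the concrete description of directed colimits in $\Top$ (directed union of underlying sets, final topology), verify that the colimit cocone consists of open embeddings by passing to a common upper bound in the index poset, and show the factorization map is open by writing the image of an open set as a union of open pieces $f_i(c_i^{-1}(V))$. The only difference is presentational: the paper assumes without loss of generality that the connecting maps are subspace inclusions, whereas you keep them explicit and use injectivity of the cocone maps to justify the same identities.
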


\begin{proof}
(1) Monomorphisms: just use that the forgetful functor of $\Top$ preserves colimits (since it has a right adjoint),
and preserves and reflects monomorphisms.

(2) Open embeddings: Let $D = (Z_i)_{i \in I}$ be a directed diagram of open embeddings.
 Without loss of generality, we assume that, if $i \leq j$, then $Z_i$ is a subspace of $Z_j$, and the connecting map is the inclusion.
Then the colimit is the union $Z$, with open sets
those subsets $U$ with $U \cap Z_i$ open in $Z_i$ for each $i$. Therefore, every $c_i$ is an open mapping: every open set in $Z_i$ is also open in 
$Z_j$ for all $j>i$, thus it is also open in $Z$.

Given a cocone $f_i: Z_i \to X$ of open embeddings, the factorization map $f: Z \to X$ is clearly monic and continuous.
For every open set $U= \bigcup _{i \in I} U_i$ in $Z$ each set $f_i [U_i ]$ is open in $X$. The union of these sets is $f[U]$,
which is thus open, too. This proves that $f$ is an open mapping.
\qed
\end{proof}

\begin{corollary}\label{C:smooth} Let $\C$ be a full subcategory of $\Top$.

	{\rm(1)} If $\C$ is closed under monomorphisms and directed colimits of monomorphisms in $\Top$, then every finitely small
	object w.r.t. monomorphisms of $\C$ is finitely generated.

	{\rm(2)} If $\C$ is closed under open subspaces and colimits of open embeddings in $\Top$, then every finitely small
	object w.r.t. open embeddings in $\C$ is finitely generated w.r.t. open embeddings.	
\end{corollary}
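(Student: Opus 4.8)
The plan is to prove both parts at once by transfinite induction on cardinality, reusing the machinery of Theorem \ref{smooth} but now tracking morphisms \emph{out of} $X$ rather than membership in the class itself. Write $\M$ for the class of all monomorphisms in case (1) and for the class of all open embeddings in case (2); in both cases the target property ``finitely generated (w.r.t. $\M$)'' means that $\C(X,-)$ preserves colimits of \emph{all} directed diagrams of $\M$-morphisms, while the hypothesis ``finitely small w.r.t. $\M$'' gives preservation for continuous chains only. First I would record the structural fact that powers the induction: by Proposition \ref{P:directed} the class $\M$ is closed under directed colimits in $\Top$, and together with the stated closure hypothesis on $\C$ (closed under monomorphisms, resp.\ open subspaces, and under the respective directed colimits) this guarantees that every directed diagram of $\M$-morphisms in $\C$ has a colimit which is formed as in $\Top$, lies in $\C$, has its colimit cocone inside $\M$, and has all comparison maps between partial colimits again in $\M$. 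Hence every intermediate object and connecting map produced below stays inside $\C$ and inside $\M$.

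Let $D=(Z_i)_{i\in I}$ be a directed diagram of $\M$-morphisms in $\C$ with colimit $c_i:Z_i\to Z$, and let $X$ be finitely small w.r.t.\ $\M$; the goal is that every $f:X\to Z$ factorizes essentially uniquely through some $c_i$, in the sense of the Definition. I would induct on the cardinal $\lambda=|I|$. If $I$ is finite it has a greatest element and the claim is trivial. For infinite $\lambda$, invoke \cite{AR1994}, Lemma 1.6, exactly as in Theorem \ref{smooth} to write $I$ as the union of a continuous chain of directed subposets $I_k\subseteq I$ $(k<\lambda)$ with $|I_k|<\lambda$; form the subdiagrams $D_k=(Z_i)_{i\in I_k}$ with colimits $c_{i,k}:Z_i\to Q_k$ and comparison maps $q_{k,l}:Q_k\to Q_l$. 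By the first paragraph each $Q_k$ lies in $\C$ and each $q_{k,l}$ lies in $\M$, and as in Theorem \ref{smooth} the $Q_k$ form a continuous chain (indexed by the limit ordinal $\lambda$) whose colimit is $Z$.

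Now the two preservation inputs combine. By the induction hypothesis $\C(X,-)$ preserves each colimit $Q_k=\mathrm{colim}\,D_k$, since $D_k$ is a directed diagram of $\M$-morphisms in $\C$ of cardinality $<\lambda$; and by finite smallness of $X$, $\C(X,-)$ preserves the colimit $Z=\mathrm{colim}_k Q_k$ of the continuous $\M$-chain $(Q_k)$. I would then compose these two facts. Given $f:X\to Z$, finite smallness yields $f=q_k\cdot p$ for some $p:X\to Q_k$, and the induction hypothesis yields $p=c_{i,k}\cdot g$ for some $i\in I_k$ and $g:X\to Z_i$, whence $f=c_i\cdot g$. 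For essential uniqueness, suppose $c_i\cdot g=c_i\cdot g'$ with $i\in I_k$; finite smallness merges $c_{i,k}\cdot g$ and $c_{i,k}\cdot g'$ by some $q_{k,l}$, so that $c_{i,l}\cdot g=c_{i,l}\cdot g'$, and the induction hypothesis applied to $Q_l$ then supplies a connecting map of $D$ merging $g$ and $g'$. This closes the induction and proves both (1) and (2).

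The one point that needs genuine care—beyond the bookkeeping of composing the factorizations—is the very first paragraph: certifying that the partial colimits $Q_k$ can be computed in $\C$ exactly as in $\Top$ and that both their cocones and the comparison maps $q_{k,l}$ land in $\M$. This is precisely where Proposition \ref{P:directed} and the two closure hypotheses on $\C$ are indispensable, since without them the chain $(Q_k)$ would fail to be a continuous chain of $\M$-morphisms in $\C$ and finite smallness w.r.t.\ $\M$ could not be invoked on it. Everything else is the cardinality induction already carried out for Theorem \ref{smooth}, merely transported from ``$\M$ is closed under the colimit'' to ``$\C(X,-)$ preserves the colimit''.
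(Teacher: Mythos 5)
Your proposal is correct and takes essentially the same route as the paper: the paper's own (very terse) argument likewise combines Proposition \ref{P:directed} with the closure hypotheses on $\C$ to get that the class is closed under directed colimits in $\C$, and then appeals to the cardinality-induction reduction of directed colimits to colimits of continuous chains from the proof of Theorem \ref{smooth} --- precisely the induction you carry out explicitly. The only difference is one of detail: you spell out the combination of the induction hypothesis with finite smallness (including the essential-uniqueness bookkeeping via $q_{k,l}\cdot c_{i,k}=c_{i,l}$), which the paper leaves implicit.
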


Indeed, in (1) we conclude that monomorphisms on $\C$ form a class closed under directed colimits. Whenever the hom-functor of an object $C\in \C$ preserves colimits of continuous chains of monomorphisms in $\C$, then it preserves
directed colimits of chains. This follows from the reduction of the latter colimits to colimits of continuous chains that
we have seen in the proof of Theorem \ref{smooth}

The argument for open embeddings is the same.

\begin{proposition}\label{dir} Every directed diagram of embeddings in $\Top$ has a colimit cocone formed by embeddings.
	
\end{proposition}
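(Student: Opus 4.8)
The plan is to realize the colimit concretely and then produce, for each index and each open set of the corresponding space, an open set of the colimit restricting to it. As in the proof of Proposition \ref{P:directed}, since the forgetful functor $\Top\to\Set$ preserves colimits and directed colimits of monomorphisms in $\Set$ are directed unions, I may assume that $Z_i$ is a subspace of $Z_j$ with the connecting map the inclusion whenever $i\leq j$, that the colimit $Z=\bigcup_i Z_i$ carries the final topology (so $W\subseteq Z$ is open if, and only if, $W\cap Z_j$ is open in $Z_j$ for every $j$), and that the cocone maps $c_i:Z_i\hookrightarrow Z$ are the inclusions. By Proposition \ref{P:directed} each $c_i$ is already a monomorphism and is continuous, so the only thing left to prove is that each $c_i$ is an embedding: every open set $O$ of $Z_i$ must arise as $W\cap Z_i$ for some open $W$ of $Z$.

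The heart of the argument is a canonical coherent extension of $O$ across the upper set of $i$, obtained by a closure--complement construction. Writing $C=Z_i\setminus O$, which is closed in $Z_i$, I set
$$W=\bigcup_{j\geq i}\big(Z_j\setminus\overline{C}^{\,Z_j}\big),$$
where $\overline{C}^{\,Z_j}$ denotes the closure of $C$ in $Z_j$. The point of this choice is that, because $Z_j$ is a subspace of $Z_k$ for $i\leq j\leq k$, closures are compatible with restriction: $\overline{C}^{\,Z_k}\cap Z_j=\overline{C}^{\,Z_j}$. This makes the family $U_j:=Z_j\setminus\overline{C}^{\,Z_j}$ coherent, in the sense that $U_k\cap Z_j=U_j$ for $i\leq j\leq k$, which is precisely what a single global subset restricting correctly to each level requires.

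With this in hand the verification splits into two routine checks. First, $W\cap Z_i=O$: since $C$ is closed in $Z_i$ one has $\overline{C}^{\,Z_i}=C$, whence $W\cap Z_i=Z_i\setminus C=O$. Second, $W\cap Z_k$ is open in $Z_k$ for every $k\in I$ (not merely for $k\geq i$), so that $W$ is open in $Z$ by the description of the final topology. For $k\geq i$ the coherence above gives $W\cap Z_k=U_k=Z_k\setminus\overline{C}^{\,Z_k}$, which is open; for an arbitrary $k$ I choose, by directedness, some $m\geq i$ with $m\geq k$, and then $W\cap Z_k=(W\cap Z_m)\cap Z_k=U_m\cap Z_k$ is the trace on the subspace $Z_k$ of the open set $U_m$ of $Z_m$, hence open. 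Together these give the required open $W$ with $W\cap Z_i=O$, so $c_i$ is an embedding.

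The step I expect to be the real obstacle---and the one where a naive approach fails---is securing global coherence. Extending $O$ to an open set of each $Z_j$ individually is trivial, but arbitrary such extensions need not agree on overlaps, and over a directed poset that is not a chain there is no obvious way to reconcile incompatible choices one index at a time. The closure--complement formula circumvents this entirely by making a single canonical choice that is automatically restriction-compatible, turning the potentially delicate gluing problem into the two elementary computations above.
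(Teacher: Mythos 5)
Your proof is correct and takes essentially the same approach as the paper's: the paper also constructs, for each $j\geq i$, the largest open subset $U_j\subseteq Z_j$ meeting $Z_i$ in the given open set---which is exactly your $Z_j\setminus\overline{C}^{\,Z_j}$---then takes the union over the (cofinal) upper set of $i$ and verifies coherence and openness in the final topology just as you do. The only cosmetic difference is that the paper first reduces to the case where $i$ is a least element of $I$, whereas you keep the general index set and invoke directedness at the end; the underlying canonical extension and gluing argument are identical.
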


\begin{proof}
Let $D = (Z_i)_{i \in I}$ be  a directed diagram of embeddings with a colimit cocone $c_i:Z_i \to Z$.
We prove that, if $I$ has a least element, $0$, then $c_0$ is an embedding.
The general statement then follows: each $c_i$ is an embedding
because our diagram has the same colimit as the subdiagram indexed by
the upper set of $i$ in $I$.

Our task is, given an open set $V \subseteq Z_0$, to find an open set $U \subseteq Z$ with $V=U \cap Z_0$.
The space $Z_0$ is a subspace of each $Z_i$, thus there exists an open
set in $Z_i$ that meets $Z_0$ in $V$. Denote by $U_i \subseteq Z_i$ the
largest such open set (the union of all open sets with the desired
property). Then, for all $i \leq j$ in $I$, we have $U_j \cap Z_i \subseteq U_i$, since $U_j \cap Z_i$ is open in $Z_i$ and also meets $Z_0$ in $V$.

This implies that the set $U = \bigcup _{j \in I} U_j$ fulfills $U \cap Z_i
= U_i$ for all $i$:
$$U \cap Z_i =\bigcup _{j\in I} (U_j \cap Z_j) \subseteq U_i$$
and the opposite inclusion follows from choosing $j:=i$.
. Thus $U$ is open in $Z$, and this is the desired subset with $V= U
\cap Z_0$.
\qed
\end{proof}

	\medskip
	
	For every infinite set $X$ the \emph{cofinite topology} has as non-empty open sets
	precisely those whose complement is finite. We denote the corresponding space  by $X_{\mathrm {cof}}$.
	
\begin{example}
	\rm
	 In contrast to the previous proposition, given an
	$\omega$-chain of embeddings in $\Top$ and a cocone of that chain formed by
	embeddings, the factorizing morphism from the colimit need not be an
	embedding.
	
	For every natural number $n$ consider the 
	space $A_n$ with the cofinite topology on the set
	$\{0,...,n-1\} \times \Bbb{N}$. Denote by $A$ the colimit of the
	$\omega$-chain of embeddings of $A_n$ for $n< \omega
	$. This is a space on the set  	$ \Bbb{N} \times \Bbb{N} $ in which the set $\Bbb{N} \times\{0\}$ 
	is closed (because its intersection with each $A_n$ is finite).
	 
	The inclusions of all $A_n$ into the
	 space 
	$( \Bbb{N} \times \Bbb{N}) _{\mathrm{cof}}$ form a cocone of embeddings. But the factorizing
	map  $\id:A \to ( \Bbb{N} \times \Bbb{N})_
{\mathrm{\mathrm{cof}}}$
 is not an embedding. Indeed, the set $\Bbb{N} \times\{0\}$ is not closed  in $( \Bbb{N} \times  \Bbb{N})_{\mathrm{cof}}$.
	
	\end{example}

\begin{corollary} \label{cor}
	In the categories $\Top$, $\Haus$, $\Top_1$ and $\Top_0$, finite generation is equivalent to finite smallness w.r.t. monomorphisms.
	And finite generation w.r.t. open embeddings is equivalent to finite smallness w.r.t. open embeddings.
\end{corollary}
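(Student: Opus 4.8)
In each half of the statement the implication ``finite generation $\Rightarrow$ finite smallness'' is immediate: a continuous chain of morphisms from $\M$ is in particular a directed diagram with connecting morphisms in $\M$, so an object whose hom-functor preserves all directed colimits of such diagrams preserves the chain-colimits among them. Hence the work lies in the two reverse implications. My plan is to obtain these from Corollary \ref{C:smooth}, whose conclusion reads precisely ``finitely small w.r.t. $\M$ $\Rightarrow$ finitely generated w.r.t. $\M$'', and thus to verify, for each category and each of the two classes, the closure hypotheses under which that corollary applies.

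For the class of open embeddings I expect a single uniform argument. Proposition \ref{P:directed}(2) already gives closure of open embeddings under directed colimits in $\Top$, and each of $\Haus$, $\Top_1$, $\Top_0$ is closed under open subspaces. It then remains to check that such a colimit $Z=\bigcup_i Z_i$, in which every $Z_i$ is an open subspace, inherits the separation axiom of the $Z_i$: for $T_0$ and for Hausdorff one separates two given points inside a common $Z_i$ (directedness) by open sets, which stay open in $Z$; for $T_1$ one notes that $Z\setminus\{z\}$ meets each $Z_j$ in the open set $Z_j\setminus\{z\}$, so singletons remain closed. Thus $Z$ stays in $\C$, and Corollary \ref{C:smooth}(2) delivers the open-embedding equivalence for all four categories at once.

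For the class of all monomorphisms the cases split. For $\Top$ the required closure is Proposition \ref{P:directed}(1), so Corollary \ref{C:smooth}(1) applies directly. For $\Top_1$ I would check by hand that $\Top_1$ is closed under directed colimits of monomorphisms in $\Top$: the colimit $Z$ carries the final topology, its cocone components $c_i$ are monic, hence injective, by Proposition \ref{P:directed}(1), so $c_i^{-1}(\{z\})$ is a point or empty for every $z\in Z$; then $c_i^{-1}(Z\setminus\{z\})=Z_i\setminus c_i^{-1}(\{z\})$ is open in the $T_1$-space $Z_i$, whence $\{z\}$ is closed in $Z$ and $Z$ is again $T_1$. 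Together with the evident closure under subspaces, Corollary \ref{C:smooth}(1) again applies.

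I expect the monomorphism case for $\Haus$ and $\Top_0$ to be the genuine obstacle, because this route is simply unavailable there: neither category is closed under directed colimits of monomorphisms in $\Top$, as shown by Herrlich's $\omega$-chain of closed Tychonoff subspaces with non-Hausdorff union (\ref{Herrlichex}) and by the $\omega$-chain of monomorphisms of $T_0$-spaces whose $\Top$-colimit carries an indiscrete two-point subspace (\ref{T0example}). The reduction of directed colimits to continuous chains behind Corollary \ref{C:smooth} therefore cannot be invoked. Instead I would derive the equivalence from triviality of both sides: in each of $\Haus$ and $\Top_0$ the only finitely generated object, and equally the only object finitely small w.r.t. monomorphisms, is the empty space. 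The point is that the two examples just cited are \emph{continuous} $\omega$-chains, so each of them already rules out finite smallness w.r.t. monomorphisms --- a fortiori finite generation --- for every nonempty space; both classes thus collapse to $\{\emptyset\}$ and trivially coincide.
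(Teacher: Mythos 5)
Your proposal is correct, and it is in fact more careful than what the paper itself offers. Corollary \ref{cor} appears there with no written proof; its placement immediately after Proposition \ref{P:directed} and Corollary \ref{C:smooth} suggests that \ref{C:smooth} is meant to cover all four categories. For $\Top$, for $\Top_1$, and for the open-embedding half of the statement, your argument is exactly that intended assembly, with the closure checks made explicit: your hand verification that $\Top_1$ is closed under directed colimits of monomorphisms in $\Top$ is recorded later in the paper as Remark \ref{ulra}(1), and your separation-inheritance argument for unions of open subspaces is the unstated verification also needed for Corollary \ref{C:compact}. Your genuine departure is the monomorphism case for $\Haus$ and $\Top_0$, and you are right that a departure is necessary: the closure hypothesis of \ref{C:smooth}(1) fails for these two categories, as witnessed by the paper's own Examples \ref{Herrlichex} and \ref{T0example}, so read literally the corollary cannot be deduced from \ref{C:smooth} there; your triviality argument (both classes collapse to $\{\emptyset\}$) repairs this, at the cost of forward references to Sections 4 and 5 (compare Remark \ref{fgTychHaus} and Corollary \ref{T0fg}, which state exactly the facts you need). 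One point deserves tightening: the mere fact that the $\Top$-colimit of those $\omega$-chains leaves the subcategory does not by itself refute finite smallness of nonempty spaces; the operative mechanism is that the reflection map of the chain colimit merges two points while all connecting maps are monic, so that a constant map from a putative nonempty finitely small space would violate essential uniqueness of factorizations --- this is the continuous-chain version of Proposition \ref{P:refl} (Remark \ref{R:refl}). Since both cited examples do merge a pair of points under the respective reflections, your argument is complete once you invoke that proposition explicitly.
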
	

\medskip
Thus the only class of monomorphisms we study (in our leading examples of subcategories of $\Top$) where the two `smallness concepts' can differ
is that of embeddings.
 \begin{remark}\label{conti}
 	\rm
 	Every infinite $X$ set of cardinality $\lambda$ is a union of a continuous chain of subsets of cardinalities smaller than $\lambda$. Indeed, we 
 	can present $X$ as $\{x_i\mid i<\lambda \}$. For each $j< \lambda$ the set $X_j =\{x_i\mid i<j\}$ has smaller cardinality, and $X$ is the union of the continuous chain $(X_j)_{j<\lambda}$.
 \end{remark}
\begin{theorem}\label{T}
	In the category $ \Top$ a space is  
	\begin{itemize}
	\item	[{\rm(1)}] Finitely generated w.r.t. embeddings if, and only if, it is finite.
	\item[{\rm(2)}] Finitely generated w.r.t. open embeddings if, and only if, it is compact.
\end{itemize}
 \end{theorem}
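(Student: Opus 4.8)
The plan is to prove both equivalences by the same two-step pattern: first show that a space with the stated property (finite, resp.\ compact) satisfies the relevant preservation condition, then exhibit for a space failing it an explicit diagram witnessing the failure. Throughout I would lean on Proposition \ref{dir} and Proposition \ref{P:directed}: for a directed diagram of embeddings (resp.\ open embeddings) I may assume, after passing to the colimit, that every $Z_i$ is literally a subspace (resp.\ an open subspace) of $Z=\bigcup_i Z_i$, with $U\subseteq Z$ open precisely when each $U\cap Z_i$ is open in $Z_i$. This is exactly what makes set-theoretic corestrictions continuous and is the technical backbone of both ``if'' directions.

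For part (1), the easy direction is that a finite space $X$ is finitely generated w.r.t.\ embeddings. Given $f\colon X\to Z$, the image $f(X)$ is finite, so by directedness it lands in a single stage $Z_i$; since $Z_i$ is a subspace of $Z$ by Proposition \ref{dir}, the corestriction $g\colon X\to Z_i$ is automatically continuous, and essential uniqueness is immediate because $c_i$ is monic, so any two factorizations already coincide. For the converse I would reuse verbatim the first non-trivial paragraph of the proof of Proposition \ref{fpTop}: present $X$ as the colimit of its finite subsets equipped with the \emph{indiscrete} topology and connected by inclusions. The key observation is that these inclusions are embeddings --- a subspace of an indiscrete space is indiscrete --- so the diagram is admissible for the embedding variant. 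Its colimit is the indiscrete topology on $X$, the identity $X\to Z$ is then continuous, and factoring it through a finite stage forces $X$ to be finite.

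For part (2), a compact $X$ is finitely generated w.r.t.\ open embeddings: given $f\colon X\to Z$, the preimages $f^{-1}(Z_i)$ form a directed open cover of $X$ (each $Z_i$ being open in $Z$ by Proposition \ref{P:directed}), so compactness together with directedness yields a single $i$ with $f^{-1}(Z_i)=X$, i.e.\ $f(X)\subseteq Z_i$; corestriction and the mono property of $c_i$ then finish existence and uniqueness. For the converse, if $X$ is not compact I would fix an open cover $\mathcal{U}$ with no finite subcover and form the directed diagram indexed by the finite subsets $F\subseteq\mathcal{U}$, with $Z_F=\bigcup_{U\in F}U$ carrying its open subspace topology and inclusions as connecting maps (these are open embeddings since $Z_F$ is open in $X$, hence in $Z_{F'}$). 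A short check shows the colimit of this diagram is $X$ itself, with the $c_F$ the inclusions; hence $\id_X\colon X\to Z=X$ is a morphism whose factorization through some $Z_F$ would force $X=\bigcup_{U\in F}U$, contradicting the absence of a finite subcover.

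The ``if'' directions become genuinely routine once Propositions \ref{dir} and \ref{P:directed} are in hand, so the real content lies in the two ``only if'' constructions, and the delicate point common to both is \emph{identifying the colimit topology correctly}: I must verify that the indiscrete-subspace diagram has indiscrete colimit (part 1) and that the finite-union diagram reconstructs the original topology of $X$ (part 2), since in each case it is the identity map into that colimit that exhibits the failure of factorization. Establishing the latter --- that $V\subseteq X$ is open in $\mathrm{colim}_F Z_F$ if and only if it is open in $X$ --- is the one spot requiring a small but careful argument, using that each $Z_F$ is an open subspace of $X$; this is where I expect the main (if modest) obstacle to sit.
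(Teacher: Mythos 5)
Your proof is correct and, in substance, it is the paper's own proof: both ``if'' directions (a finite $X$ factors set-theoretically through a stage, which is continuous because $c_i$ is an embedding by Proposition \ref{dir}; a compact $X$ admits a finite subcover of the open cover $(f^{-1}(Z_i))_{i\in I}$, using Proposition \ref{P:directed}), and the ``only if'' direction of (2) (the directed system of finite unions $U_F$ of a cover, whose colimit is $X$ precisely because each $U_F$ is open in $X$) coincide with the paper's arguments. The single point of divergence is the ``only if'' direction of (1): you realize the indiscrete space on the underlying set of $X$ as the directed colimit of its \emph{finite} indiscrete subspaces, recycling the computation from Proposition \ref{fpTop}, whereas the paper writes it as the colimit of a \emph{continuous chain} of indiscrete subspaces of cardinality smaller than $|X|$ (Remark \ref{conti}). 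Both work: your diagram consists of embeddings, its colimit is indiscrete by exactly the two-point-set argument in Proposition \ref{fpTop}, and factoring $\id_X$ through a finite stage forces $X$ to be finite. What the paper's choice buys, and yours does not, is the strengthening recorded immediately after Theorem \ref{T}: since only a chain was used there, finite spaces are also exactly the finitely \emph{small} objects w.r.t.\ embeddings in $\Top$; your system of finite subspaces is directed but not a chain, so that remark would still require the paper's construction (or a separate argument).
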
          
          
\begin{proof} 
	(1) Let $X$ be a finite space. Suppose that  $Z$ is a colimit of a directed diagram of embeddings as in Proposition \ref{dir}, then
every continuous map $f: X \to Z$ clearly factorizes in $\Set$ through some colimit map $c_i$,
and the factorization is continuous since $c_i$ is an embedding.

Conversely, if $X$ is finitely generated w.r.t. embeddings, $X$ is finite. Indeed, if $X$ is infinite, we derive a contradiction. Let $X'$ be the indiscrete space on the same set. Express it as the colimit of a continuous chain of subspaces of smaller cardinalities (using the above remark).
Then the identity map from $X$ to $X'$ does not factorize through any of the members of the chain.

(2) Let $X$ be compact and consider a directed diagram $(Z_i)_{i\in I}$ of open embeddings, which we assume to be given by subspace inclusions. Then its colimit $Z$ is the union, containing every $Z _i$ as an open subspace. Hence, any continuous map $f:X\to Z$ gives us an open cover $(f^{-1}(Z_i))_{i\in I}$ of $X$. We can choose a finite subcover $(f^{-1}(Z_i))_{i\in F}$. Choose an upper bound $j$ of $F\subseteq I$ to obtain a (necessarily unique) factorization of $f$ through $Z_j$.

Conversely, for $X$ finitely generated w.r.t. open embeddings, let $(U_k)_{k\in K}$ be an open cover of $X$. Then, for any finite set $F\subseteq K$, let $U_F=\bigcup_{k\in F}U_k$ and consider the directed system of all $U_F$. Its colimit is $X$: a set $U \subseteq X$ is open in $X$ if, and only if,
its intersection with each $U_F$ is open (since $U$ is the union of all of those intersections). Therefore the identity map on $X$ must factorize through some $U_F$. This shows that $X$ is compact.
\qed
\end{proof}

\medskip
In item (1) we have only used a continuous chain, therefore the same result characterizes finite spaces in $\Top$ as precisely the finitely \emph{small} objects w.r.t. embeddings.

\begin{corollary} \label{C:compact}
	In each of the categories $\Haus$, $\Top_1$ and $\Top_0$ a space is finitely generated w.r.t. open
	embeddings if, and only if, it is compact.
	\end{corollary}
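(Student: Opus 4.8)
The plan is to leverage Theorem~\ref{T}(2), which already characterizes the finitely generated objects w.r.t. open embeddings in $\Top$ itself as precisely the compact spaces, and to transfer this result to each of the full subcategories $\Haus$, $\Top_1$ and $\Top_0$ via Corollary~\ref{C:smooth}(2). The key structural observation is that all three categories are closed under open subspaces and under colimits of open embeddings, computed in $\Top$. Indeed, an open subspace of a Hausdorff (resp.\ T$_1$-, T$_0$-) space is again Hausdorff (resp.\ T$_1$-, T$_0$-), since each of these separation axioms is hereditary. For closure under colimits of open embeddings, I would invoke Proposition~\ref{P:directed}(2): the colimit of a directed diagram of open embeddings in $\Top$ is the union $Z$ whose open sets are those meeting each $Z_i$ in an open set. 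I must then check that if every $Z_i$ satisfies the relevant separation axiom, so does $Z$; since distinct points $x,y$ of $Z$ both lie in some common $Z_j$ (by directedness), and the open sets of $Z_j$ separating them extend to open sets of $Z$ via the open embedding $c_j$, the axiom is inherited.

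Having established these two closure properties, Corollary~\ref{C:smooth}(2) tells me that in each such category finite smallness w.r.t.\ open embeddings coincides with finite generation w.r.t.\ open embeddings, but more importantly for the present claim I want the characterization itself, not merely the equivalence of the two smallness notions. So the main line of argument is a direct adaptation of both directions of the proof of Theorem~\ref{T}(2), now carried out inside $\calC \in \{\Haus, \Top_1, \Top_0\}$. For the forward direction (compact $\Rightarrow$ finitely generated w.r.t.\ open embeddings), given a directed diagram of open embeddings in $\calC$, its colimit is computed as in $\Top$ and remains in $\calC$ by the closure just verified; a continuous map $f\colon X \to Z$ from a compact $X$ yields the open cover $(f^{-1}(Z_i))_{i\in I}$, a finite subcover supplies an upper bound $j$, and $f$ factors through $Z_j$ exactly as before. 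For the converse, given a space $X \in \calC$ that is finitely generated w.r.t.\ open embeddings and an open cover $(U_k)_{k\in K}$, I form the directed system of finite unions $U_F$; each $U_F$ is an open subspace of $X$ hence lies in $\calC$, their colimit is $X$, and the factorization of $\id_X$ forces some $U_F = X$, establishing compactness.

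The one point deserving genuine care is that the converse direction builds the directed diagram out of open subspaces $U_F$ of the given space $X$, so I need these $U_F$ to be objects of $\calC$ and the inclusions $U_F \hookrightarrow U_{F'}$ to be open embeddings \emph{in} $\calC$; this is immediate from heredity of the separation axioms and from the fact that the full embedding $\calC \hookrightarrow \Top$ reflects open embeddings. Likewise the colimit of the $U_F$-diagram must be $X$ itself as an object of $\calC$, which holds because that colimit is computed in $\Top$ and $\calC$ is closed under it. The main obstacle, then, is not any deep difficulty but rather the bookkeeping of verifying that colimits and subspaces relevant to the argument stay within $\calC$ and that the universal property invoked is the one internal to $\calC$; once the two closure properties are in hand, the proof is essentially a verbatim transcription of Theorem~\ref{T}(2). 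I would therefore present the corollary's proof as: first cite the closure of each $\calC$ under open subspaces and colimits of open embeddings, then observe that the two halves of the proof of Theorem~\ref{T}(2) go through unchanged because every space arising in that argument from an object of $\calC$ is again an object of $\calC$.
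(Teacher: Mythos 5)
Your proposal is correct and follows essentially the same route as the paper, which simply notes that $\Haus$, $\Top_1$ and $\Top_0$ are closed in $\Top$ under subspaces and under directed colimits of open embeddings, so that the proof of Theorem~\ref{T}(2) carries over verbatim. Your write-up merely makes explicit the closure verifications (heredity of the separation axioms and preservation of separation under colimits of open embeddings via Proposition~\ref{P:directed}(2)) that the paper leaves implicit.
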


Indeed, these subcategories are closed in $\Top$ under subspaces and under directed colimits of embeddings. Thus the proof of the above theorem works in these subcategories as well. 

We now consider directed systems of arbitrary monomorphisms in a full reflective subcategory of $\Top$.

\begin{proposition}\label{P:refl}	Let $\C$ be a full reflective subcategory of $\Top$ containing  a non-empty finitely generated space.
For every directed diagram of monomorphisms in $\C$ with colimit
	$Z$ in $\Top$, the reflection map $r_Z: Z \to RZ$ is bijective.	
\end{proposition}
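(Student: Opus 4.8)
The plan is to reduce the bijectivity of $r_Z$ to the detection and separation of points by the one-point space $\ONE$, after first upgrading the hypothesis into finite generation of $\ONE$ itself. Write $D=(Z_i)_{i\in I}$ for the given directed diagram of monomorphisms, $c_i\colon Z_i\to Z$ for its colimit cocone in $\Top$, and $R,r$ for the reflector and the reflection maps. I would begin by recording two standard facts. First, since the forgetful functor $\Top\to\Set$ preserves colimits and directed colimits of monomorphisms in $\Set$ are directed unions, the underlying set of $Z$ is $\bigcup_i Z_i$ with every $c_i$ injective; in particular any two points of $Z$ already lie in a common $Z_k$. Second, since $\C$ is reflective, the colimit of $D$ computed in $\C$ is $RZ$, with colimit cocone $\gamma_i:=r_Z\cdot c_i\colon Z_i\to RZ$; and the connecting morphisms $z_{k,j}$ of $D$, being monic in $\C$, are injective.

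The decisive step I would carry out next — and the place where the hypothesis is genuinely used — is to show that $\ONE$ is a finitely generated object of $\C$. As a full reflective subcategory, $\C$ is closed under limits in $\Top$ and hence contains the terminal object $\ONE$. Let $P\in\C$ be the given non-empty finitely generated space. Non-emptiness provides a point $s\colon\ONE\to P$, and the unique map $P\to\ONE$ is a retraction for $s$, so $\ONE$ is a retract of $P$ in $\C$. Since finitely generated objects are closed under retracts — $\C(\ONE,-)$ is a retract of $\C(P,-)$ and therefore inherits preservation of directed colimits of monomorphisms — I conclude that $\ONE$ is finitely generated. This is the only point at which the non-emptiness of $P$ is consumed, and it is essential: had $P$ been empty, no such section $\ONE\to P$ would exist.

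With $\ONE$ finitely generated I would then obtain both surjectivity and injectivity of $r_Z$ by probing points. For surjectivity, a point of $RZ$ is a morphism $p\colon\ONE\to RZ$; since $RZ=\mathrm{colim}\,D$ in $\C$ and $\ONE$ is finitely generated, $p$ factors as $p=\gamma_k\cdot q=r_Z\cdot(c_k\cdot q)$ for some $q\colon\ONE\to Z_k$, so $p$ lies in the image of $r_Z$. For injectivity, suppose $r_Z(x)=r_Z(y)$ with $x,y\in Z$; I realize them in a common $Z_k$ as points $x',y'\colon\ONE\to Z_k$ with $c_k(x')=x$ and $c_k(y')=y$. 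Then $\gamma_k\cdot x'=r_Z(x)=r_Z(y)=\gamma_k\cdot y'$, so the essential-uniqueness clause in the strengthened form (iii) of the Remark (valid after enlarging $k$ past the factorization index) produces a connecting morphism $z_{k,j}$ merging $x'$ and $y'$; as $z_{k,j}$ is injective, $x'=y'$, whence $x=y$.

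The step I expect to be the main obstacle to state cleanly is the passage \emph{non-empty finitely generated $P\Rightarrow\ONE$ finitely generated}: the whole force of the hypothesis lies in recognizing that it is exactly what turns the one-point space into a probe capable of both detecting and separating the points of $RZ$. Once that is in place, the remainder is a direct application of the defining factorization property of finitely generated objects together with the injectivity of the connecting monomorphisms.
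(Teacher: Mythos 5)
Your proof is correct and is essentially the paper's own argument: the paper also probes the colimit $RZ$ in $\C$ with the finitely generated object, gets surjectivity from the factorization property and injectivity from essential uniqueness plus monicity of the colimit maps. Your preliminary retract step making $\ONE$ finitely generated is just a repackaging of the paper's device of using the \emph{constant} maps $X\to RZ$ (which factor through $X\to\ONE\to RZ$ anyway), so the two proofs coincide in substance.
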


\begin{proof} 
	Let $X$ be a non-empty finitely generated space. Denote by $c_i: Z_i \to Z$ the colimit of a directed diagram in $\Top$;
	its colimit in $\C$ is given by $r_Z \cdot c_i: Z_i \to RZ$.
	 To prove that $r_Z$ must be bijective, we consider an element $y$ of $RZ$ and let $f: X \to RZ$ be the constant map
	of value $y$. Since it factorizes through some $r_Z \cdot c_i$, we see that $y$ lies in the image of $r_Z$. Furthermore, since the colimit map $c_i$ is monic, the (essential) uniqueness of the factorization forces the preimage of $y$ in $Z$ to be unique.
	\qed
\end{proof}

\begin{remark}\label{R:refl}
	\rm
A completely analogous proposition holds for directed diagrams of (open) embeddings and 
	finitely generated objects w.r.t. (open) embeddings.
\end{remark}

	\medskip
	Gabriel and Ulmer proved in \cite{GU1971} that the only $\lambda$-generated object in the category of compact Hausdorff spaces is the empty space. Here is a small illustration of the above remark in the same direction:

	\begin{example}
		\rm
		 In the category of compact Hausdorff spaces the empty space is the only finitely generated object w.r.t. open embeddings.
Indeed, apply the above remark to the $\omega$-chain of discrete spaces $\{0,...,n-1\}$ for $n<\omega$ with limit $\mathbb N$ (discrete) in $\Top$: the reflection map into the Stone-$\check{\text{C}}$ech compactification $\beta(\mathbb N)$ fails to be bijective.
\end{example}

Next we provide a number of reflective subcategories of $\Top$ in which the empty space is the only finitely presentable object. 
Key to our argument is the following example.

\begin{example}\label{exam}
	\em
	We construct a directed diagram $D$ of compact, metrizable spaces whose colimit in $\Top$ is the indiscrete two-element space. The underlying directed poset of $D$ has as elements all pairs $(A,B)$ with $A$ a non-empty finite set of rational numbers in the unit interval $[0,1]$, and $B$ a non-empty finite set of irrational numbers in $[0,1]$. It is ordered by inclusion in each component, which gives a directed poset. The corresponding object of $D$ is the space $S_{A,B}$ which is the quotient of the unit interval (with its Euclidean topology) that identifies all points of $A$, and likewise for  $B$; we may write 
	$$S_{A,B}= \{A,B\}\cup([0,1]\setminus(A\cup B)).$$ 
	We have
	 the quotient maps $q_{A,B}:[0,1]\to S_{A,B}$, making the open sets of  $S_{A,B}$ be given by those open sets $U \subseteq [0,1]$ such that $A \subseteq U$ or $A \cap U= \emptyset$, analogously with $B$. The connecting maps of our diagram are the quotient maps commuting with the maps $q_{A,B}$.
	
	Clearly, $S_{A,B}$ is compact and, by finiteness of $A\cup B$, also Hausdorff. In fact, the spaces $S_{A,B}$ are even metrizable, having a countable base of open sets which, in the description above, are given by finite unions of open intervals in $[0,1]$ with rational endpoints disjoint from $A \cup B$. 
	
	The colimit of our diagram is a two-point space $C=\{\mathrm{rat},\mathrm{irr}\}$. 
	The colimit maps $s_{A,B}: S_{A,B}\to C$ are the obvious factorizations, where the inverse image of $\mathrm{rat}$ under the map $s_{A,B}\cdot q_{A,B}$ is (independently from $(A,B)$) the set of rational numbers in $[0,1]$. Since this set is neither open nor closed in $[0,1]$, the set $s_{A,B}^{-1}(\mathrm{rat})$ is not open or closed in any $S_{A,B}$, so that the point $\mathrm{rat}$ cannot be open or closed in $C$ either. Consequently, $C$ is indiscrete.
\end{example}

\begin{remark}
	\rm
	An example, more complicated than the above one, but designed for a similar reason, may be found in Dugundji's book \cite{Dugundij1966}, p. 422. He presents a directed system of `good' quotient spaces of the circle $S^1$ whose colimit is nevertheless an infinite indiscrete space. We note that the existence of directed systems of `good' spaces with poor colimits in $\Top$ has also motivated the search for sufficient conditions ensuring a better separation behaviour of the colimit; see, for example,  \cite{HajekStrecker1972}.
\end{remark}

\begin{theorem}\label{T0fp}
	Let $\mathcal C $ be a full reflective subcategory of $\Top$, contained in the category $\Top_0$ and  containing all compact metrizable spaces. Then the empty space is the only finitely presentable object.
\end{theorem}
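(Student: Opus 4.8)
The plan is first to dispose of the trivial half: the empty space lies in $\mathcal{C}$, since it is compact and metrizable, and it is finitely presentable because $\mathcal{C}(\emptyset,-)$ is constant at the one-point set and hence preserves all directed colimits. It remains to show that no non-empty object of $\mathcal{C}$ is finitely presentable, and for this the engine is precisely the diagram $D$ of Example \ref{exam}.

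Next I would record that $D$ lives in $\mathcal{C}$: every $S_{A,B}$ is compact metrizable, hence an object of $\mathcal{C}$ by hypothesis, and the indexing poset is directed. Since $\mathcal{C}$ is reflective, the colimit of $D$ formed in $\mathcal{C}$ is the reflection $RC$ of its $\Top$-colimit $C=\{\mathrm{rat},\mathrm{irr}\}$, with colimit cocone $r_C\cdot s_{A,B}\colon S_{A,B}\to RC$. Because $C$ is indiscrete while $RC$ lies in $\Top_0$, any continuous map out of $C$ into a $T_0$-space must be constant (a separating open set would pull back to a nontrivial open subset of the indiscrete space $C$); thus $r_C$ identifies $\mathrm{rat}$ and $\mathrm{irr}$, and in fact $RC\cong\ONE$. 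I would emphasize here that the connecting maps of $D$ are surjective quotient maps, \emph{not} monomorphisms, so Proposition \ref{P:refl} does not apply and we must invoke finite presentability in its full directed form.

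Now suppose, for a contradiction, that $X\in\mathcal{C}$ is non-empty and finitely presentable, and fix a point $x_0\in X$. Choose any index $(A,B)$ and consider the two constant maps $g,g'\colon X\to S_{A,B}$ taking the values $A$ and $B$ (the collapsed rational and irrational points), respectively; these are distinct points of $S_{A,B}$. Composing with the colimit map $r_C\cdot s_{A,B}$ sends both to the single point of $RC\cong\ONE$, so $g$ and $g'$ are two factorizations, through the $(A,B)$-th colimit map, of the unique map $f\colon X\to\ONE$. The crucial point is that the \emph{existence} of a factorization is automatic, the target being terminal, so any contradiction must come from the \emph{essential uniqueness} clause. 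That clause produces a connecting map $z\colon S_{A,B}\to S_{A',B'}$ with $z\cdot g=z\cdot g'$. But $z$ carries the collapsed rational point $A$ to the collapsed rational point $A'$ and the collapsed irrational point $B$ to $B'$, and $A'\neq B'$ in $S_{A',B'}$; hence $z\cdot g$ and $z\cdot g'$ are distinct constant maps, differing already at $x_0$ precisely because $X\neq\emptyset$. This contradiction shows that $X$ cannot be finitely presentable.

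The step I expect to be the main obstacle is recognizing that the naive strategy—factoring a map into the colimit and extracting a contradiction from the factorization's mere existence, as in Proposition \ref{P:refl}—fails here, exactly because $RC$ is terminal and the connecting maps are epic rather than monic. The entire argument must be rerouted through the uniqueness part of finite presentability, and the geometric fact that makes it work is that a rational and an irrational point of the interval stay apart at \emph{every} stage of $D$ (they merge only in the useless indiscrete colimit), so no connecting map can ever identify the two chosen constant maps.
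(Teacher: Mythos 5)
Your proof is correct and is essentially the paper's own argument: the paper likewise feeds the diagram of Example \ref{exam} into the reflection, notes that the colimit in $\C$ is a singleton because already the T$_0$-reflection merges the two points of the indiscrete space $C$, and then concludes by the constant-map/essential-uniqueness argument---except that the paper packages that last step as a citation of Proposition \ref{P:refl}. Your caveat that Proposition \ref{P:refl} does not apply verbatim here (its hypothesis requires monomorphic connecting maps, whereas the spaces $S_{A,B}$ are connected by quotient maps) is accurate and is in fact a point where your write-up is more careful than the paper's: what the paper really invokes is the finitely-presentable analogue of that proposition for arbitrary directed diagrams, whose proof is exactly the constant-map argument you spell out inline.
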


\begin{proof} The directed diagram of the example above involves only compact metrizable spaces.
	Its colimit in $\mathcal C$ is given by the reflection of the indiscrete space $C$. Since already the T$_0$-reflection merges the two points of $C$, the colimit in $\mathcal C$ must be a singleton space. The statement thus follows from Proposition \ref{P:refl}.
\qed	
\end{proof}

				\begin{example} \label{empty}
					
				\rm
					In each of the categories $\Top_0$, $\Top_1$, $\Haus$  and $\Tych$ (of Tychonoff spaces), the only finitely presentable object
					is the empty space.
				\end{example}

	\section{\small Smallness in Haus}
We have seen an easy argument showing that the empty space is the only finitely presentable object of $\Haus$.
 Relying on a beautiful and very efficient example presented by Horst Herrlich in \cite{Herrlich1969}, several years before the appearance of \cite{GU1971}, we now show that in $\Haus$ (and a number of other important reflective subcategories) the empty space is also the lone finitely generated object w.r.t. embeddings. This corrects a mistaken claim of Example II.13(3) in \cite{AHRT2002}. 
 
 Let us first formulate Herrlich's result. Since \cite{Herrlich1969} contains no proof of it, we include ours.

\begin{example}\label{Herrlichex}
	\rm
Herrlich presented a topological space $X$ that is the colimit of an $\omega$-chain of closed subspaces, each of which is a Tychonoff space; yet, $X$ fails to be Hausdorff.

 We start with a Tychonoff space $C$ which  is not normal. Thus we can choose a pair $A,B$ of disjoint non-empty closed sets that cannot be separated by open neighbourhoods. Let us consider countably many disjoint copies $C_n,A_n,B_n (n\in \Bbb{N}$) of the given triple of spaces $C,A,B$.
 Put 
$$A^*=\bigcup_{n=0}^{\infty}A_n,\; B^*=\bigcup_{n=0}^{\infty}B_n \;\text{ and }  C^*= \bigcup_{n=0}^{\infty}C_n.$$
 
  Then, with new points $a,b$, topologize the set
$$X:=\{a,b\}\cup C^*$$
by calling $U\subseteq X$ open precisely when it satisfies the following conditions:

1. $U\cap C_n$ is open in $C_n$ for all $n$;

2. if $a\in U$, then there is some $m$ with $A_n\subseteq U$ for all $n\geq m$;

3. if $b\in U$, then there is some $m$ with $B_n\subseteq U$ for all $n\geq m$.
\end{example}

\begin{lemma} \label{proof} The space  $X$ above is a colimit of the $\omega$-chain of its closed subspaces
	$$ X_m:=\{a,b\}\cup A^*\cup B^*\cup \bigcup_{n=0}^m C_n \hspace{3mm} (m<\omega).$$
Each of them is a Tychonoff space, whereas $X$ fails to be a Hausdorff space.
\end {lemma}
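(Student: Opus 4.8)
The plan is to establish the five ingredients of the statement in turn: that conditions 1--3 do define a topology on $X$, that each $X_m$ is a closed subspace, that $X$ carries the colimit (final) topology of the chain $(X_m)$, that each $X_m$ is Tychonoff, and that $X$ is not Hausdorff. I expect the Tychonoff property of the $X_m$ to be the only substantial point; the rest is bookkeeping with conditions 1--3 (that we indeed get a topology is routine from closure under unions and finite intersections).

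First I would record the subspace structure. Checking conditions 1--3 directly shows that each $C_n$ inherits its original topology as a subspace of $X$ (and likewise $A_n,B_n$), that $X\setminus X_m=\bigcup_{n>m}(C_n\setminus(A_n\cup B_n))$ is open because each $A_n,B_n$ is closed in $C_n$, so that $X_m$ is closed, and that the $X_m$ form a chain with union $X$. For the colimit I would show the given topology equals the final topology, i.e. that $U\subseteq X$ is open iff $U\cap X_m$ is open in each $X_m$. One direction is immediate from $X_m$ being a subspace. For the converse, condition 1 follows by intersecting the open set $U\cap X_n$ with the full copy $C_n\subseteq X_n$, and conditions 2--3 follow from $U\cap X_0$: an open subset of $X_0$ containing $a$ is the restriction of an $X$-open set and hence contains $A_n$ for all large $n$, and these $A_n$ already lie in $X_0$, hence in $U$.

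The core is that each $X_m$ is Tychonoff, and the key observation is that $X_m$ is a topological coproduct. A short check against conditions 1--3 shows that each $C_n$ with $n\le m$, and each $A_n,B_n$ with $n>m$, is clopen, and that $Y_m:=\{a,b\}\cup\bigcup_{n>m}(A_n\cup B_n)$ is clopen with complement $\bigsqcup_{n\le m}C_n$. Since subspaces and coproducts of Tychonoff spaces are Tychonoff and each $C_n\cong C$, $A_n\cong A$, $B_n\cong B$ is Tychonoff, it suffices to treat $Y_m$. That $Y_m$ is T$_1$ is a direct singleton-by-singleton verification. For complete regularity I would separate a point $p$ from a closed $F\not\ni p$ as follows: if $p$ lies in one of the clopen Tychonoff pieces $A_n$ or $B_n$, take a Urysohn function there and extend it by the constant $1$ across the clopen complement; and if $p=a$ (symmetrically $p=b$), note that the open set $Y_m\setminus F$ contains $a$ and hence contains every $A_n$ with $n\ge m_0$ for some $m_0$, so that $W:=\{a\}\cup\bigcup_{n\ge m_0}A_n$ is a clopen neighbourhood of $a$ inside $Y_m\setminus F$, whence the characteristic function of $Y_m\setminus W$ separates $a$ from $F$. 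In effect $a$ and $b$ have neighbourhood bases of clopen sets, so the Tychonoff property of $C$ is used only on the pieces $C_n,A_n,B_n$.

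Finally, to see that $X$ is not Hausdorff I would show $a,b$ cannot be separated: if $a\in U$ and $b\in V$ with $U,V$ open, then $A_n\subseteq U$ and $B_n\subseteq V$ for all large $n$, and fixing one such $n$ the sets $U\cap C_n$ and $V\cap C_n$ are open neighbourhoods in $C_n\cong C$ of the copies of $A$ and $B$; by the choice of $C,A,B$ these cannot be disjoint, so $U\cap V\ne\emptyset$. The main obstacle throughout is recognizing the clopen decomposition of $X_m$, which reduces complete regularity to Tychonoff-ness of $C$ together with the trivial clopen neighbourhood bases at $a$ and $b$.
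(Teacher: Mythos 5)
Your proof is correct and takes essentially the same route as the paper: the same clopen sets in $X_m$ (the $C_n$ for $n\le m$, the $A_n,B_n$ and the tail sets $\{a\}\cup\bigcup_{i\ge n}A_i$ for $n>m$), the same separation of a point from a closed set by extending a function on a clopen Tychonoff piece by the constant $1$ (resp.\ taking a characteristic function of a clopen tail neighbourhood of $a$ or $b$), and the same non-normality argument for the failure of the Hausdorff property. The only differences are organizational: you package the paper's three-case analysis as a coproduct decomposition $X_m\cong C_0\sqcup\dots\sqcup C_m\sqcup Y_m$, and you spell out the colimit (final-topology) verification that the paper leaves as ``easy to verify''.
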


\begin{proof}
One readily verifies that $X$ is a T$_1$-space, and that the following sets are closed in $X$, for all $n$:
$$C_n,\; A_n,\; B_n,\;\;
\{a\}\cup\bigcup_{i=n}^{\infty}A_i,\;\;
\{b\}\cup\bigcup_{i=n}^{\infty}B_i.$$
But $X$ is not Hausdorff: if we had open disjoint sets $U\ni a$ and $V\ni b$ in $X$, then $A_n\subseteq U$ and  $B_n\subseteq V$ for some $n$, so that $U\cap C_n$ and $V\cap C_n$ would be open disjoint neighbourhoods of respectively $A_n$ and $B_n$ in $C_n$. 

It is easy to verify that each $X_m$ is a closed subspace of $X$, and that $X$ is the colimit of the $\omega$-chain of embeddings of these
subspaces.
In contrast to the space $X$, the following sets are both closed and open in the space $X_m$, for every $m$:
$$C_n, \text{ if }n\leq m;\; A_n,\; B_n,\; \{a\}\cup\bigcup_{i=n}^{\infty}A_i,\text{ and } \{b\}\cup\bigcup_{i=n}^{\infty}B_i,\text{ if } n>m.$$
It now follows that every $X_m$ is a Tychonoff space. Indeed, considering a closed set $D$ in $X_m$ and a point $x\in X_m\setminus D$, in order to show that there is a continuous map $g:X_m\to [0,1]$ with $g(x) =0$ and $D\subseteq g^{-1}(1)$, we distinguish the following cases:

\medskip
\noindent  {\it Case} 1: $x\in C_n$, for some (and, thus, a unique) $n\leq m$: The set $C_n\cap D$ is closed in $X_m$ and, hence, also in the closed subspace $C_n$. We obtain a continuous map $f:C_n\to[0,1]$ with $f(x)=0$ and $C_n\cap D\subseteq f^{-1}(1)$ and extend $f$ to the map $g:X_m\to [0,1]$ that sends $X_m\setminus C_n$ constantly to $1$. Then, for any closed set $Z\subseteq[0,1]$, depending on whether $1\notin Z$ or $1\in Z$, the set $g^{-1}(Z)$ equals $f^{-1}(Z)$ or $f^{-1}(Z)\cup (X_m\setminus C_n)$ and is therefore closed in $X_m$ in either situation.

\medskip
\noindent {\it Case} 2: $x \in A_n$ (or, symmetrically, $x\in B_n$), for some $n>m$: The set $A_n\cap D$ is closed in $X$ and, hence, also in $C_n$, With a continuous map $f:C_n\to [0,1]$ that sends $x$ to $0$ and $A_n\cap D$ to $1$, we define $g:X_m\to[0,1]$ to coincide with $f$ on $A_n$ and to map $X_m\setminus A_n$ to $1$. For $Z$ closed in $[0,1]$, similarly as in Case 1 one obtains $g^{-1}(Z)$ to coincide with $f^{-1}(Z)\cap A_n$ or $(f^{-1}(Z)\cap A_n)\cup(X_m\setminus A_n)$,  both sets being closed in $X_m$.

\medskip
\noindent{\it Case} 3: $x=a$ (or, symmetrically, $x=b$): For the open set $(X\setminus D)\ni a$ in $X$ we have some $n>m$ with $E:=\{a\}\cup\bigcup_{i=n}^{\infty} A_i\subseteq X_m\setminus D$. Since $E$ is closed and open in $X_m$, we obtain a continuous map $g:X_m\to [0,1]$ by sending $E$ to $0$ and $X_m\setminus E$ to $1$.
\qed
\end{proof}

\begin{corollary}\label{T2fge}
	 In every full reflective subcategory $\mathcal C$ of $\Top$ contained in $\Haus$ and  containing $\Tych$, the empty space is the only finitely generated object w.r.t. embedings.
\end{corollary}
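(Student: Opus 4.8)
The plan is to mimic the proof of Theorem \ref{T0fp}, replacing the role of Example \ref{exam} by Herrlich's space of Example \ref{Herrlichex} and Lemma \ref{proof}, and replacing Proposition \ref{P:refl} by its embedding analogue announced in Remark \ref{R:refl}. First I would record the trivial direction: the empty space is finitely generated w.r.t. embeddings in any category, since its hom-functor is constant at a singleton, and a constant functor preserves directed colimits because directed index posets are non-empty and connected.

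For the converse, suppose toward a contradiction that $\C$ contains a non-empty object $G$ that is finitely generated w.r.t. embeddings. Consider the $\omega$-chain of closed embeddings $X_m\hookrightarrow X_{m+1}$ furnished by Lemma \ref{proof}, whose colimit in $\Top$ is the space $X$ of Example \ref{Herrlichex}. Each $X_m$ is a Tychonoff space, hence lies in $\C$ because $\Tych\subseteq\C$; and the connecting maps, being closed subspace inclusions, are embeddings in $\C$. Thus the chain is a directed diagram of embeddings in $\C$, and by the embedding version of Proposition \ref{P:refl} (Remark \ref{R:refl}) the reflection map $r_X\colon X\to RX$ of its $\Top$-colimit is bijective.

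The contradiction then comes from separation. Since $\C\subseteq\Haus$ and $RX\in\C$, the space $RX$ is Hausdorff. But $r_X$ is a continuous bijection, so for any two distinct points of $X$ one may pull back along $r_X$ disjoint open neighbourhoods of their (distinct) images in $RX$; hence $X$ would itself be Hausdorff. This contradicts the fact, established in Lemma \ref{proof}, that $X$ is not Hausdorff. Therefore no non-empty object finitely generated w.r.t. embeddings can exist in $\C$, and the empty space is the only one.

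I expect no serious obstacle beyond assembling these ingredients correctly: the one conceptual point is the observation in the last paragraph, namely that a continuous bijection onto a Hausdorff space forces its domain to be Hausdorff, which is precisely what converts the bijectivity of $r_X$ into a contradiction. One should also keep in mind the standard fact, already used in the proof of Proposition \ref{P:refl}, that colimits in the reflective subcategory $\C$ are computed by applying the reflector to the corresponding colimit formed in $\Top$; this is what licenses treating $RX$ as the colimit in $\C$ and underlies the cited embedding analogue of that proposition.
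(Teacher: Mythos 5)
Your proof is correct and takes essentially the same route as the paper: both apply the embedding analogue of Proposition \ref{P:refl} (Remark \ref{R:refl}) to Herrlich's chain of closed Tychonoff subspaces from Example \ref{Herrlichex} and Lemma \ref{proof}, whose colimit $X$ in $\Top$ fails to be Hausdorff. The only cosmetic difference is in phrasing the contradiction: you argue that a continuous injection into the Hausdorff space $RX$ would force $X$ itself to be Hausdorff, while the paper observes directly that the $\Haus$-reflection (and hence the $\C$-reflection) must merge the inseparable points $a,b$, so $r_X$ cannot be bijective---the same fact, stated contrapositively.
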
	
	
Indeed, apply Remark \ref{R:refl} to the above space $X$ whose reflection in $\Haus$ is not bijective: it merges
the pair $a,b$ of elements not posesing disjoint open neighborhoods. Thus the reflection in $\mathcal C$ also merges
that pair.

\begin{remark}\label{fgTychHaus}
	\em
 Since the above example uses only an $\omega$-chain, the same corollary holds for finite smallness: the only Hausdorff space finitely small w.r.t. embeddings is the empty one.
\end{remark}

\section{\small Smallness in $\Top_0$}
We know that, in $\Top_0$, the compact spaces are the objects finitely generated w.r.t. open embeddings (Corollary \ref{C:compact}.).
We now prove that finite T$_0$-spaces are precisely the objects finitely generated w.r.t. embeddings. The proof given here is not easy,
it uses substantially Ramsey's Theorem. Afterwards we prove that the empty space is the only finitely generated object of $\Top_0$.

We use below that $\Top_0$ is closed in $\Top$ under colimits of directed diagrams of embeddings. See Corollary \ref{cor}.

\begin{theorem}\label{T0fge}
	In $\Top_0$ a space is finitely generated w.r.t. embeddings precisely when it is finite.
\end{theorem}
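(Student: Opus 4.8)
The plan is to prove both implications; the forward one is routine and the converse carries all the weight. For the forward direction I would argue exactly as in Theorem \ref{T}(1). Given a finite $T_0$-space $X$ and a directed diagram of embeddings in $\Top_0$, its colimit cocone $c_i\colon Z_i\to Z$ consists of embeddings by Proposition \ref{dir}, and $\Top_0$ is closed under such colimits, so $Z=\bigcup_i c_i(Z_i)$. Any continuous $f\colon X\to Z$ has finite image, hence lands set-theoretically in a single $c_i(Z_i)$; the resulting factorization $g\colon X\to Z_i$ is automatically continuous because $c_i$ is an embedding, and unique because $c_i$ is monic. Thus $X$ is finitely generated w.r.t.\ embeddings.

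For the converse, the structural observation I would exploit is that, for a directed diagram of embeddings, factorization is purely set-theoretic: since every $c_i$ is an embedding, a continuous $f\colon X\to Z$ factors through $c_i$ precisely when $f(X)\subseteq c_i(Z_i)$. Hence it suffices to exhibit, for an infinite $T_0$-space $X$, \emph{one} continuous map out of $X$ whose image is spread across infinitely many members of a directed diagram of finite subspaces. I would reduce the whole converse to the assertion that \emph{every infinite $T_0$-space admits a continuous map, with infinite image, onto an infinite Alexandrov $T_0$-space $W$} (a poset carrying the topology whose open sets are exactly its up-sets). This reduction is clean because such a $W$ is the colimit in $\Top$, equivalently in $\Top_0$, of the directed diagram of its finite subspaces, all of which are proper, finite and $T_0$, with inclusions as embeddings (the two-point test $F=\{x,y\}$ shows that a set is open in $W$ iff it meets every finite subspace in a relatively open set). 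A map with infinite image cannot factor through any finite subspace, so its existence alone witnesses that $X$ is not finitely generated w.r.t.\ embeddings; by Corollary \ref{cor} there is then no difference from finite smallness either.

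To build such a map is where Ramsey's theorem enters. First I would note that an infinite $T_0$-space has infinitely many open sets, since otherwise each point would be described by one of only finitely many neighbourhood patterns, contradicting $T_0$. Applying the infinite Ramsey theorem to the poset of open sets ordered by inclusion, I obtain either an infinite chain or an infinite antichain of opens. In the chain case a strictly monotone $\omega$-sequence $U_0\subsetneq U_1\subsetneq\cdots$ yields at once a continuous map onto $\mathbb{N}$ with its down-topology, of infinite image, by sending $x$ to $\min\{n:x\in U_n\}$ and the (open-ly) unexhausted points to an added top element; the decreasing case is dual.

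The main obstacle is the antichain case, and this is exactly where I expect the combinatorics to become substantial. An infinite antichain of opens carries no evident map onto a chain, and the naive targets (infinite products of Sierpi\'nski spaces, or the full power-set poset) are either not Alexandrov or destroy continuity, because arbitrary intersections of the chosen opens need not be open. I would instead try to massage the antichain---passing to finite unions and intersections and invoking Ramsey again on pairs, and possibly triples, of the selected opens, or alternatively colouring pairs of \emph{points} by their $T_0$-separation type---so as to extract a homogeneous infinite subconfiguration from which a genuine Alexandrov target can be assembled while the comparison map stays continuous on all of $X$. The delicacy, and the reason a mere subspace argument does not suffice, is that the useful subspaces need be neither open nor closed, and continuous maps into Alexandrov spaces do not extend freely; keeping the witnessing map defined and continuous on the \emph{entire} space $X$, rather than on a selected subset, is the crux that forces the heavier use of Ramsey's theorem.
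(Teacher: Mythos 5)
Your forward direction is fine, and your reduction of the converse --- produce, for every infinite T$_0$-space $X$, a continuous map with infinite image into an Alexandrov T$_0$-space presented as the directed colimit of its finite subspaces --- is sound and is essentially the paper's strategy as well: the paper's target is $\omega+\{\infty\}$ with down-segments as closed (resp.\ open) sets, and its witnessing map is exactly your $x\mapsto\min\{n\mid x\in X_n\}$. The genuine gap is precisely where you say the crux is: the antichain case of your Ramsey argument is never carried out. You color pairs of \emph{open sets} by comparability under inclusion; the chain outcome you can handle, but when Ramsey hands you an infinite antichain of opens you offer only intentions (``massage the antichain'', ``invoke Ramsey again on pairs, and possibly triples'', ``extract a homogeneous infinite subconfiguration''), not an argument --- and nothing in your setup lets you avoid this outcome, so the hard half of the theorem remains unproved. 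Two smaller inaccuracies: Corollary \ref{cor} concerns monomorphisms and open embeddings, not embeddings, so it cannot be invoked to transfer the conclusion to finite smallness (the paper instead observes that its construction uses only $\omega$-chains); and your ``dual'' decreasing-chain case is not actually dual, since the preimage of the open point $\{\infty\}$ of the Alexandrov target is $\bigcap_n U_n$, which need not be open --- unions of opens are open, intersections are not.

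The paper's way out of the antichain impasse is the alternative you mention parenthetically but do not pursue: apply Ramsey to pairs of \emph{points}, colored by comparability in the specialization order. Both homogeneous outcomes are then productive. If the infinite homogeneous set consists of pairwise incomparable points, it is an infinite T$_1$ subspace $S=\{x_0,x_1,\dots\}$; each finite set $\{x_0,\dots,x_n\}$ is closed in $S$, hence is the trace on $S$ of a closed subset of $X$, and taking unions of such witnesses yields a strictly increasing $\omega$-chain of closed subsets of the whole space $X$. If instead the homogeneous set consists of pairwise comparable points, it is a chain in the specialization order, hence contains a strictly increasing or strictly decreasing $\omega$-sequence, whose initial segments are closed (resp.\ open) in the induced subspace; lifting traces and taking unions again produces a strictly increasing $\omega$-chain of closed (resp.\ open) subsets of $X$. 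In either case, your own chain-case map into $\omega+\{\infty\}$ concludes the proof. The moral of the comparison: coloring pairs of open sets makes one Ramsey alternative trivial and the other intractable, whereas coloring pairs of points by specialization makes both alternatives deliver exactly the monotone $\omega$-chain of open or closed sets that your construction needs.
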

\begin{proof}
(1)	By the Lemma above, it follows that, like in $\Top$ (Proposition \ref{fpTop}), finite T$_0$-spaces are finitely generated w.r.t. embeddings in $\Top_0$.
	Conversely, 
	we will show that an infinite T$_0$-space $X$ cannot be finitely generated w.r.t. embeddings in $\Top_0$.
	
(2)	At first, assume that $X$ contains a strictly increasing $\omega$-chain
	$$
	X_0\subset X_1\subset\dots X_n\subset\dots
	$$
	of closed sets. Put $X_{-1}=\emptyset$ and consider the map
	$$f:X\to Y:=\omega+\{\infty\},\; x\mapsto \left\{ \begin{array}{cc} n & \mbox{if } x\in X_n\setminus X_{n-1}, \\
	\infty & \mbox{otherwise} 
	\end{array}\right\} ;$$
	it becomes continuous if we topologize $Y$ by taking its down-segments as closed sets. Observe that $Y$ is a colimit of the $\omega$-chain of its subspaces $Y_n :=\{0,\dots,n\}+\{\infty\}$. But the map $f$ does not factor through any of them. Hence, $X$ is not finitely generated w.r.t. embeddings in $\Top_0$.

	Similarly, by taking down-segments as open sets in $Y$, one obtains that $X$ containing a strictly increasing chain
	$$
	X_0\subset X_1\subset \dots X_n\subset\dots
	$$
	of open sets cannot be finitely generated w.r.t. embeddings in $\Top_0$.
	
	(3) It therefore suffices to show that every infinite T$_0$-space contains either
	a strictly increasing chain of open sets or a strictly increasing chain of closed sets. 
	
	Let $\leq$ be the specialization order on $X$, {\it i.e., }
	$x\leq y$ if, and only if, every open set containing $x$ contains $y$.
	Decompose the set $[X]^2$ of all two-element subsets of $X$ into the set 
	$Z_0$ of incomparable pairs and the set $Z_1$ of comparable ones.
	Following Ramsey's Theorem (see, e.g., \cite[3.3.7]{CK}), we have an infinite
	subset $H\subseteq X$ such that either $[H]^2\subseteq Z_0$ or
	$[H]^2\subseteq Z_1$.
	
	(4) Assume $[H]^2\subseteq Z_0$. Then there is a countable set $S=\{x_0,x_1,\dots\}$ such that the subspace $S$ is T$_1$. Thus 
	$$
	\{x_0\}\subset\{x_0,x_1\}\subset\dots
	$$
	is a strictly increasing chain of closed subsets of $S$. One therefore has a closed set $X_0\subseteq X$ such that $X_0\cap S=\{x_0\}$. There is also
	a closed set $X_1'\subseteq X$ such that $X_1'\cap S=\{x_0,x_1\}$. Put
	$X_1=X_0\cup X_1'$; then $X_0\varsubsetneqq X_1$ and $X_1\cap S=\{x_0,x_1\}$.
	Continuing this procedure, we get a strictly increasing chain of closed sets $X_0\subset X_1\subset\dots$ in $X$.
	
	(5) Finally, assume  $[H]^2\subseteq Z_1$. Then $H$ is a chain, and the topology of the subspace $H\subseteq X$ is given by up-segments as open sets. Then $H$ either contains a strictly increasing chain $x_0<x_1<\dots$ or a strictly decreasing chain $x_0>x_1>\dots$. Let $S=\{x_0,x_1\dots\}$. In the first case, we have a strictly increasing chain 
	$
	\{x_0\}\subset\{x_0,x_1\}\subset\dots
	$
	of closed subsets in $S$ and, in the same way as above, we get a strictly increasing chain of closed subsets of $X$. In the second case, we get
	a strictly increasing chain 
	$
	\{x_0\}\subset\{x_0,x_1\}\subset\dots
	$
	of open subsets in $S$ and, analogously, a strictly increasing chain of open subsets of $X$.
	\qed
\end{proof}

\medskip

\medskip

Since the proof uses only $\omega$-chains, the result above yields the same assertion 
concerning finite smallness: A space in $\Top_0$ is finitely small w.r.t. embeddings if, and only if, it is finite.
\medskip

	With the help of the following example we will be able to characterize the finitely generated objects of the category $\Top_0$.
	
	\begin{example}\label{T0example}
		\rm 
		We construct a directed system of T$_0$-spaces $Z_n,\, n\in\mathbb N$, all having the same carrier set $Z$ and being connected by identity maps, such that its colimit $Z_{\infty}$ in $\Top$ fails to be T$_0$. More detailed: $Z_{\infty}$ has two points forming an indiscrete subspace (while all other points are closed). 
		
		As the common carrier set $Z$ we take two disjoint copies of the ordered set $\mathbb N\cup\{\infty\}$, which we write as
		$$ Z= \{x_i\mid i\in \mathbb N\cup\{\infty\}\}\cup\{y_i\mid i\in \mathbb N\cup\{\infty\}\}.$$
	The following system of subsets comprise a basis of a topology $\tau_n$ on the set $Z$:
		
		\medskip
		1. $\{x_k\}, \{y_k\}$, for all $k$ with $n\leq k<\infty$;
		
		2. $U_m:=\{x_m,x_{m+1},\dots,x_{\infty}\};\,V_m:=\{y_m,y_{m+1},\dots,y_{\infty}\}$, for all  $m$ with\\ 
		\indent \quad\,$n\leq m<\infty$;
		
		3. $\{x_k\}\cup V_m,\;\{y_k\}\cup U_m$, for all $k,m$ with $k<n<m<\infty$.
		
		\medskip
		\noindent To verify that this is a basis, we show that given sets $A,B$ of the system and $z\in A\cap B$, we can find some $C\ni z$ in the system with $C\subseteq A\cap B$. Assuming $z=x_k$ (for $z=y_k$ one proceedes symmetrically), in case $n\leq k<\infty$ we take $C=\{z\}$; in case $k<n$, so that $A=\{z\}\cup V_{m_1}$ and $B=\{z\}\cup V_{m_2}$, we put $C=\{z\}\cup V_m$ with $m=\mathrm{max}\{m_1,m_2\}$; and in case $k=\infty$, we may similarly take $C= U_m$ for some sufficiently large $m<\infty$.
		
		We prove that the space $Z_n =(Z,\tau_n)$ is T$_0$. For $z\neq w$ in $Z$, we may again assume $z=x_k$. In case $n\leq k<\infty$, the point $z$ is isolated, and by symmetry we may now exclude also the cases $w= x_{\ell}$ or $w = y_{\ell}$ with $n\leq\ell<\infty$ from further consideration. In case $k=\infty$, we have $w\notin U_n\ni z$, and in case $k<n$ and $w= x_{\ell}$ or $w = y_{\ell}$ with $\ell<n$, we have $w\notin\{z\}\cup V_{n+1}$.
		
		Next we show that the topology $\tau_{n+1}$ is coarser than $\tau_n$. Let $A$ be a basic open set in $\tau_{n+1}$. If $A$ is of type 1 or 2, $A$ is clearly also open in $\tau_n$. If $A$ is of type 3, so that by symmetry we may assume $A= \{x_k\}\cup V_m$ with $k<n+1<m$, then it suffices to note that the sets $\{x_k\}$ and $V_m$ are both basic open sets of $\tau_n$, whence their union is open in $\tau_n$ as well.
		
		The colimit of the $\omega$-chain $Z_n$ is the space $Z_\infty =(Z,\tau _{\infty}$) for the finest topology $\tau_{\infty}$ coarser than every $\tau_n$. Its subspace $\{x_{\infty},y_{\infty}\}$ is indiscrete. Indeed, 
		it suffices to show that every $\tau_{\infty}$-open set $A$ containing $x_{\infty}$ contains also $y_{\infty}$. For $n=0$, as a $\tau_0$-open neighbourhood of $x_\infty$, $A$ must contain some set $U_m$ with $m>0$ and, hence, the point $x_m$. In fact, being open also in $\tau_{m+1}$, as a neighbourhood of $x_m$, $A$ must then contain some basic $\tau_{m+1}$-open set $\{x_m\}\cup V_{\ell}$ and, hence, the point $y_{\infty}$.
		
		We may leave it to the reader to check that all points $x_k, y_k \;(k<\infty)$ are closed in $Z_{\infty}$, as this property of the space will not be used in what follows.
	\end{example}
	
	\medskip
	
	\begin{corollary}\label{T0fg}
		The empty space is the only finitely generated object of the category $\Top_0$.
	\end{corollary}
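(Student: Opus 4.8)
The plan is to deploy Example \ref{T0example} through the reflection criterion of Proposition \ref{P:refl}, in exact analogy with the way Corollary \ref{T2fge} and Theorem \ref{T0fp} were obtained. First I would observe that the $\omega$-chain $(Z_n)_{n\in\mathbb N}$ constructed in Example \ref{T0example} is a directed diagram of \emph{monomorphisms} in $\Top_0$: each connecting map is the identity map on the common carrier set $Z$, passing from the finer topology $\tau_n$ to the coarser topology $\tau_{n+1}$, hence a continuous bijection and therefore monic in $\Top$, and so also in the full subcategory $\Top_0$. Its colimit in $\Top$ is the space $Z_\infty=(Z,\tau_\infty)$.

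Next I would record that $Z_\infty$ fails to be a T$_0$-space, so that its reflection map into $\Top_0$ is not bijective. Since $\Top_0$ is a full reflective subcategory of $\Top$, with reflector the Kolmogorov quotient that identifies topologically indistinguishable points, it suffices to exhibit two distinct points with identical open neighbourhoods. The verification in Example \ref{T0example} already shows that every $\tau_\infty$-open set containing $x_\infty$ also contains $y_\infty$; by the symmetry of the construction the converse holds as well, so $x_\infty$ and $y_\infty$ are topologically indistinguishable in $Z_\infty$. Hence the reflection map $r_{Z_\infty}:Z_\infty\to RZ_\infty$ merges the distinct points $x_\infty,y_\infty$ and, in particular, is not injective.

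With these two facts in hand the corollary follows by contradiction from Proposition \ref{P:refl}. Were $\Top_0$ to contain some non-empty finitely generated object, then applying that proposition to the directed diagram $(Z_n)_{n\in\mathbb N}$ of monomorphisms above, whose colimit in $\Top$ is $Z_\infty$, would force $r_{Z_\infty}$ to be bijective---contradicting the non-injectivity just established. Therefore no non-empty T$_0$-space is finitely generated. Since the empty space is trivially finitely generated (its hom-functor is constant at a one-element set and hence preserves every directed colimit), it is the only finitely generated object of $\Top_0$.

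I expect the one genuinely delicate point---already settled inside Example \ref{T0example}---to be the passage from \emph{``the subspace $\{x_\infty,y_\infty\}$ is indiscrete''} to the stronger \emph{``$x_\infty$ and $y_\infty$ are topologically indistinguishable in all of $Z_\infty$''}. It is this latter statement about neighbourhoods in the whole space, rather than mere indiscreteness of the subspace, that makes the reflection non-bijective and thereby drives the argument; everything else is a direct invocation of the reflection principle.
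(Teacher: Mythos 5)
Your proof is correct and is essentially the paper's own argument: the paper likewise applies Proposition \ref{P:refl} to the $\omega$-chain of monomorphisms from Example \ref{T0example}, noting that the $\Top_0$-reflection of the colimit $Z_\infty$ merges $x_\infty$ with $y_\infty$ and hence is not bijective. The only remark worth making is that your closing ``delicate point'' is automatic rather than delicate: indiscreteness of the subspace $\{x_\infty,y_\infty\}$ means by definition that no open set of $Z_\infty$ meets this pair in exactly one point, which is precisely the topological indistinguishability of $x_\infty$ and $y_\infty$ in the whole space.
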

	
	Indeed, in the above example the $\Top_0$-reflection map of the colimit $Z_\infty$ in $\Top$ merges $x_\infty$ with $y_\infty$. Thus we can apply Proposition \ref{P:refl}.

\medskip

\section{{\small 
		Smallness in $\Top_1$}}

We now turn to the category $\Top_1$, and prove that finitely generated objects w.r.t. embeddings are precisely the finite spaces, while the
 objects finitely small w.r.t. embeddings are precisely the countable compact spaces. Thus, in this category the difference between
 considering chains  of subspaces or directed systems of subspaces is dramatic.

\begin{remark} \label{ulra}

\rm
(1) We first take note of an obvious, but important fact: $\Top_1$ is closed in $\Top$ under colimits of directed diagrams of monomorphisms. 
In particular, the colimit maps are monic.

(2) We next recall that, given an element $x$ of a set $X$ and a free ultrafilter $\mathcal{U}$ on $X$ ({\it i.e.}, not containing finite sets),
the corresponding \emph{ultraspace} on the set $X$ has all elements but $x$ isolated, and the neighborhoods of $x$ are precisely the sets $U \cup \{x\}$ for members $U$ of $\mathcal{U}$. The ultraspaces for non-principle ultrafilters (not containing finite sets) are precisely the co-atoms of the lattice of all T$_1$-topologies on $X$: the only properly finer topology is the discrete one. Every T$_1$-topology is an infimum of co-atoms (see \cite {Fr} or \cite{Steiner}, 1.1).

\end{remark}

\begin{theorem}\label{T1fg}
A space in $\Top_1$ is finitely generated  if, and only if, it is finite.
\end{theorem}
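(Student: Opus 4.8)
The plan is to prove both implications, the first being routine and the second resting on the ultraspace structure recalled in Remark \ref{ulra}. Throughout I use that $\Top_1$ is closed in $\Top$ under directed colimits of monomorphisms (Remark \ref{ulra}(1)), so that these colimits may be computed in $\Top$.

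\medskip\noindent\textbf{The easy direction.} A finite $T_1$-space is discrete, and a finite discrete space is finitely generated --- indeed finitely presentable --- by the argument of Proposition \ref{fpTop}: a map out of it factors at the level of sets through a single stage, and the factorization is automatically continuous. Since the relevant colimits in $\Top_1$ agree with those in $\Top$, this transfers verbatim.

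\medskip\noindent\textbf{The hard direction.} Suppose $X=(X,\tau)$ is an infinite $T_1$-space that is finitely generated; I will derive a contradiction from \emph{two} directed diagrams of monomorphisms. The first is a topological obstruction built from co-atoms. By Remark \ref{ulra}(2), $\tau$ is the intersection of the family $\mathcal A$ of all ultraspace topologies finer than it. Index by the finite subsets $F\subseteq\mathcal A$ ordered by inclusion and put $\tau_F=\bigcap_{\sigma\in F}\sigma$; each $\tau_F$ is $T_1$ (an intersection of $T_1$-topologies), and for $F\subseteq F'$ the identity map $(X,\tau_F)\to(X,\tau_{F'})$ is a monomorphism since $\tau_{F'}\subseteq\tau_F$. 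The colimit of this diagram is $(X,\bigcap_{\sigma\in\mathcal A}\sigma)=(X,\tau)$ with identity colimit maps, so finite generation forces the identity $(X,\tau)\to(X,\tau)$ to factor through a stage, i.e. $\tau_F\subseteq\tau$ for some finite $F$; as always $\tau\subseteq\tau_F$, this yields $\tau=\bigcap_{\sigma\in F}\sigma$, a \emph{finite} intersection of ultraspace topologies.

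\medskip\noindent\textbf{Extracting a closed discrete set of isolated points.} Write the co-atoms in $F$ as ultraspaces centred at (distinct, after dropping repetitions) points $x_1,\dots,x_n$ with non-principal ultrafilters $\mathcal U_1,\dots,\mathcal U_n$. Every point other than the $x_i$ is isolated in each $\sigma$, hence in $\tau$, so $X$ has only finitely many non-isolated points. The main obstacle is to produce an \emph{infinite set $S$ of isolated points that is closed in $X$}. It suffices to find an infinite $S\subseteq X\setminus\{x_1,\dots,x_n\}$ with $S\notin\mathcal U_i$ for all $i$: then $(X\setminus S)\setminus\{x_i\}=S^{c}\setminus\{x_i\}\in\mathcal U_i$, so $X\setminus S$ is a $\sigma_i$-neighbourhood of $x_i$ and is $\sigma_i$-open for each $i$, whence $\tau$-open, so $S$ is closed. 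Such $S$ exists by the combinatorial fact that the intersection of finitely many free ultrafilters strictly contains the Fr\'echet filter: choose pairwise disjoint $A_i\in\mathcal U_i$, split each as $A_i=A_i'\sqcup A_i''$ into two infinite halves with $A_i'\in\mathcal U_i$, and set $S=\bigcup_i A_i''$; then $S\cap A_i=A_i''\notin\mathcal U_i$ forces $S\notin\mathcal U_i$. (When $\tau$ is discrete, i.e. $n=0$, any countably infinite subset serves as $S$.)

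\medskip\noindent\textbf{The set-theoretic obstruction.} Take $S=\{s_1,s_2,\dots\}$ as above, let $Z$ be the discrete space on $\mathbb N\cup\{*\}$, and realize it as the colimit of its finite subspaces under inclusion --- a directed diagram of monomorphisms in $\Top_1$. The map $f\colon X\to Z$ given by $f(s_k)=k$ and $f(x)=*$ for $x\notin S$ is continuous, since every fibre is open: $\{s_k\}$ because $s_k$ is isolated, and $X\setminus S=f^{-1}(*)$ because $S$ is closed. But $f(X)\supseteq\mathbb N$ is infinite, so it lies in no finite stage; hence $f$ admits no factorization, contradicting finite generation. I expect the delicate point to be the ultrafilter extraction in the previous paragraph (and the correct set-up of the co-atom diagram), the rest being routine.
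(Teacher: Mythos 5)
Your proof is correct and its core is the same as the paper's: both arguments reduce, via Remark \ref{ulra}, to the statement that a finitely generated T$_1$-topology must be a finite infimum of ultraspace (co-atom) topologies, and then, for infinite $X$, derive a contradiction by producing an infinite set of isolated points avoiding the finitely many ultrafilters involved. Two local differences are worth recording. First, where the paper constructs the avoiding set by induction on the number of ultrafilters, you construct it directly by choosing pairwise disjoint members $A_i\in\mathcal U_i$ and splitting each into halves. This is your one real slip: distinct co-atoms in $F$ need not have distinct ultrafilters (two ultraspaces may share the ultrafilter and differ in the centre, just as they may share the centre and differ in the ultrafilter, so your parenthetical about dropping repeated \emph{points} does not settle this), and pairwise disjoint $A_i\in\mathcal U_i$ cannot be chosen when $\mathcal U_i=\mathcal U_j$ with $i\neq j$. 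The fix is one line: pass first to the distinct ultrafilters among $\mathcal U_1,\dots,\mathcal U_n$, since the condition $S\notin\mathcal U$ depends only on the ultrafilter; also remove the finitely many centres from $S$ (or choose the $A_i$ inside $X\setminus\{x_1,\dots,x_n\}$ from the start) so that $S$ really consists of isolated points. Second, your final contradiction maps $X$ onto the countable discrete space $\mathbb N\cup\{*\}$, presented as the directed colimit of its finite subspaces, whereas the paper stays inside $X$ and uses the directed system of open subspaces $(X\setminus A)\cup Z$, for $Z\subseteq A$ finite, whose colimit is $X$ itself; both finishes work, and yours needs only that $S$ is closed and consists of isolated points. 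One caution on the easy direction: only finite \emph{generation} transfers verbatim from $\Top$ to $\Top_1$, because only directed colimits of monomorphisms are computed as in $\Top$; your parenthetical ``indeed finitely presentable'' must be read as a statement about $\Top$, since in $\Top_1$ no non-empty space is finitely presentable (Theorem \ref{T0fp}).
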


\begin{proof}
	 Let the T$_1$-space $(X,\tau)$ be finite and thus discrete. Since it is finitely generated in $\Top$, it is finitely generated in $\Top_1$,
	by the above-mentioned closure property.
	
	Conversely, let $(X,\tau)$ be a finitely generated object of $\Top_1$. Then $\tau$ is a member of the lattice of all T$_1$-topologies on the set $X$, and we will use properties of that lattice to prove that $X$ is finite. 
	
We denote by $\mathsf M$ a set of ultraspaces on $X$ with infimum $\tau$ (see Remark \ref{ulra}). Let $\mathsf M'$ be the closure of $\mathsf M$ under finite
infima in the lattice of all T$_1$-topologies. We obtain a directed diagram of all the spaces given by $\mathsf M'$ with
$\id_X$ as connecting maps. Its colimit is  $(X, \tau)$. Since this space is finitely generated, $\mathsf M'$ is finite and $\tau$ is a member of  
$\mathsf M'$. Thus $(X, \tau)$ has only finitely many non-isolated points.

Assuming that $X$ is infinite, we derive a contradiction. The set $Y$ of all isolated points in $(X, \tau)$ is cofinite. There exists an infinite set $A \subseteq Y$ which is not a 
member of any ultrafilter in $\mathsf M'$. Indeed, if $\mathsf M'$ consists of $n$ ultraspaces, we verify this by induction on $n$. For 
$n=1$ we have a single ultrafilter $\mathcal U$. Decompose X into two infinite subsets; one of them does not lie in $\mathcal U$. For the
induction step, let $A$ have the desired property w.r.t. to all members of $ \mathsf M'$ but one, say $\mathcal U _0$; then decompose $A$
into two infinite sets and choose that subset which does not belong to $\mathcal U _0$.

The set $A$ is closed in $(X,\tau)$. Thus, for every finite set $Z \subseteq A$, we have the open subspace
$Z^*=(X \setminus A) \cup Z$ of $(X, \tau)$. Form the directed diagram of all these subspaces (with $Z$ ranging over finite subsets of $A$); its colimit is $(X,\tau)$, and the identity map does not factorize through any $Z^*$. Thus
$(X,\tau)$ is not finitely generated, a contradiction. 
\qed
	\end{proof}

\medskip
For  a characterization of T$_1$-spaces finitely generated w.r.t. embeddings, we apply the following example.

\begin{example} 
	\rm
For every infinite set $X$ we present a directed
	diagram $D$ of proper subspaces of the space $X_{\mathrm{cof}}$ with the cofinite topology
	whose colimit is $X_{\mathrm{cof}}$ (with embeddings as colimit injections).
	
	Let $\mathcal B$ be a disjoint decomposition of $X$ into an  infinite number of infinite subsets.
	We recall from \cite{Comfort} that a collection $\mathcal A$ of infinite subsets of $X$ is \emph{almost disjoint}  if 
the intersection of any distinct pair in $\mathcal A$ is finite. By Zorn's Lemma
	we can choose a maximal almost disjoint collection $\mathcal A$ containing $\mathcal B$. Without loss of generality, we may assume that $\mathcal A$ covers $X$ (if not, we switch from $X$ to the union of members of $\mathcal A$
	having the same cardinality as $X$.)

	Our diagram $D$ has as objects all spaces $Y_{\mathrm{\mathrm{cof}}}$, where
	$Y$ is a finite union of members of $\mathcal A$. Connecting morphisms are all
	embeddings between these spaces. Each such $Y=A_1 \cup...\cup A_n$  is a proper subset of $X$: indeed,
we can choose a member $B \in \mathcal B$ distinct from $A_i$ for $i=1,...,n$.
	Assuming that $Y=X$, since each intersection $B \cap A_i$ is finite, we conclude that $B$ is finite, a contradiction.
	
	To prove that $X_{\mathrm{\mathrm{cof}}}$ is the colimit of
	$D$ means to verify that
	a proper subset $P\subset X$ is finite whenever its intersection with each
	$Y$ in $D$ is closed ({\it i.e.}, either the intersection is finite, or $Y\subseteq P$).
	
	Suppose $P$ is infinite. Then, by the maximality of $\mathcal A$, the set $P\cap A$ is infinite for some $A\in\calA$,
	thus $P$ contains $A$. Take $x\in X\setminus P$. There is some $B\in\calA$ containing $x$.
	Then $P\cap(A\cup B)$ is not closed in $(A\cup B)_{\mathrm{cof}}$, which is an object of $D$, a contradiction.
		
\end{example}

	Surprisingly, $\Top_1$ does contain infinite spaces $X$ which are finitely small w.r.t. embeddings.
Recall that this means that every morphism from $X$ into a colimit of a continuous chain of subspaces 
factorizes through one of the subspaces. Indeed, essential uniqueness is automatic 
since (as already remarked above) the colimit maps are monic.

\begin{theorem}\label{T1emb}
	A space in $\Top_1$  is
	\begin{itemize}
	
	\item[{\rm(1)}] {Finitely generated w.r.t.,  embeddings if, and only if, it is finite.}
	 
	 \item[{\rm(2)}] {Finitely small w.r.t. embeddings if, and only if, it is countable and compact.}	
\end{itemize}
\end{theorem}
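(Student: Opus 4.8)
The plan is to treat the two items separately, using throughout that in $\Top_1$ every directed colimit of monomorphisms is computed as in $\Top$, with monic colimit cocone (Remark \ref{ulra}(1)); thus for all of these factorization problems essential uniqueness is automatic and only the \emph{existence} of a factorization must be produced. For item (1), the implication from finiteness is immediate: a finite $\Top_1$-space is discrete, hence finitely generated in $\Top$ by Proposition \ref{fpTop}, and the closure of $\Top_1$ under directed colimits of embeddings transports this to $\Top_1$. For the converse, since the indiscrete witness used in Theorem \ref{T}(1) is unavailable in $\Top_1$, I would instead feed in the preceding (almost-disjoint) example: for infinite $X$ it exhibits a directed diagram $D$ of proper subspaces of $X_{\mathrm{cof}}$ with colimit $X_{\mathrm{cof}}$, and as any infinite $\Top_1$-space $(X,\tau)$ has $\tau$ finer than the cofinite topology, the identity $(X,\tau)\to X_{\mathrm{cof}}=\mathrm{colim}\,D$ is continuous yet factors through no member of $D$ (each has a proper subset of $X$ as carrier). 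Hence no infinite space is finitely generated w.r.t. embeddings.

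For the easy half of item (2), that a countable compact $X$ is finitely small w.r.t. embeddings, I would, given a continuous chain of embeddings with colimit $Z$, pass to a cofinal subchain of length the regular cardinal $\kappa=\mathrm{cf}$ of the original, so that $Z=\bigcup Z_i$ is an increasing union of subspaces (Proposition \ref{dir}) and $Z\in\Top_1$. If $\kappa>\aleph_0$, then $f(X)$ is countable and the supremum of the countably many indices it meets is $<\kappa$, so $f(X)\subseteq Z_i$ for one $i$ and $f$ factors. The substantive case is $\kappa=\aleph_0$, handled by contradiction: were $f(X)$ cofinal in the levels, I could pick $a_k\in X$ with values $p_k=f(a_k)$ in strictly increasing levels, so $\{p_k\}$ meets each $Z_n$ finitely and is therefore closed and discrete in the T$_1$-space $Z$. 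Compactness gives an accumulation point $x^\ast$ of the infinite set $\{a_k\}$, continuity forces $f(x^\ast)\in\overline{\{p_k\}}=\{p_k\}$, say $f(x^\ast)=p_m$, and isolating $p_m$ and pulling back, T$_1$-separation lets me shrink to a neighbourhood of $x^\ast$ meeting $\{a_k\}$ only in $a_m$ --- contradicting accumulation. Thus $f(X)\subseteq Z_n$ and $f$ factors; note this case uses compactness alone.

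For the forward direction of item (2), compactness is cheap: every open embedding is an embedding, so a space finitely small w.r.t. embeddings is finitely small w.r.t. open embeddings, which in $\Top_1$ is equivalent to compactness by Corollary \ref{cor} together with Corollary \ref{C:compact}. It remains to prove the countability bound, and this is where I expect the real work to lie.

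To show a finitely small (hence compact) $X$ is countable, I would argue contrapositively: assuming $|X|\ge\aleph_1$, manufacture a continuous chain of embeddings whose colimit $Z$ admits a continuous $f:X\to Z$ factoring through no stage. The countable-cofinality case above shows such a chain must have uncountable cofinality, so I would take length $\omega_1$; and since an open chain would swallow the compact image $f(X)$ in one level, the stages must be \emph{non-open} embeddings, with the colimit exhibiting genuinely non-Hausdorff, cofinite-like behaviour at the points where $f(X)$ accumulates (for connected $X$ this forces the image to be a non-metrizable continuum spread cofinally over the $\omega_1$ levels, so the target cannot be Hausdorff). \textbf{The main obstacle} is precisely that the defining continuity of the chain must be honoured at the countably-cofinal limit ordinals below $\omega_1$, at which colimits of embeddings of cofinite or one-point-compactification type unavoidably refine toward discreteness; the construction must arrange the neighbourhood filters so that the relevant accumulation points remain non-isolated across these limit stages while every stage stays a proper subspace. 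I would handle this by prescribing an explicit basis on a fixed T$_1$ carrier --- in the spirit of Examples \ref{Herrlichex} and \ref{T0example} --- tailored to $X$ through a chosen $\omega_1$-indexed family of its points, verify continuity of the chain limit stage by limit stage, and check that the resulting map out of the compact $X$ spreads its image cofinally. As essential uniqueness is automatic, a single such non-factoring map suffices, and I anticipate the verification of continuity at the $\omega$-cofinal limit stages to be the most delicate point of the whole theorem.
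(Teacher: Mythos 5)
Your handling of item (1), of the implication from ``countable and compact'' to ``finitely small'', and of the compactness half of the converse coincides in substance with the paper's proof: item (1) is obtained there from Theorem \ref{T1fg} together with the almost-disjoint-family example, exactly as you propose; the backward direction of (2) is split by the cofinality of the chain as you do, the paper finishing the countable-cofinality case by noting that $f^{-1}(A)$ is compact, so $A=f(f^{-1}(A))$ would be an infinite compact discrete space (your accumulation-point variant is the same argument); and compactness of a finitely small space is deduced from Theorem \ref{T} via Corollaries \ref{cor} and \ref{C:compact}, as in your proposal.

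The genuine gap is the countability half of the forward direction of (2). You never construct the required witness: you describe a program (an $\omega_1$-indexed chain with an explicit basis ``tailored to $X$'', continuity to be verified limit stage by limit stage) and explicitly defer its execution, so for this part nothing is proved. The paper, by contrast, dispatches it in two lines: for uncountable $X$, Remark \ref{conti} presents the cofinite space $X_{\mathrm{cof}}$ on the carrier of $X$ as the union of a chain of proper subspaces of smaller cardinality, each with its cofinite topology, and the identity $X\to X_{\mathrm{cof}}$ is continuous but cannot factor through a proper subspace.

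That said, the obstacle you flag is not an artifact of your plan; it is exactly what the paper's quick argument overlooks. At a limit stage $j$ of countable cofinality, the colimit of the cofinite subspaces $(X_i)_{\mathrm{cof}}$, $i<j$, is strictly finer than $(X_j)_{\mathrm{cof}}$: any infinite set meeting each $X_i$ in a finite set is closed in the colimit but is neither finite nor all of $X_j$. So the paper's chain is not continuous, and as written it witnesses only failure of finite generation w.r.t.\ embeddings (where no continuity is demanded), not failure of finite smallness. Worse, the obstruction appears unfixable: if $X$ is the one-point compactification of an uncountable discrete set $D$, then $X$ appears to be finitely small w.r.t.\ embeddings in $\Top_1$, by the following argument. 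For chains of countable cofinality your compactness argument yields a factorization. For a continuous chain $(Z_i)_{i<\alpha}$ of embeddings with $\mathrm{cf}(\alpha)>\omega$ and a continuous $f:X\to Z$ factoring through no stage, choose points $p_n\in f(X)$ lying in stages of strictly increasing index, all beyond the stage containing $f(\infty)$ (so $p_n\in f(D)$), and let $j^{*}<\alpha$ be the supremum of these indices. By continuity of the chain at $j^{*}$, the infinite set $A=\{p_n\}$ meets each $Z_i$, $i<j^{*}$, in a finite, hence closed, set, so $A$ is closed in $Z_{j^{*}}$; since $Z_{j^{*}}$ is embedded in $Z$ (Proposition \ref{dir}), there is a $Z$-open $V$ with $V\cap Z_{j^{*}}=Z_{j^{*}}\setminus A$, a neighbourhood of $f(\infty)$ disjoint from $A$. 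But $f^{-1}(V)$ is open and contains $\infty$, hence is cofinite in $X$, which forces $V$ to contain all but finitely many points of $f(D)\supseteq A$ --- a contradiction. So your instinct that the delicacy lies precisely at the countably-cofinal limit stages is sound, but the correct conclusion is not that a clever construction is needed; it is that for such $X$ no construction can exist, and the countability claim of the theorem, together with the paper's proof of it, needs to be revisited.
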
 

\begin{proof}
(1) If  a space is finite, then by Theorem \ref{T1fg} it is finitely generated w.r.t. embeddings.
Conversely, if $X$ is finitely generated w.r.t. embeddings, it is finite, which follows from the above example.
Indeed, assume that $X$ is infinite and
 let $X_{\mathrm{cof}}$ be the space with cofinite topology on same carrier set as $X$. Then the identity map $X\to X_{\mathrm{cof}}$ cannot be factorized through any proper subspace of $X_{\mathrm{cof}}$.

(2a) Sufficiency: Every finitely small space w.r.t. embeddings is countable and compact. Indeed, an uncountable T$_1$-space $X$ 
is  not small w.r.t. embeddings because the space $X_{\mathrm{cof}}$  is a union of a continuous chain of proper 
subspaces (Remark \ref{conti}). And the  continuous map $\id: X \to X_{\mathrm{cof}}$ does not factorize through any of the subspaces. Compactness follows from Theorem \ref{T}.

(2b) Necessity:
Let $X$  be countable and compact. We prove that it is finitely small w.r.t. embeddings. Let $\alpha$ be a limit ordinal and 
$c_i: Z_i \to Z \;  (i<\alpha) $ a colimit of a continuous chain of subspaces. Given a continuous map $f: X \to Z$, we verify that it factorizes through some $c_i$. (Essential uniqueness comes for free since all $c_i$ are embeddings by Proposition \ref{dir}.)

(i) This is clear if the cofinality of $\alpha$ is uncountable:  since $X$ is countable, in $\Set$, there exists a factorization $f=c_i \cdot f'$ for some $i$.
Since $c_i \cdot f'$ is continuous and $c_i$ is an embedding, it follows that $f'$ is also continuous. 

(ii) In case $\alpha$ has countable cofinality, our chain has a cofinal subchain indexed by $\omega$. Thus we just need to prove that 
our statement holds for $\alpha = \omega$. Put $Z= \mathrm{colim} Z_n$. Assuming that $f$ does not factorize through $Z_n$ for any $n \in \omega$, we derive a contradition. Since $f$ does not factorize through $Z_0$,
we have some $x_0\in X$ and $n_0>0$ with $a_0:=f(x_0)\in Z_{n_0}\setminus Z_0$. Next $f$ does not factorize through $Z_{n_0}$, thus we obtain  some $x_1\in X$ and $n_1>n_0$ with $a_1:=f(x_1)\in Z_{n_1}\setminus Z_{n_0}$, {\em etc.} 
Continuing inductively, we get an infinite subset $A=\{a_0, a_1, \dots\}$ of $Z$ with $a_i\in Z_{n_{i+1}}\!\setminus\! Z_{n_i},\, n_{i+1}>n_i,\, i=0, 1,\dots$. By construction, all the sets $A\cap Z_n$ are finite, hence, closed in the T$_1$-space $Z_n$. This proves that $A$ is closed in the colimit $Z$. For the same reason,
every subset  $B \subseteq A$ is closed. Therefore $A$ is a discrete. The set $f^{-1}(A)$ is closed in $X$, hence it is compact. However, this implies that $A$ is compact (a continuous image of a compact set), in contradiction to being infinite and discrete.
\qed
\end{proof}

\medskip

	\section{\small Smallness in full reflective subcategories of $\Top$}	
	
	The results of the previous sections may in part be used to establish characterizations of smallness in a wide range of subcategories $\calC$ of $\Top$, beyond just $\Top$ or $\Top_i\;(i=0,1,2)$. Throughout this section, we assume that
	
	\medskip
	
	{\it $\C$ is a full reflective subcategory of $\Top$ closed under isomorphisms, and 
		
		the two-point discrete space $D$ lies in $\C$}. 
	
	\medskip
	
	\noindent Of course, as a full reflective subcategory, $\C$ is closed under retracts and products in $\Top$; in particular, the singleton space $1$ lies in $\C$. With the hypothesis $D\cong1+1\in\C$ one actually has every finite discrete space in $\C$. We do {\it not} assume that $\C$ be {\it hereditary} ({\it i.e.}, be closed under subspaces), a property which, as we recall, holds precisely when,  for all topological spaces $X$, the $\C$-reflection map $r_X:X\to RX$ is surjective, {\it i.e.}, if $\C$ is {\it epi-reflective} in $\Top$.
	
	Our discussion of smallness in $\cal C$ now branches into the consideration of two disjoint cases: $\calC$ contains a two-point indiscrete space $T$, or not. The containment of $T$ in $\calC$ is easily seen to be equivalent to the bijectivity of all reflection maps (since a reflective subcategory with all reflections monic makes these automatically epimorphic), in which case $\calC$ actually contains all indiscrete spaces. Consequently, the non-containment of $T$ in $\calC$ means equivalently that there are spaces with non-bijective reflection maps. We start the discussion with the latter case, before turning to the former.

	\subsection{\small Reflective subcategories with some non-bijective reflections}
	As noted above, our reflective subcategories $\calC$ of $\Top$ falling under this heading, while containing $D$, will {\it not} contain $T$. If $\C$ is epi-reflective and the Sierpi\'{n}ski space $S$ ({\it i.e.}, a two-point space not homeomorphic to $D$ or $T$) lies in $\calC$, then $\calC=\Top_0$; indeed, every T$_0$-space embeds into a power of $S$, and every non-T$_0$-space has an indiscrete two-element subspace. (However, we note the known fact that there are proper subcategories of $\Top_0$ containing $S$ that are epi-reflective in $\Top_0$; see \cite{Skula1969}.) If $S\notin \calC$, then $\calC$ must be contained in  the category $\Top_1$, since any non-T$_1$-space contains a subspace that is either homeomorphic to the spaces $T$ or $S$, both of which are retracts of any larger space.
	
	Our goal is to extend Corollary \ref{T0fg}, asserting that the empty space is the only finitely generated object in $\Top_0$, from that category to a very large array of reflective subcategories $\calC$ of $\Top$. By necessity, such array must exclude the category $\Top_1$, since we have identified its finitely generated objects as the finite discrete spaces (Theorem \ref{T1fg}). In what follows, we restrict our attention to subcategories of $\Top_1$, omitting non-hereditary subcategories of $\Top_0$ containing $S$.
	
	\begin{theorem}\label{fg-epi-not-bi}
		Let $\calC$ be a reflective subcategory of $\Top$ and properly contained in $\Top_1$. Then the only finitely generated object in $\C$ is the empty space, provided that $\calC$ contains every space whose topology is a finite infimum of ultrafilter topologies on its underlying set.
		
	\end{theorem}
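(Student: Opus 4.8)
The plan is to argue by contradiction, assuming that $\calC$ contains a non-empty finitely generated object, and to derive from this that $\calC=\Top_1$. The engine is Proposition \ref{P:refl}: as soon as a non-empty finitely generated object is present, the reflection map $r_Z\colon Z\to RZ$ is bijective for \emph{every} space $Z$ occurring as the colimit in $\Top$ of a directed diagram of monomorphisms from $\calC$. My first step is to feed a very large supply of such diagrams into this proposition. Recall from Remark \ref{ulra} that every T$_1$-topology $\tau$ on a set $X$ is the infimum of some set $\mathsf M$ of ultrafilter topologies. Ordering the finite subsets $F\subseteq\mathsf M$ by inclusion and equipping $X$ with the infimum topology $\inf F$ yields a directed diagram whose connecting maps are identities (hence monomorphisms) and whose objects $(X,\inf F)$ all lie in $\calC$ by hypothesis. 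A direct computation shows its colimit in $\Top$ to be exactly $(X,\tau)$, since the colimit topology is $\bigcap_F\inf F=\inf\mathsf M=\tau$. Consequently every T$_1$-space is such a colimit, and Proposition \ref{P:refl} forces $r_Y$ to be bijective for every T$_1$-space $Y$.

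The second step converts this bijectivity into the equality $\calC=\Top_1$. First, the reflection of any cofinite space $X_{\mathrm{cof}}$, being bijective by the previous step, can only coarsen its topology; as the cofinite topology on an infinite set is the coarsest T$_1$-topology, this reflection is a homeomorphism, so every cofinite space lies in $\calC$. Next I would show that \emph{every} T$_1$-space $(X,\tau)$ embeds into a product of cofinite spaces: for each closed set $C\subseteq X$ collapse $C$ to a single point while leaving $X\setminus C$ untouched, obtaining a map $q_C$ into a cofinite space whose fibres are all closed, so that $q_C$ is continuous and satisfies $q_C^{-1}(\text{open})=X\setminus C$. The family of all such $q_C$ is point-separating and initial, so the induced map $e\colon (X,\tau)\to P:=\prod_C Y_C$ is an embedding into a product $P$ of cofinite spaces; since $\calC$ is reflective, hence closed under products, $P\in\calC$. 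Finally, factoring the embedding as $e=\bar e\cdot r_Y$ and using that $r_Y$ is bijective while $e$ carries the initial topology forces $r_Y$ to be a homeomorphism, i.e.\ $Y\in\calC$. Thus $\Top_1\subseteq\calC$, contradicting proper containment, and the theorem follows.

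I expect the embedding of an arbitrary T$_1$-space into a product of cofinite spaces to be the crux of the argument, together with the observation that the collapsing maps $q_C$ are continuous into the cofinite topology precisely because all their fibres are closed. A secondary point demanding care is that $\calC$ is \emph{not} assumed to be hereditary, so I cannot simply invoke closure under subspaces; instead I route the final step through an honest embedding into a product that already lies in $\calC$ and let the factorization property of the reflection do the work. The colimit computation in the first step is routine but must be checked against the correct (coarsening) direction of the connecting identity maps.
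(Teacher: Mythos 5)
Your proof is correct and follows essentially the same route as the paper's: both hinge on Proposition \ref{P:refl} applied to the directed system of finite infima of ultraspace topologies (all in $\calC$ by hypothesis, with identity connecting maps), the fact that the cofinite topology is the coarsest T$_1$-topology, and the embedding of an arbitrary T$_1$-space into a product of cofinite spaces, which lies in $\calC$ by closure of reflective subcategories under products. The only difference is organizational: you run the argument contrapositively (a non-empty finitely generated object would force every T$_1$-reflection to be bijective and hence $\calC=\Top_1$), whereas the paper argues directly, using the same two cases (some cofinite space outside $\calC$, or else some non-cofinite T$_1$-space outside $\calC$) to exhibit a single T$_1$-space with non-bijective reflection.
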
 
	
	\begin{proof} 
			According to Proposition \ref{P:refl}, we need to find a T$_1$-space $X$ with non-bijective $\C$-reflection $r_X$ since, then, $X$ cannot be an infimum of finitely many of ultraspace topologies. Indeed, as shown in the proof of Theorem \ref{T1fg}, $X$ is the colimit of a directed system of spaces whose topologies are infima of ultraspace topologies, and these spaces must all lie in $\C$, by hypothesis. 
			
			If there exists any {\it coarse T$_1$-space} $X$ ({\it i.e.}, a space $X$ carrying the cofinite topology) which lies outside $\C$, such $X$ would do the job. Indeed, $r_X$ cannot be bijective, because there is no  strictly coarser T$_1$-topology on the set $X$. 
			If, on the contrary, all coarse T$_1$-spaces lie in $\C$, then $\C$ cannot be hereditary. Indeed, every T$_1$-space embeds into a product of coarse T$_1$-spaces, so that there is some space $Y\in\C$ with $X\subseteq Y$, but $ X\notin \calC$. Hence, as an embedding that is not a homeomorphism, the reflection $r_X$, cannot be surjective.
			\qed
	\end{proof}

	 If $\calC\subseteq \Haus$, the provision of the Theorem regarding infima  may be weakened to the condition that $\calC$ contain all ultraspaces (since the infimum of finitely many ultraspaces finer than a Hausdorff space is Hausdorff and, in fact, a coproduct of finitely many ultraspaces). However, under the condition $\calC\subseteq \Haus$, we now prove a stronger assertion than that of the Theorem above, namely that the empty space is the only finitely generated space w.r.t embeddings. Our proof is based on a construction similar to Herrlich's Example \ref{Herrlichex}, which leads to a generalization of Corollary \ref{T2fge}. We preface this result by some preliminary observations.

		\begin{lemma}\label{lemma4}
			\begin{enumerate}
				\item[{\rm(1)}] If there is a discrete space not belonging to $\C$, then the empty space is the only object finitely generated w.r.t. \mbox{embeddings in $\C$.}
				\item[{\rm(2)}] If the reflective subcategory $\C$ contains all discrete spaces, then it contains all Hausdorff spaces with a finite number of accumulation points.
			\end{enumerate}
		\end{lemma}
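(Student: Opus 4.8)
The plan is to prove the two parts independently, obtaining (1) from Remark~\ref{R:refl} and (2) from the closure of $\C$ under limits together with the zero-dimensionality of spaces with finitely many accumulation points. For (1), I would fix a discrete space $E\notin\C$; since $D\in\C$ forces every finite discrete space into $\C$, such an $E$ is infinite. Presenting $E$ as the directed colimit in $\Top$ of its finite discrete subspaces, connected by inclusions (all embeddings lying in $\C$), it suffices by Remark~\ref{R:refl} to show that the reflection $r_E\colon E\to RE$ is not bijective, since a non-empty object finitely generated w.r.t.\ embeddings would force that reflection to be bijective. Now $r_E$ is injective, because for distinct $x,y$ the map $E\to D$ separating them factors through $r_E$ and $D\in\C$. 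If $r_E$ were moreover surjective, then for every $A\subseteq E$ the characteristic map $\chi_A\colon E\to D$ would factor through the bijection $r_E$ as a continuous map $RE\to D$, making $A$ open in $RE$; thus $RE$ would be discrete and equal to $E$, contradicting $E\notin\C$. Since only an $\omega$-chain is used when $E$ is countable, the same conclusion holds for finite smallness.

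For (2), let $X$ be Hausdorff with accumulation points $p_1,\dots,p_k$. First I would verify that $X$ is zero-dimensional: at an isolated point the singleton is clopen, and at $p_j$ one uses the Hausdorff property to separate $p_j$ from the finitely many other $p_i$, producing an open neighbourhood $V$ whose only possible limit point is $p_j$; as isolated points are never limit points, $\overline V=V$, so $V$ is clopen. Hence the clopen sets form a basis and separate points, and $X$ embeds via $e=(f)_f$ into the product $P=\prod_f E_f$ taken over all continuous maps $f\colon X\to E_f$ into discrete spaces (it suffices to use those of cardinality at most $|X|$, so $P$ is a genuine product). Being a product of discrete spaces, $P\in\C$. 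The key claim is that $e(X)$ is closed in $P$: granting it, $e(X)$ equals the intersection of the clopen subsets of the zero-dimensional space $P$ that contain it, hence it is the equalizer of the two maps $P\rightrightarrows D^J$ given by the indicators of those clopen sets and by the constant map at $\mathbf 1$, and as a limit of objects of $\C$ it lies in $\C$, with $e(X)\cong X$.

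To establish closedness I would describe a point $\xi$ of $\overline{e(X)}$ by the ultrafilter $\mathcal W$ on the Boolean algebra of clopen subsets of $X$ consisting of those $V$ with $\xi_{\chi_V}=1$; consistency of $\xi$ on the coordinates $(f,g)\colon X\to E_f\times E_g$ makes $\mathcal W$ an ultrafilter. Since $X$ is the clopen sum of its discrete part $W$ and the one-accumulation-point blocks $V_j=\{p_j\}\cup S_j$, exactly one of these finitely many blocks lies in $\mathcal W$. If $V_j\in\mathcal W$ and every clopen neighbourhood $\{p_j\}\cup A$ of $p_j$ lies in $\mathcal W$, then $\xi_f=f(p_j)$ for all $f$ (since $f^{-1}(f(p_j))$ is such a neighbourhood), so $\xi=e(p_j)$; otherwise some clopen $B_0\subseteq S_j$ lies in $\mathcal W$, and here I would invoke the continuous \emph{infinite-valued} map $\iota\colon X\to B_0\cup\{\star\}$ that is the identity on $B_0$ and collapses the clopen complement to $\star$. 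As $B_0\in\mathcal W$ forces $\xi_\iota\in B_0$, the value $\xi_\iota=s_0$ pins $\xi$ down to $s_0$, whence $\xi=e(s_0)$; the block $W$ is treated identically. Thus every $\xi\in\overline{e(X)}$ is realized by a point of $X$.

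The main obstacle is precisely this closedness step, and the decisive point is the use of infinite-valued maps. Embedding $X$ through two-valued (clopen) maps alone lands it in a compact zero-dimensional space whose closure is a Banaschewski-type compactification, adding a point for every non-fixed ultrafilter on the clopen algebra; the maps $\iota$, available exactly because $\C$ contains all discrete spaces, eliminate these escaping points by forcing any ultrafilter in the closure to be fixed by an actual point of $X$. I therefore expect the zero-dimensionality and the equalizer reduction to be routine, while the ultrafilter analysis producing closedness is where the real content lies.
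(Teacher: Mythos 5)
Your proposal is correct, and while your part (1) is essentially the paper's own argument, your part (2) takes a genuinely different route. For (1) the paper likewise applies Proposition \ref{P:refl} (in the embedding form of Remark \ref{R:refl}) to a directed system of discrete subspaces of a discrete space outside $\C$ --- it picks such a space of least cardinality and uses its subspaces of smaller cardinality where you use finite ones --- and your characteristic-map argument for the non-bijectivity of $r_E$ simply makes explicit what the paper asserts without proof. For (2) the paper argues through the reflection itself: for $X$ Hausdorff with a single accumulation point $x$, zero-dimensionality makes $r_X\colon X\to RX$ an embedding; a hypothetical point $z\in RX\setminus X$ is separated from $x$ by disjoint open sets (this is where the subsection's standing hypothesis $\C\subseteq\Haus$ enters), yielding a clopen neighbourhood $V$ of $z$ meeting $X$ in a single point $y$, and the retraction of $RX$ collapsing $V$ to $y$ fixes $X$ pointwise, hence has image a reflection of $X$, hence is a homeomorphism, forcing $V=\{y\}$ and $z=y\in X$; the case of finitely many accumulation points is then reduced to this one by writing $X$ as a finite coproduct of clopen pieces, each with at most one accumulation point (closure of $\C$ under finite coproducts, via retracts of products with $D$, is left implicit). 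You instead never look at $RX$: you exhibit $X$ as a closed subspace of a product of discrete spaces, cut out as an equalizer of maps into a power of $D$, with closedness secured by the ultrafilter analysis on the clopen algebra and, decisively, the infinite-valued collapse maps $\iota$. Your route is longer but buys generality: it needs only that $\C$ be replete, closed under limits, and contain all discrete spaces, so the hypothesis $\C\subseteq\Haus$ (indispensable for the paper's separation of $z$ from $x$) becomes superfluous, and all accumulation points are handled at once; the paper's route is shorter and machinery-free, exploiting the universal property of the reflection instead of a compactification-style ultrafilter argument.
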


	\begin{proof}
		(1) Consider a discrete space $X$ outside $\C$ of smallest (necessarily infinite) cardinality. Such space $X$ is a colimit in $\Top$ of its subspaces of smaller cardinality, and $r_X:X\to RX$ cannot be a bijection. Hence, Proposition \ref{P:refl} applies.
		
		(2) First, let $X$ be a Hausdorff space with a unique accumulation point $x$. 		The $\C$-reflection $r_X:X\to RX$ must be an embedding since $X$ is zero-dimensional. Assuming that $r_X$ is not bijective we have some $z\in RX\setminus X$. There is an open neighborhood $G$ of $z$ disjoint from a neighborhood $U$ of $x$. Consider the map $f:X\to X$, mapping $X\setminus U$ identically and $U$ constantly to $x$. The range of $f$ is discrete, and there is a unique continuous map $Rf:RX\to f(X)$ extending $f$. The point $z$ gets mapped by $Rf$ to a point $y\ne x$, and the preimage of $y$ is a clopen neighborhood $V$ of $z$ in $RX$ meeting $X$ in the point $y$. Now consider the map $h:RX\to RX$, mapping $RX\setminus V$ identically and $V$ constantly to $y$. Since $h$ is a retraction keeping $X$ fixed, the image $h(RX)$ is a $\C$-reflection of $X$. Therefore, the identity $X\to X$ extends uniquely to a homeomorphism $RX\to h(RX)$. Consequently, $h$ must be a homeomorphism, which implies $V=\{y\}$ and $z=y$,  contradicting the choice of $z$. Thus $X\in\calC$.
		
		Now, let the Hausdorff space $X$ have finitely many accumulation points $x_1,...,x_n$. Then there are disjoint open sets $G_i$ containing $x_i$. Thus, $X$ is the coproduct in $\Top$ of its subspaces $G_i$ (each having a unique accumulation point) and of a discrete space $X\setminus\bigcup G_i$.
		\qed
	\end{proof}
	
	\medskip

	We regard the points of $\beta(\mathbb N)$ as the ultrafilters on $\mathbb N$ and let $\mathbb{N^*}=\beta(\mathbb N)\setminus \mathbb N$. The trace of the neighborhood system of the point $\mathcal F\in\mathbb{N^*}$ in $\beta(\mathbb{N^*})$ is the filter $\mathcal F$.  
	
	\begin{lemma}\label{lemma6}
		\begin{enumerate}
			\item[{\rm(1)}] If $P$ is a dense subspace of $\mathbb{N^*}$ and $U_p\in p$ for every ultrafilter $p\in P$, then $\mathbb N \setminus (\bigcup_{p\in P} U_p)$ is a finite set.
			\item[{\rm(2)}] There exist two uncountable disjoint dense sets $P,\,Q$ in $\mathbb N{^*}$.
		\end{enumerate}
	\end{lemma}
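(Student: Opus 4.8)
The plan is to phrase everything in terms of the standard clopen base of $\mathbb{N}^*$: for an infinite set $A\subseteq\mathbb{N}$ write $\widehat{A}=\{\,p\in\mathbb{N}^*: A\in p\,\}$, so that $\widehat{A}\cap\widehat{B}=\widehat{A\cap B}$ and $\widehat{A}=\emptyset$ exactly when $A$ is finite. These sets form a base of the topology of $\mathbb{N}^*$; hence a subset is dense in $\mathbb{N}^*$ precisely when it meets $\widehat{A}$ for every infinite $A\subseteq\mathbb{N}$. Both parts reduce to this observation together with elementary ultrafilter bookkeeping.

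For part (1) I would argue by contradiction. Suppose $M:=\mathbb{N}\setminus\bigcup_{p\in P}U_p$ is infinite. Then $\widehat{M}$ is a non-empty open subset of $\mathbb{N}^*$, so density of $P$ yields some $p\in P\cap\widehat{M}$, that is, $M\in p$. Since also $U_p\in p$, the intersection $U_p\cap M$ belongs to $p$ and is therefore non-empty; but by the very definition of $M$ we have $U_p\cap M=\emptyset$ (as $p\in P$), a contradiction. Thus $M$ is finite. This part is short and presents no real difficulty.

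For part (2) I would construct $P$ and $Q$ by transfinite recursion. There are exactly $\mathfrak{c}=2^{\aleph_0}$ infinite subsets of $\mathbb{N}$, so enumerate them as $(A_\alpha)_{\alpha<\mathfrak{c}}$. At stage $\alpha$ choose two distinct points $p_\alpha,q_\alpha\in\widehat{A_\alpha}$ different from all points $p_\beta,q_\beta$ selected at earlier stages $\beta<\alpha$, and put $P=\{p_\alpha:\alpha<\mathfrak{c}\}$ and $Q=\{q_\alpha:\alpha<\mathfrak{c}\}$. By construction all chosen points are pairwise distinct, so $P\cap Q=\emptyset$; each of $P,Q$ meets every $\widehat{A_\alpha}$ and is therefore dense; and each has cardinality $\mathfrak{c}$, hence is uncountable (uncountability is in any case automatic, since $\mathbb{N}^*$ is not separable).

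The one substantive point, which is where the main input enters, is that this recursion never stalls. Each basic clopen set $\widehat{A_\alpha}$ is homeomorphic to $\mathbb{N}^*$, so by Posp\'{\i}\v{s}il's theorem it has cardinality $2^{\mathfrak{c}}=2^{2^{\aleph_0}}$. At stage $\alpha<\mathfrak{c}$ the set of previously chosen points has cardinality at most $\mathfrak{c}$, whereas $|\widehat{A_\alpha}|=2^{\mathfrak{c}}>\mathfrak{c}$; hence $\widehat{A_\alpha}$ still contains two unused distinct points and the selection can be made. Thus the only nontrivial ingredient beyond the elementary computations is the cardinality estimate $|\mathbb{N}^*|=2^{2^{\aleph_0}}$ (for the construction one in fact only needs $|\widehat{A_\alpha}|>\mathfrak{c}$), which guarantees enough room at every step.
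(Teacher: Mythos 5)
Your proof is correct and follows essentially the same route as the paper's: part (1) by applying density of $P$ to the clopen set $\widehat{M}$ determined by the (putatively infinite) complement, and part (2) by a transfinite recursion of length $\mathfrak{c}$ over the basic clopen sets, using $|\widehat{A}|=2^{\mathfrak{c}}>\mathfrak{c}$ to keep choosing fresh pairs of points. The only difference is presentational: you make explicit the ultrafilter computation behind the disjointness in (1) and the appeal to Posp\'{\i}\v{s}il's cardinality theorem in (2), both of which the paper leaves implicit.
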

	
	\begin{proof}
		(1) Suppose that $A=\mathbb N \setminus( \bigcup_{p\in P} U_p)$ is infinite. Then 	the closure $\overline{A}$ in $\beta(\mathbb N)$ is a clopen set meeting $\mathbb N{^*}$ in a non-void open set disjoint from $P$, which contradicts the density of $P$.
		
		(2) A base of open sets in $\mathbb N{^*}$ is formed by the sets $A^*=\overline A\setminus \mathbb N$, where $A$ is an infinite subset of $\mathbb N$. Each such set has cardinality $2^{2^\omega}$, and we may well-order them as $\{A_\alpha^*\mid\alpha\in 2^\omega\}$. Begin by picking any two distinct points $p_1,q_1$ in $A_1^*$. If, for all $\beta<\alpha$, we already have points $p_\beta,q_\beta\in A_\beta^*$, such that the sets $P_\alpha=\{p_\beta\mid\beta<\alpha\},Q_\alpha=\{q_\beta\mid\beta<\alpha\}$ are disjoint, we can find two distinct points $p_\alpha,q_\alpha$ in $A_\alpha^* \setminus (P_\alpha\cup Q_\alpha)$ since $|P_\alpha\cup Q_\alpha| < 2^\omega$. The sets $\{p_\alpha\mid\alpha\in 2^\omega\},Q=\{q_\alpha\mid\alpha\in 2^\omega\}$ are the required disjoint dense sets.
		\qed
	\end{proof} 
	
	We are ready to state our result. Proving it, we will use a finer topology on the set $\beta(\mathbb N)$, declaring every point in $\mathbb N^*=\beta(\mathbb N)\setminus \mathbb N$ to be isolated 
	and keeping its usual neighborhoods restricted to $\mathbb N$. We denote the new space with same underlying set as $\beta(\mathbb N)$ with this topology by $\delta(\mathbb N)$;  this is the so called {\it Kat\v{e}tov H-closed extension of $\mathbb N$}.
		
		\begin{theorem}
			Let the full reflective subcategory $\C$ of $\Top$ be contained in $\Haus$ and contain a two-point discrete space. Then the only space that is finitely generated w.r.t. embeddings in $\C$ is empty.
		\end{theorem}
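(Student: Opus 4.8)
The plan is to split into two cases according to whether every discrete space lies in $\calC$, and in the harder case to exhibit a directed diagram of \emph{embeddings} between objects of $\calC$ whose colimit in $\Top$ has a non-injective (hence non-bijective) reflection map. By the embedding-analogue of Proposition \ref{P:refl} recorded in Remark \ref{R:refl}, the mere existence of such a diagram forces the only object finitely generated w.r.t. embeddings in $\calC$ to be the empty space, exactly as in the deduction of Corollary \ref{T2fge}.

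First, if some discrete space fails to lie in $\calC$, then Lemma \ref{lemma4}(1) yields the conclusion immediately. So I may assume that $\calC$ contains every discrete space, and Lemma \ref{lemma4}(2) then guarantees that $\calC$ contains every Hausdorff space with only finitely many accumulation points. These are the intended building blocks of the diagram. A warning is in order: each such block is automatically normal, so the construction cannot literally copy Herrlich's Example \ref{Herrlichex}, whose finite stages are genuinely non-normal.

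Next I would build, out of the Kat\v{e}tov extension $\delta(\mathbb N)$ together with two disjoint uncountable dense subsets $P,Q\subseteq\mathbb N^*$ supplied by Lemma \ref{lemma6}(2), a non-Hausdorff space $Z$ carrying two points $a,b$ with no disjoint open neighbourhoods, presented as the colimit in $\Top$ of a directed diagram of embeddings whose members are Hausdorff spaces with finitely many accumulation points. Here $a$ is to arise as the common limit of the ultrafilters in $P$ and $b$ as that of $Q$; the essential separation failure is then delivered by Lemma \ref{lemma6}(1), which forces every neighbourhood of $a$ and every neighbourhood of $b$ to trace a cofinite subset of $\mathbb N$, so that the two traces necessarily meet. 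Since $P\cap Q=\emptyset$ the two points are distinct, and inside each finite stage only finitely many ultrafilters from $P\cup Q$ are active, so there $a$ and $b$ can still be separated and the stage is an admissible Hausdorff space lying in $\calC$ by the previous paragraph.

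Once $Z$ is presented as such a colimit, the reflection argument is routine: because $\calC\subseteq\Haus$, any continuous map from $Z$ into an object of $\calC$ must send $a$ and $b$ to the same point, since two points without disjoint neighbourhoods are identified by every map to a Hausdorff space. Hence the reflection map $r_Z\colon Z\to RZ$ is not injective, so not bijective, and Remark \ref{R:refl} finishes the proof. I expect the \emph{middle step} to be the main obstacle: one must arrange the diagram so that the connecting maps are genuine embeddings and every stage has only finitely many accumulation points, while the colimit still fails to be Hausdorff. This is delicate precisely because the admissible stages are normal and therefore cannot themselves harbour an inseparable pair of closed sets in the manner of Herrlich's blocks; the inseparability of $a$ and $b$ must instead be manufactured only in the passage to the colimit, and it is exactly here that the density of $P$ and $Q$, via Lemma \ref{lemma6}(1), does the decisive work.
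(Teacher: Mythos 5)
Your skeleton matches the paper's proof exactly — the case split via Lemma \ref{lemma4}, the disjoint dense sets $P,Q\subseteq\mathbb N^*$ from Lemma \ref{lemma6}(2), a non-Hausdorff colimit of embeddings of spaces in $\calC$, and the reflection argument via Remark \ref{R:refl} — but the step you defer as ``the main obstacle'' is not a verification that can be postponed: it is the entire content of the paper's proof, and the sketch you offer in its place would not work. Suppose, as your description suggests, that $a$ and $b$ are attached to a \emph{single} copy of $S=\mathbb N\cup P\cup Q\subseteq\delta(\mathbb N)$ as the common limits of $P$ and of $Q$. There are then two natural ways to truncate, and both fail. If the stages keep all of $\mathbb N$ but only finitely many ultrafilters (your phrase ``only finitely many ultrafilters from $P\cup Q$ are active''), then they are \emph{not} Hausdorff as subspaces: any open $U\ni a$ in the big space must contain, for cofinitely many $p\in P$, some $A_p\in p$, so by Lemma \ref{lemma6}(1) its trace on the stage contains $\{a\}$ together with a cofinite subset of $\mathbb N$, and likewise for $b$; the two traces meet already inside the stage. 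If instead the stages keep all of $P\cup Q$ but only a finite piece $F$ of $\mathbb N$ (so that they do have just the two accumulation points $a,b$ and are Hausdorff), then the colimit is Hausdorff too: choosing for each $p\in P$ some $A_p\in p$ with $A_p\cap F=\emptyset$, the open set $\{a\}\cup(P\setminus E)\cup\bigcup_{p\in P\setminus E}A_p$ traces on the stage as $\{a\}\cup(P\setminus E)$, so $\{a\}\cup P$ and $\{b\}\cup Q$ are disjoint sets that are open in every stage, hence open in the final topology. In either reading, the inseparability of $a$ and $b$ is not ``manufactured in the passage to the colimit''; it is either already present in the stages (ruining their admissibility) or destroyed by the colimit (ruining the reflection argument).

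The paper escapes this dichotomy with an idea absent from your proposal: spread $S$ over countably many columns. It takes $X=(\mathbb N\times S)\cup\{p,q\}$ with Herrlich-style tail neighbourhoods $U_n=\{p\}\cup\bigcup_{k>n}\bigl(\{k\}\times(P\cup\mathbb N)\bigr)$ of $p$ (and $V_n$ likewise with $Q$), and stages $X_k=X\setminus\bigl(\{k+1,k+2,\dots\}\times\mathbb N\bigr)$: these truncate the copies of $\mathbb N$ in the late columns while keeping \emph{full} copies of $S$ in the first $k+1$ columns. A finally-open $G\ni p$ must contain a tail of column-copies of $P$, hence meets each full copy $\{n\}\times S$ (for large $n$, in a late enough stage) in an open set containing $\{n\}\times P$, which by Lemma \ref{lemma6}(1) contains $\{n\}\times\mathbb N$ up to a finite set; the same holds for $q$, so $p$ and $q$ cannot be separated in the colimit. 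Even then the proof is not finished, because each $X_k$ has infinitely many accumulation points and so is not itself provided by Lemma \ref{lemma4}(2); the paper needs a second level of final generation (sequences converging to $p$ and to $q$, ultraspace pieces of $S$ in each column, and the discrete parts) to present $(X,t)$ as the colimit of a directed system of embeddings of spaces that genuinely lie in $\calC$. Your proposal correctly identifies all the ingredients and the shape of the argument, but the construction that makes them fit together — the actual theorem — is missing, and the hints you give about how it would go point toward versions that provably fail.
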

		
		\begin{proof}
			Lemma \ref{lemma4} allows us to restrict ourselves to the case that
			all discrete spaces belong to $\C$. By Lemma \ref{lemma6} we have two disjoint dense subsets $P,Q$ of $\mathbb N^*$. Regarding $S:=\mathbb N\cup P\cup Q$ as a subspace of $\delta(\mathbb N)$, we see that $P,Q$ are discrete subspaces of $S$. 			Now we define a topology $t$ on the set
			$$X=(\mathbb N\times S)\cup\{p,q\},$$
			where $p,q\notin \mathbb N\times S$ are distinct, as follows:
			\begin{itemize}
				\item The sets $\{n\}\times S$  are open. 
				\item The sets $U_n=\{p\}\cup\bigcup_{k>n}(\{k\}\times (P\cup \mathbb N))$ are basic neighborhoods of $p$, for all $n\in\mathbb N$.
				\item The sets $V_n=\{q\}\cup\bigcup_{k>n}(\{k\}\times (Q\cup \mathbb N))$ are basic neighborhoods of $q$, for all $n\in\mathbb N$.
			\end{itemize}
			The resulting space $(X,t)$ fails to be Hausdorff, but its subspaces $$X_k:=X\setminus (\{k+1,k+2,...\}\times \mathbb N)$$ form an increasing sequence of closed Hausdorff subspaces of $X$ covering $X$. 
			
			We proceed to show that the topology $\tau$ on the set $X$ finally generated by the system $\{X_k\}_{\mathbb N}$ equals the topology $t$ of $X$. Clearly, the neighborhood systems at any point of $X$ different from $p,q$ coincide with respect to both, $t$ and $\tau$. As the situation for the remaining two points is symmetric, it suffices to show that every open set $G$ in $\tau$ containing the point $p$ is also its neighborhood in $t$. 
			
			Indeed, since $G\cap X_1$ is a neighborhood of $p$ in $X_1$, it contains the set
			$\bigcup_{n\geq k}\{n\}\times P$ for some $k\in\mathbb N$. Since every open set in $S$ containing $P$ contains $\mathbb N$ up to a finite set, $G\cap X_m$ with $m> k$ must contain $\bigcup \{\{n\}\times \Bbb N,\, n=k+1,...,m$. Consequently, $G$ contains  the $t$-neighborhood $U_n$. 
			
			Now we show that each space $X_k$ is finally generated by those of its subspaces which have only finitely many accumulation points. The space $X_k$ is a union of $\{p,q\}$, a coproduct of countably many discrete spaces, and of finitely many copies of $S$. The points $p,q$ have  countable bases of neighborhoods. The space $S$ is finally generated by subspaces which are ultraspaces. 
			
			We denote by $\calP$ (or $\calQ$) the set of all sequences from $\bigcup_{\mathbb N}\{n\}\times P$ (or from $\bigcup_{\mathbb N}\{n\}\times Q$) converging to $p$ (or to $q$, resp.). The system $\calS$ of all subspaces of $S$ with a finite number of accumulation points finally generates $S$, and we may assume that $\calS$ covers $S$. The system $\calP\cup\calQ\cup \bigcup_{\mathbb N}(\{n\}\times \calS)\cup \bigcup_{n\le k}(\{n\}\times \mathbb N)$ finally generates $X_k$, as well as the system of unions of finitely many members of $\calS_k$. We have $\calS_k\subseteq \calS_m$ for $k\le m$. Consequently, a set $G$ is open in $X$ if, and only if, the intersections with every $X_k$ are open; equivalently, if, the intersections with every member of $\mathcal{S}_k$ are open. 
			
			We exhibited a directed system of embeddings of spaces from $\calC$ whose colimit is $(X,t)$. Since the reflection $X\to RX$ is not bijective, we conclude that only the empty space is finitely generated w.r.t. embeddings in $\C$.
		\end{proof}  \qed

		\subsection{\small Reflective subcategories with all reflections bijective}
		
		Our reflective subcategory $\C$ of $\Top$ under consideration now contains the two-point indiscrete space $T$ (and, hence, all indiscrete spaces) and, equivalently, makes the reflection maps $r_X:X\to RX$ bijective, for all $X\in\Top$. The largest such subcategory $\C$ is, of course, $\Top$ itself, for which we identified the finitely generated objects as the finite discrete spaces in Proposition \ref{fpTop}. The least such subcategory $\C$ contains precisely the indiscrete spaces, which is isomorphic to $\Set$, so that the finitely generated objects are trivially characterized as {\it all} the finite objects in that category. We are about to show that for any other $\C$, the characterization of finitely generated objects generalizes the case $\C=\Top$.
		
		It is easy to show that $\C$, having bijective reflection maps, must contain every space $X$ that is the domain of some initial source $f_i:X\to Y_i$ in $\Top$ with all $Y_i\in\C,\;i\in I$. In particular, considering for any space $X$ the source of all continuous maps $X\to S$, one sees that $\C$ must equal $\Top$, as soon as the Sierpi\'nski space $S$ is in $\C$. Consequently,
		if $S\notin\C$, then
		every space $X$ in $\C$ must be {\it symmetric}, that is: the specialization order of $X$ must be symmetric, so that $x\in\overline{\{y\}}$ implies $y\in\overline{\{x\}}$ for all $x,y\in X$. In other words, the subcategory of symmetric spaces is largest amongst all proper subcategories $\C$ of $\Top$ under consideration in this subsection.
		
		There is also a least member amongst all subcategories $\C$  under consideration which contains not just indiscrete spaces. Indeed, any epi-reflective $\C$ containing $T$ and at least one non-indiscrete space must contain all zero-dimensional spaces.
		
		\begin{theorem}
			If the reflective subcategory $\calC$ of $\Top$ contains all indiscrete spaces and at least one non-indiscrete space, then its finitely generated objects are precisely the finite discrete spaces.
		\end{theorem}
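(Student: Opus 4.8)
The plan is to prove both implications, first disposing of the case $\calC=\Top$, which occurs exactly when the Sierpi\'nski space $S$ lies in $\calC$, and for which the assertion is already Proposition~\ref{fpTop}. So I assume $S\notin\calC$; then, as noted above, every space in $\calC$ is symmetric, and since $\calC$ is closed under initial sources it contains all finite discrete spaces and all zero-dimensional T$_0$-spaces (being subspaces of powers of the two-point discrete space $D$), in addition to all indiscrete spaces. For the easy implication, let $X$ be finite discrete. Since the reflection maps are bijective, the colimit in $\calC$ of a directed diagram of monomorphisms has the same underlying set as the colimit in $\Top$ --- a directed union --- with only a coarsened topology. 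Hence any $f\colon X\to Z$ into such a colimit factors through some stage set-theoretically ($X$ being finite and the connecting maps injective), the factorization is automatically continuous because $X$ is discrete, and essential uniqueness is free since the colimit maps are monic.

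For the converse I would show that a finitely generated $X$ must be finite, and then discrete. Finiteness follows exactly as in Proposition~\ref{fpTop}: were $X$ infinite, the directed diagram of its finite subsets, regarded as \emph{indiscrete} subspaces, would lie entirely in $\calC$ (all indiscrete spaces belong to $\calC$), its colimit would be the indiscrete space on the underlying set of $X$, and the continuous identity map into that colimit could not factor through any finite member. To force discreteness I would reduce to a single test space: a finite symmetric non-discrete space has an indiscrete block of at least two points, and mapping everything outside a chosen pair $\{a,b\}$ of that block to $a$ exhibits the two-point indiscrete space $T$ as a \emph{retract} of $X$ in $\calC$. As finitely generated objects are closed under retracts, it then suffices to prove that $T$ is \emph{not} finitely generated in $\calC$.

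This last claim is the heart of the matter and the main obstacle. Because colimits in $\calC$ are reflections of colimits in $\Top$ and the reflections here are bijective, a map $f\colon T\to Z$ into a $\calC$-colimit of monomorphisms is continuous precisely when the two chosen points become topologically indistinguishable in the reflection, whereas it factors through a stage precisely when they are already indistinguishable there. Recalling that the symmetric (that is, R$_0$-)reflection identifies two points exactly when they lie in a common specialization component, the task becomes the construction of a directed diagram of monomorphisms between symmetric spaces --- drawn only from finite discrete, indiscrete and convergent-sequence spaces, so that the diagram lies in \emph{every} admissible $\calC$ --- whose colimit in $\Top$ places $a$ and $b$ in one specialization component while each stage keeps them in distinct blocks. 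Since a smaller $\calC$ carries a coarser reflection, which can only identify more points, it is enough to secure this for the largest admissible category, that of symmetric spaces.

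The construction I have in mind is in the spirit of Herrlich's Example~\ref{Herrlichex} and of Example~\ref{T0example}, but must be performed with \emph{symmetric} stages so as to remain inside $\calC$; I expect this to be the genuinely delicate step. The difficulty is structural: symmetric stages cannot themselves display the asymmetric specializations that are needed to link $a$ to $b$, so these specializations have to be manufactured \emph{in the limit}, through an infinite family of auxiliary points whose open sets separating $a$ from $b$ erode as one advances along the diagram. This is precisely where the hypothesis that $\calC$ contains a non-indiscrete space becomes indispensable: it guarantees that $\calC$ is strictly larger than the category of indiscrete spaces, supplying the discrete and zero-dimensional spaces required to build the diagram and ensuring that the reflection acts non-trivially on its colimit. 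Without this hypothesis $\calC\cong\Set$, and then $T$ would indeed be finitely generated, which is exactly the degenerate behaviour the hypothesis is designed to exclude.
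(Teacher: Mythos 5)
Your overall skeleton coincides with the paper's: finite discrete spaces are finitely generated by transplanting the argument of Proposition \ref{fpTop}; a finitely generated object of $\calC$ must be finite because the directed system of its finite \emph{indiscrete} subspaces lies entirely in $\calC$; and discreteness is reduced, via the retract argument, to showing that a two-point non-discrete space is not finitely generated in $\calC$ (the paper rules out $T$ and $S$ simultaneously, while your preliminary case split $S\in\calC\Rightarrow\calC=\Top$ legitimately reduces the problem to $T$ alone). But your proposal stops exactly at the step that carries all the new content of the theorem: you never construct a directed diagram of monomorphisms in $\calC$ whose colimit renders the two chosen points topologically indistinguishable while every stage keeps them distinguishable. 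You only list the properties such a diagram would need and defer its construction as ``the genuinely delicate step,'' even flagging it as an expected difficulty. Since everything else in your argument was already available from Sections 2 and 3 (and from the retract remark), this missing construction \emph{is} the theorem; the proposal therefore has a genuine gap.

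For comparison, the paper fills the gap concretely, and in a way that shows your structural worry to be a red herring. Because all reflections are bijective, $\calC$ is closed under domains of \emph{arbitrary} initial sources, not just initial monosources; hence $\calC$ contains \emph{every} zero-dimensional space, with no T$_0$ restriction whatsoever. So you are not confined to your palette of finite discrete, indiscrete and convergent-sequence spaces, and there is no need to ``manufacture asymmetric specializations in the limit'': the stages may be symmetric, badly non-T$_0$ spaces with large blocks of mutually indistinguishable points, as long as the two test points stay separated at each stage. Concretely, the paper takes disjoint countable dense sets $X_0,X_1\subseteq[0,1]$ with $0\in X_0$, $1\in X_1$, and for each $n$ a zero-dimensional topology $\tau_n$ on $X=X_0\cup X_1$ in which the points of $X_0\cap[0,1-1/n]$ share the neighbourhoods $(X_0\cap[0,1-1/n])\cup(X\cap[0,1/k))$, symmetrically for $X_1\cap[1/n,1)$ towards $1$, all remaining points keeping Euclidean neighbourhoods. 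Each $(X,\tau_n)$ lies in $\calC$; the pair $\{0,1\}$ is discrete in every $\tau_n$; yet in the finest topology coarser than all $\tau_n$ (the colimit in $\Top$, whose $\calC$-reflection is a bijective coarsening) density forces every open set containing $0$ to contain $1$ and conversely, so the evident maps from $T$ and $S$ into the colimit are continuous but factor through no stage. Finally, note a conceptual slip in your reduction: since all reflections in this setting are bijective, a smaller $\calC$ never ``identifies more points''; the relevant monotonicity is that reflection only \emph{coarsens} the topology, which is precisely what preserves the indiscreteness of the pair in the reflected colimit.
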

		
		\begin{proof}
			Under minimal modifications one can use the same argumentation given for $\Top$ also for $\C$ and obtain that all finite discrete spaces are finitely generated in $\calC$, and that conversely all finitely generated objects of $\C$ must be finite. The only remaining issue is therefore for us to show that these spaces must be discrete. Since both $T,S$ are retracts of any larger space and finitely generated spaces are closed under retracts, one must show that neither $T$ nor $S$ is finitely generated.

				Take two disjoint countable dense subsets $X_0,X_1$ of the  interval $[0,1]$ with $0\in X_0, 1\in X_1$. For every $n\geq 1$ we provide the set $X:=X_0\cup X_1$ with the following topology $\tau_n$:
				\begin{itemize}
					\item basic neighborhoods of any point of $X_0\cap [0,1-1/n]$ are the sets\\ 
					$(X_0\cap [0,1-1/n])\cup (X\cap [0,1/k)),k\in\mathbb N$;
					\item basic neighborhoods of any point of $X_1\cap[1/n,1)$ are the sets\\ 
					$(X_1\cap[1/n,1])\cup (X\cap (1-1/k,1],k\in\mathbb N$;
					\item neighborhoods of the remaining points are the usual Euclidean neighborhoods restricted to $X$. 
				\end{itemize}
				The topologies $\tau_n$ are zero-dimensional and, thus, $(X,\tau_n)\in\calC$. In the finest topology $\tau$ coarser than all $\tau_n$, the subspace $\{0,1\}$ is indiscrete, while it is discrete in all $\tau_n$. Consequently, neither $T$ nor $S$ are finitely generated in $\calC$.
				\qed
		\end{proof}

		For the characterization of the finitely generated spaces w.r.t. embeddings in our subcategory $\C$, we just note that the proof of Theorem 3.3 remains intact; consequently:
		
		\begin{corollary}
			If the reflective subcategory $\calC$ of $\Top$ contains all indiscrete spaces, then the objects finitely generated  w.r.t. embeddings in $\C$ are precisely the finite spaces in $\calC$.
		\end{corollary}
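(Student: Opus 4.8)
The plan is to follow the two halves of Theorem~\ref{T}(1) and to check that each survives the passage from $\Top$ to $\C$. Throughout I use that $\C$, having all its reflection maps bijective, contains every indiscrete space and is closed under subspaces and under initial sources.

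For the necessity of finiteness I would argue exactly as in the reverse implication of Theorem~\ref{T}(1). Let $X\in\C$ be infinite and let $X'$ be the indiscrete space on the same underlying set; since $\C$ contains all indiscrete spaces, $X'\in\C$. By Remark~\ref{conti} the set $X$ is the union of a continuous chain of proper subsets of strictly smaller cardinality, and endowing each of them with the indiscrete topology presents $X'$ as the colimit of a continuous chain of proper indiscrete subspaces connected by embeddings. The colimit of this chain in $\Top$ is again the indiscrete space $X'$ (an open set of the colimit must meet each member in a trivial set), and since $X'\in\C$ this is simultaneously the colimit in $\C$, with the subspace inclusions as colimit cocone. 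The identity map $X\to X'$ is continuous but, being a bijection, cannot factor through any proper member of the chain; hence $X$ is not finitely generated w.r.t. embeddings in $\C$.

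For the sufficiency, let $X\in\C$ be finite and let $D=(Z_i)_{i\in I}$ be a directed diagram of embeddings in $\C$ with colimit $Z$ in $\C$. On underlying sets the colimit is the directed union of the $Z_i$, so the finite image of any continuous $f\colon X\to Z$ is contained in the image of a single colimit map $c_j$; this yields a unique set-theoretic factorization $f=c_j\cdot g$ with $g\colon X\to Z_j$, and essential uniqueness is automatic since every $c_i$ is monic. Everything thus reduces to the continuity of $g$, which, as in Theorem~\ref{T}(1), is immediate once the colimit cocone consists of \emph{embeddings}: a map into $Z_j$ is then continuous as soon as its composite with $c_j$ is.

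The main obstacle is precisely this requirement on the cocone. In $\Top$ a directed diagram of embeddings has a colimit cocone of embeddings (Proposition~\ref{dir}), but the colimit of $D$ in $\C$ is the reflection of the colimit $Z'$ formed in $\Top$, and a priori its cocone maps $r_{Z'}\cdot c_i'$ are only continuous bijections onto their images. What I must establish, in analogy with the closure property that let Proposition~\ref{dir} be used for $\Top_0$, $\Top_1$ and $\Haus$ (cf.\ Corollary~\ref{C:compact}), is that $\C$ is closed under directed colimits of embeddings, i.e.\ that $Z'$ already lies in $\C$; then the $\C$-colimit coincides with the $\Top$-colimit and the preceding reduction goes through verbatim. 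I would try to verify this by exhibiting $Z'$ as the domain of an initial source into objects of $\C$: since each stage $Z_i\in\C$ carries the initial topology with respect to its maps into $\C$, and since the closure of a point computed in a stage agrees with the one computed in $Z'$, the task is to extend the separating maps available on the stages to maps defined on all of $Z'$. Reconciling the topology of $Z'$ with that of its reflection on the relevant finite configurations is the delicate heart of the argument; once it is settled, the sufficiency follows at once and the corollary is complete.
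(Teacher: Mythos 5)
Your necessity argument is complete and coincides with the paper's (the paper's entire proof of this Corollary is the remark that the proof of Theorem \ref{T} ``remains intact''): the continuous chain of indiscrete subspaces lies in $\C$, its colimit in $\Top$ is the indiscrete space $X'$, which already belongs to $\C$, so no reflection intervenes, and the identity map factors through no proper member. The sufficiency half, however, is not a proof: you reduce it to the claim that the $\Top$-colimit $Z'$ of a directed diagram of embeddings in $\C$ again lies in $\C$ (equivalently, that the $\C$-colimit cocone consists of embeddings), and you leave this ``delicate heart'' unresolved. This is a genuine gap, and it cannot be closed at the stated level of generality, because the claim is false for some reflective $\C$ containing all indiscrete spaces. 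Take $\C$ to be the category of completely regular (not necessarily Hausdorff) spaces; it is closed under products and subspaces in $\Top$, hence reflective, and it contains every indiscrete space. Herrlich's chain from Example \ref{Herrlichex} and Lemma \ref{proof} consists of Tychonoff, hence completely regular, spaces connected by closed embeddings, yet its colimit $X$ in $\Top$ is T$_1$ and not Hausdorff; since a completely regular T$_1$-space is Hausdorff, $X\notin\C$.

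Worse, this example shows that no repair of your strategy (nor of the paper's one-line justification) is possible in this generality, because the forward implication itself fails for this $\C$. Every continuous real-valued function on $X$ takes equal values at the two inseparable points $a,b$, so in the reflection $RX$, whose topology is generated by the cozero sets of $X$, the points $a$ and $b$ are topologically indistinguishable. Hence the map from the two-point indiscrete space $T\in\C$ to the $\C$-colimit $RX$ of the chain $(X_m)_{m<\omega}$ sending the two points to $a$ and $b$ is continuous; but it factors through no $X_m$, since such a factorization would be an injective continuous map from $T$ into the Hausdorff space $X_m$. So the finite space $T$ is not finitely generated w.r.t.\ embeddings in this $\C$. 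In short: your proposal establishes only the necessity direction; the step you flagged is not merely unproven but false in general, and sufficiency genuinely needs an additional hypothesis --- closure of $\C$ under directed colimits of embeddings formed in $\Top$ --- which does hold for $\Top$ itself, for the symmetric spaces, and for the indiscrete spaces (where your reduction then works verbatim), but not for every $\C$ satisfying the Corollary's assumptions.
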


\begin{remark}	
\rm		In this section we considered reflective categories in $\Top$ containing a two-point discrete space. That is a natural assumption. Nevertheless, there are reflective categories in $\Top$ containing a space with more than one point but no such space is discrete. Horst Herrlich constructed a very nice example of such a subcategory in \cite{Herrlich1969a}. The category is composed of all powers of a strongly rigid space having more than one point. A natural question is what the finitely generated spaces in that category are. 
We can show that, again, only the empty space is finitely generated, even w.r.t. embeddings, as follows.
	
	Let $X$ be a strongly rigid Hausdorff space and $\calC$ the full subcategory of $\Top$ formed by all powers of $X$. For a cardinal $\lambda$, let $Z_\lambda$ be the reflection  into $\calC$ of a discrete space $D_\lambda$ of cardinality $\lambda$. By induction, define $\mu_1=|X|,\; \mu_{n+1}=2^{\mu_n},\;n\in\mathbb N$ and $\mu=\sup \mu_n$. For $\lambda<\nu$ the set $D_\lambda$ is a retract of $D_\nu$, which implies that $Z_\lambda$ is a retract of $Z_\nu$, thus homeomorphic to a closed subspace of $Z_\nu$. Consequently, $(Z_{\mu_n})_{n\in\mathbb N}$ is an increasing sequence of spaces in $\calC$, where each $Z_{\mu_n}$ may be considered as a closed subspace of $Z_\mu$. Let $Z=\bigcup Z_{\mu_n}$ be endowed with the final topology generated by $(Z_{\mu_n})$, so that $Z$ is the colimit in $\Top$ of a directed limit of embeddings in $\calC$. 
	
	We show $Z\notin\calC$. Every member $X^\lambda$ of $\calC$ has cardinality equal to either $2^\lambda$, provided that $\lambda\ge |X|$, or of at most $2^{|X|}$ otherwise. Since $|Z|=\mu> 2^{|X|}$, any possible $\lambda$ with $X^\lambda\cong Z$ must be bigger than $|X|$. If $\lambda<\mu$, then $\lambda<\mu_n$ for some $n$ and, thus $|X|^\lambda\le 2^{\mu_n}<\mu=|Z|$. If $\lambda\ge\mu$, then $|X|^\lambda\ge 2^\mu>\mu=|Z|$. Thus the space $Z$ can never be homeomorphic to $X^\lambda$ for some $\lambda$.  Consequently, $|RZ|>|Z|$, whence the reflection $Z\to RZ$ is not bijective, so that no non-empty space is finitely generated w.r.t. embeddings in $\calC$. 
	\end{remark}

\section{\small Further properties of subcategories of $\Top$}

\subsection{\small Where does `finitely generated' just mean `finite'?}
We have seen that the meaning of `finitely generated' may change drastically, depending on the the category of consideration: in $\Top$ it means 'finite and discrete', and in $\Top_0$ it just means `empty'.
Here is a coreflective subcategory of $\Top$, containing not just discrete spaces, where `finitely generated' plainly means that the carrier set is finite, as we have seen it for the reflective subcategory $\Top_1$.

\begin{proposition}
	The finite topological spaces are the finitely generated objects in the category $\mathsf{Alex}$ of Alexandroff-discrete spaces (where arbitrary intersections of opens are open). The finite T$_0$-spaces are the finitely generated objects of the category $\mathsf{Alex}_0$ of Alexandroff-discrete T$_0$-spaces.
\end{proposition}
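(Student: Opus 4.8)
The plan is to translate everything into the equivalent category of (pre)orders, where the combinatorics become transparent, and then to exploit the fact that---unlike in $\Top$---the Alexandroff topology of a space is completely reconstructed from its finite subspaces. Recall that $\mathsf{Alex}$ is isomorphic to the category $\PreOrd$ of preorders and monotone maps via the specialization order (open sets $=$ up-sets), and that under this isomorphism $\mathsf{Alex}_0$ corresponds to the category $\mathsf{Pos}$ of partially ordered sets; continuous maps are the monotone maps, and monomorphisms are the injective monotone maps. First I would record that the colimits relevant to finite generation agree across $\mathsf{Alex}_0\subseteq\mathsf{Alex}\subseteq\Top$. Since $\mathsf{Alex}$ is coreflective in $\Top$ (the coreflection equips a space with all up-sets of its specialization order as opens), and since an arbitrary intersection of up-sets is an up-set, the $\Top$-colimit of Alexandroff spaces is again Alexandroff; hence colimits in $\mathsf{Alex}$ are computed as in $\Top$. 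Concretely, for a directed diagram of monomorphisms---which we may take to be inclusions---the colimit $Z=\bigcup_i Z_i$ carries the union order, so that for $x,y\in Z_i$ one has $x\leq_Z y$ iff $x\leq_{Z_j}y$ for some $j\geq i$. For posets one checks that this union order is again antisymmetric (if $x\leq_Z y\leq_Z x$, both relations are realized in a common $Z_j$, forcing $x=y$), so the very same colimit serves for $\mathsf{Alex}_0$.

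For the easy inclusion, suppose $X$ is finite and let $f\colon X\to Z=\bigcup_i Z_i$ be continuous. Its image is finite, hence contained in some $Z_{i_0}$, giving a set-theoretic factorization $g\colon X\to Z_{i_0}$. This $g$ need not yet be monotone, because the colimit injections need not reflect the order; however, $X$ has only finitely many related pairs $x\leq_X x'$, and for each of them $g(x)\leq_Z g(x')$ (as $f$ is monotone) is already witnessed at some stage $Z_{j}\;(j\geq i_0)$. Taking an upper bound $j^\ast$ of these finitely many indices, the map $g'\colon X\to Z_{j^\ast}$ obtained by composing $g$ with the inclusion is monotone and satisfies $c_{j^\ast}\cdot g'=f$. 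Essential uniqueness is automatic: the colimit injections are injective, so any two factorizations through a fixed $Z_k$ coincide. Thus every finite object is finitely generated, in both $\mathsf{Alex}$ and $\mathsf{Alex}_0$.

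For the converse I would present an infinite object $X$ as the colimit of the directed diagram of all of its finite subspaces, connected by inclusions (these are monomorphisms, and the finite subsets ordered by inclusion form a directed poset). The crucial point---and the one place where Alexandroff-ness is genuinely used---is that the $\Top$-final topology of this diagram is exactly the topology of $X$: a set $U$ is final-open iff $U\cap E$ is open in every finite subspace $E$, and testing this on the two-point subspaces $E=\{x,y\}$ shows that $U$ is final-open iff it is an up-set of $X$, i.e.\ iff it is open in $X$. (This reconstruction fails in $\Top$, which is precisely why there finite generation forces discreteness rather than mere finiteness.) Hence the colimit of the diagram is $X$ itself, and the identity $\id_X\colon X\to X$ is a continuous map into the colimit whose image is all of $X$; since $X$ is infinite, every finite subspace is proper, so $\id_X$ cannot factor through any of them. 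Thus no infinite object is finitely generated. Combining the two inclusions yields the finite spaces as the finitely generated objects of $\mathsf{Alex}$, and the finite T$_0$-spaces (equivalently, finite posets) as those of $\mathsf{Alex}_0$.

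I expect the main obstacle to be exactly the verification in the converse that this diagram of finite subspaces has colimit $X$, and not some strictly finer space; everything else is bookkeeping in $\PreOrd$. It is worth isolating the two-point test as a lemma, as it is precisely the feature distinguishing $\mathsf{Alex}$ from $\Top$ and is what makes ``finitely generated'' collapse to ``finite'' here.
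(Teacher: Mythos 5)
Your proof is correct and takes essentially the same route as the paper's: both pass through the identification of $\mathsf{Alex}$ with the category $\PreOrd$ of preordered sets and of $\mathsf{Alex}_0$ with $\mathsf{Pos}$ via the specialization order. The only difference is that the paper simply cites \cite{AR1994}, 1.2(3), for the fact that the finitely generated posets (and preorders) are exactly the finite ones, whereas you spell out that standard argument in full --- including the verification that directed colimits of monomorphisms in $\mathsf{Alex}$ are computed as in $\Top$ and the two-point-subspace test --- which is precisely the ``straightforward proof'' the paper alludes to.
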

\begin{proof}
	The category $\mathsf{Alex}$ is the image of the full coreflective embedding $A:\mathsf{Ord}\to\Top$ which provides a preordered set with the topology of down-closed sets as opens. Its right adjoint provides a topological space with the (dual of the) so-called specialization preorder. The image of the restriction of $A$ to the category $\mathsf{Pos}$ of partially ordered sets is the category $\mathsf{Alex}_0$. In $\mathsf{Pos}$ the finitely generated objects are precisely the finite posets (see \cite{AR1994}, 1.2(3)). The straightforward proof of this fact works more generally for preordered sets.
	\qed
\end{proof}

\subsection{\small Are there non-trivial  locally presentable subcategories of $\Top$?}

The short answer is affirmative: Theorem 3.4 of \cite{FajstrupRosicky2008} confirms the existence of a non-trivial {\it locally presentable} (see  %
\cite{GU1971, MP1989, Borceux1994, AR1994}) full subcategory, not only in the category $\Top$, but in any fibre-small concrete topological category $\mathcal K$; the authors use model-theoretic methods to show, that for every (small) set
 $\mathcal G$ 
 of objects in such a category, the full subcategory of those objects $X$ which are the target of a $\mathcal K$-final sink of arrows with domain in $\mathcal G$, is not just coreflective (see \cite{AHS1990}), but locally presentable. For instance, in $\Top$ the Alexandroff-discrete spaces come about in this  way when one takes for $\mathcal G$ all (up to homeomorphism) finite spaces.

However, with the characterization of their $\lambda$-presentable (or at least finitely presentable) objects for the familiar categories of spaces considered in this paper, it of course follows, that they cannot be locally $\lambda$-presentable (or finitely presentable, respectively). But there is often an easier way of arguing, without referral to such a characterization. Indeed, since local presentability of a category entails the existence of a strong generator (so that every object must be the codomain of a strong epimorphism whose domain is a coproduct of objects of the generator), many familiar subcategories of $\Top$ are easily seen not to be locally presentable, not even {\it nearly locally presentable} in the sense of \cite{PR2018}. Here is a (probably known) non-existence proof for a strong generator for some of the categories considered in this paper.

\begin{proposition}\label{HusekProposition}
  None of the categories $\Top$ or $\Top_i$ for $i=0,1,2$, has a strong 
    generator.
 \end{proposition}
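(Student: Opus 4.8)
The plan is to prove that none of the categories $\Top$ or $\Top_i\;(i=0,1,2)$ admits a strong generator by exhibiting, for any proposed small generating set $\mathcal G$, an object that cannot be a strong-epimorphic quotient of a coproduct of copies of objects from $\mathcal G$. Recall that a strong generator is a (small) set $\mathcal G$ of objects such that every object $Y$ receives a strong epimorphism $\coprod_{i\in I} G_i \twoheadrightarrow Y$ with each $G_i\in\mathcal G$. In $\Top$, coproducts are topological disjoint unions, and the strong epimorphisms are precisely the quotient maps (onto surjections with the final topology). So the concrete task is to produce a space $Y$ that cannot be a quotient of any disjoint union $\coprod_{i} G_i$ drawn from a fixed set $\mathcal G$.

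The key cardinality obstruction I would exploit is that quotient maps cannot increase certain local-weight invariants, but they can be defeated by a sufficiently large indiscrete-type or cofinite-type space. First I would fix an arbitrary small set $\mathcal G$ and let $\kappa$ be a regular cardinal strictly larger than the cardinality (and the cardinality of the topology) of every member of $\mathcal G$. The idea is that a disjoint union $\coprod_i G_i$ inherits its \emph{local} structure componentwise: each point and its neighborhood filter live inside a single summand $G_i\in\mathcal G$, so every point of $\coprod_i G_i$ has a neighborhood base of size bounded by the character of the members of $\mathcal G$, hence below $\kappa$. A quotient map $e:\coprod_i G_i\to Y$ then constrains $Y$: if $e$ is a quotient and $y\in Y$, the neighborhood filter of $y$ is determined by the (at most) small fibers over $y$ together with the componentwise-small neighborhood bases, so one expects the character of $Y$ at each point, or some related invariant, to stay below $\kappa$. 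I would then take $Y$ to be a cofinite (coarse $\mathrm T_1$) space $X_{\mathrm{cof}}$ of cardinality $\geq\kappa$; such a space is Hausdorff-free but lies in all four categories $\Top,\Top_0,\Top_1$ (and for $\Haus$ one uses instead a large one-point compactification or a large convergent-net space), and at its generic behaviour every non-empty open set is cofinite, forcing a point to have character at least $\kappa$. This gives the contradiction, since no quotient of $\coprod_i G_i$ can realize such a large local invariant.

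The main obstacle I anticipate is making the local-invariant argument precise and uniform across all four categories simultaneously, because the cleanest invariant differs slightly from category to category. For $\Top$, $\Top_0$, $\Top_1$ a cofinite space of large cardinality works directly, since in $X_{\mathrm{cof}}$ every point has character equal to $|X|$: its neighborhood filter is the cofinite filter, which has no base of size $<|X|$. A quotient map out of $\coprod_i G_i$ cannot produce a point of character $\geq\kappa$ because, tracing a neighborhood base of the chosen point back through the fibers, one assembles a base from the componentwise bases of the at-most-$\kappa$-many summands hit, each of size $<\kappa$, and regularity of $\kappa$ keeps the total below $\kappa$. The careful part is the quotient bookkeeping: a basic neighborhood of $y\in Y$ corresponds to a \emph{saturated} open set upstairs, and I must argue that saturated neighborhoods can be taken from a small family. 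The delicate case is $\Haus$, where $X_{\mathrm{cof}}$ is not available; there I would instead use a large Fr\'echet--Urysohn-style space, for example a discrete set of size $\kappa$ with a single adjoined limit point whose neighborhood filter is a uniform ultrafilter-like filter of large character, which lies in $\Haus$ and still forces character $\geq\kappa$ at the limit point.

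Once the character bound is established, the conclusion is immediate: for the fixed $\mathcal G$, no coproduct of its members can quotient onto the large-character space $Y$ that I have exhibited in the relevant category, so $\mathcal G$ fails to be a strong generator. Since $\mathcal G$ was an arbitrary small set, none of $\Top,\Top_0,\Top_1,\Haus$ possesses a strong generator, which is the desired statement. I would organize the write-up by first proving the componentwise character bound for coproducts and its preservation (downward) under quotient maps as a lemma, then instantiating the large-character witness space separately for the $\mathrm T_1$-type categories and for $\Haus$, keeping the cardinal arithmetic (regularity of $\kappa$ chosen above all relevant invariants of $\mathcal G$) as the single quantitative input.
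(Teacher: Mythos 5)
Your overall strategy (fix an arbitrary small $\mathcal G$, pick a cardinal $\kappa$ above all its members, exhibit one space that cannot carry the final topology with respect to maps from $\mathcal G$) is the right skeleton, and it is also the paper's; but the invariant you base everything on does not behave as you claim. Your key lemma --- that a quotient of a coproduct of spaces of character $<\kappa$ again has character $<\kappa$ --- is false: quotient maps in $\Top$ can increase character without bound, because the fiber over a point may meet arbitrarily many summands (the coproduct in the definition of a strong generator is indexed by \emph{all} morphisms $G\to Y$ with $G\in\mathcal G$, so its index set is not bounded by $\kappa$), and hence there is no small family of saturated neighborhoods to assemble. The sequential fan (collapse the limit points of $\kappa$ disjoint copies of $\omega+1$ to a single point) is the standard counterexample. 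Worse, your chosen witness itself falls to this: the cofinite space $X_{\mathrm{cof}}$ on a set of size $\kappa$ is \emph{sequential} for every $\kappa$ --- any infinite subset contains an injective sequence, and an injective sequence in $X_{\mathrm{cof}}$ converges to every point, so every sequentially closed set is closed --- hence $X_{\mathrm{cof}}$ \emph{is} a quotient of a coproduct of copies of the convergent sequence $\omega+1$, even though its character at every point is $\kappa$. So against the candidate set $\mathcal G=\{\omega+1\}$ your argument proves nothing: character is simply not an invariant that quotients of coproducts respect, and no cardinal bookkeeping can repair this.

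What does work, and is what the paper does, is to change both the witness and the argument: take a discrete set $Y$ of cardinality greater than that of every member of $\mathcal G$, add one point $t$ whose open neighborhoods are the sets $\{t\}\cup(Y\setminus Z)$ with $\mathrm{card}\,Z<\mathrm{card}\,Y$, and argue about \emph{images} rather than character. Any continuous $f:X\to Y^*$ with $X\in\mathcal G$ has $|f[X]|<|Y|$, so $f^{-1}(t)$ coincides with the preimage of the open set $\{t\}\cup\bigl(Y\setminus(f[X]\setminus\{t\})\bigr)$ and is therefore open in $X$, while $\{t\}$ is not open in $Y^*$; hence the sink of all maps from $\mathcal G$ into $Y^*$ is not final, and $\mathcal G$ is not a strong generator. (Note that $X_{\mathrm{cof}}$ fails this test: preimages of points under maps from $\omega+1$ need not be open --- exactly where sequentiality sneaks in.) Since $Y^*$ is regular and T$_1$, this single witness disposes of $\Top$, $\Top_0$, $\Top_1$ and $\Haus$ simultaneously; no separate construction for $\Haus$ is needed. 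Your fallback space for $\Haus$ (a discrete set with one adjoined point carrying a uniform filter of ``large character'') is essentially this very space, and it does work for all four categories --- but only via the image argument just described, not via the character bound, which is irreparably broken.
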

 
 \begin{proof} A set $\mathcal G$ of objects is a strong generator in $\Top$ if, and only if, for every 
    object $Y$ the sink of morphisms with domain in $\mathcal G$ and codomain $Y$ is final; that is:
    a subset of $Y$ is open if its preimage under every continuous map $X\to Y$ with $X\in\mathcal G$ is open in $X$. 
    
    Assuming the existence of such a set $\mathcal G$, we give a space $Y^*$ violating the finality condition: for a discrete space $Y$ of cardinality greater than the cardinality of every space in $\mathcal G$, topologize $Y^*= Y\cup \{t\},\, t\notin Y$, by taking as
   open neighbourhoods of $t$ precisely the sets $\{t\}\cup(Y\setminus Z)$ with $Z\subseteq Y,\, \mathrm{card} Z < \mathrm{card} Y$. 
   Then,  
    for every continuous map $f: X \to Y^*$
    with $X\in \mathcal G$, the preimage of $t$ is open: it is either empty, or equal to 
    the preimage of the open set $\{t\} \cup (Y\setminus (f[X]\setminus\{t\}))$, whence open in $X$ in either case, although $\{t\}$ is not open in $Y^*$.

   This concludes the non-existence proof for a strong generator in $\Top$.
   Concerning its subcategories $\Top_i,\, i=0,1,2$, it suffices to note that the space $Y^*$, as a regular T$_1$ space, actually lies in the category $\Top_i$, in which monomorphisms are characterized as in $\Top$, so that one may argue for the non-existence of a strong generator just as in the case of $\Top$.
 \qed   
       \end{proof}
    
    \medskip
 Note that the one-point space is a strong generator of the monadic category of compact Hausdorff spaces over $\Set$ but, not being ranked, this category nevertheless fails to be locally presentable (see \cite{GU1971}). A more general reason for this failure is the local presentability of its dual category, which is equivalent to the category of commutative C$^*$-algebras (for their universal-algebraic description, see \cite{WPR1993}); indeed, the dual of a locally presentable category is never locally presentable, unless it is equivalent to a preorder (\cite{GU1971}, Satz 7.13). 
 
 This fact has encouraged the search for further reflective subcategories of $\Top$ whose dual is locally presentable, or at least nearly locally presentable. For example, assuming the Vop$\check{\mathrm e}$nka Principle (VP), Hu$\check{\text{s}}$ek and Rosick\'{y} \cite{HR2018} show that the dual of the category of real-compact spaces is locally presentable; in the absence of the very strong set-theoretical hypothesis (VP), they also prove that this dual category is nearly locally representable if, and only if, measurable cardinals exist.
 
 The notion of {\it locally ranked} category (as used  in \cite{AHRT2002} to prove a generalized version of Quillen's `Small Object Argument') does not require the presence of a strong generator, and it therefore makes it easier for a full subcategory of $\Top$ to fit into this generalization of the notion of locally presentable category.

\bigskip

\noindent J. Ad\'{a}mek\newline
Czech Technical University\newline
Prague, Czech Republic\newline
and\newline
Technical University Braunschweig\newline
Braunschweig, Germany\newline
j.adamek@tu-bs.de

\medskip
\noindent M. Hu\v{s}ek\newline
Faculty of Mathematics and Physics\newline
Charles University\newline
Prague, Czech Republic\newline
mhusek@karlin.mff.cuni.cz

\medskip
\noindent J. Rosick\'{y}\newline
Department of Mathematics and Statistics,\newline
Masaryk University, Faculty of Sciences,\newline
Kotl\'{a}\v{r}sk\'{a} 2, 611 37 Brno,
Czech Republic\newline
rosicky@math.muni.cz

\medskip
\noindent W. Tholen\newline
Department of Mathematics and Statistics\newline
York University\newline
Toronto ON, M3J 1P3, Canada\newline
tholen@yorku.ca

\end{document}